\def\Box{\vcenter{\vbox{\hrule\hbox{\vrule
     \vbox to 8.8pt{\hbox to 10pt{}\vfill}\vrule}\hrule}}}
\newcommand{\tr}{\textup{Tr}}
\newcommand{\ra}{\rangle}
\newcommand{\la}{\langle}
\newcommand{\F}{{\mathbb F}}
\newcommand{\cC}{{\mathcal C}}
\newcommand{\cH}{{\mathcal H}}
\newcommand{\cS}{{\mathcal S}}
\newcommand{\cN}{{\mathcal N}}
\newcommand{\Aut}{\textup{Aut}}
\newcommand{\rank}{\textup{rank}}
\newcommand{\rad}{\textup{rad}}
\newcommand{\PG}{\textup{PG}}
\newtheorem{thm}{Theorem}
\newtheorem{lemma}[thm]{Lemma}
\newtheorem{corollary}[thm]{Corollary}
\newtheorem{example}[thm]{Example}
\numberwithin{equation}{section}
\newtheorem{remark}[thm]{Remark}
\numberwithin{thm}{section}
\begin{document}
\title[flag-transitive affine planes]{Finite flag-transitive affine planes with a solvable automorphism group}

\author{Tao Feng}
\address{School of Mathematical Sciences, Zhejiang University, Hangzhou 310027, Zhejiang, China}
\email{tfeng@zju.edu.cn}

\keywords{spread, affine plane, planar function, commutative semifield, quadratic form, permutation polynomial.}
\begin{abstract}
In this paper, we consider finite flag-transitive affine planes with a solvable automorphism group. Under a mild number-theoretic condition involving the order and dimension of the plane, the translation complement must contain a linear cyclic subgroup that either is transitive or has two equal-sized orbits on the line at infinity. We develop a new approach to the study of such planes by associating them with planar functions and permutation polynomials in the odd order and even order case respectively. In the odd order case, we characterize the Kantor-Suetake family by using Menichetti's classification of generalized twisted fields and Blokhuis, Lavrauw and Ball's classifcation of rank two commutative semifields. In the even order case, we develop  a technique to study permutation polynomials of DO type by quadratic forms and characterize such planes that have dimensions up to four over their kernels.
\end{abstract}

\maketitle

\section{Introduction}

Let $V$ be a $2n$-dimensional vector space over the finite field $\F_q$. A {\it spread} $\cS$ of $V$ is a collection  of $n$-dimensional subspaces that partitions the nonzero vectors in $V$. The members of $\cS$ are the {\it components}, and $V$ is the {\it ambient space}. The {\it kernel} is the subring of $\Gamma L(V)$ that fixes each component, and it is a finite field containing $\F_q$. The {\it dimension} of $\cS$ is the common value of the dimensions of its components over the kernel. The {\it automorphism group} $\Aut(\cS)$  is the subgroup of $\Gamma L(V)$ that maps components to components. The incidence structure $\Pi(\cS)$ with point set $V$ and line set $\{W+v:\,W\in\cS,\,v\in V\}$ and incidence being inclusion is a translation plane. The kernel or dimension of $\Pi(\cS)$ is that of  $\cS$ respectively. Andre \cite{andre_tr} has shown that $\Aut(\cS)$ is the translation complement of the plane $\Pi(\cS)$  and each finite translation plane can be obtained from a spread in this way. Two spreads $\cS$ and $\cS'$ of $V$ are {\it isomorphic} if $\cS'=\{g(W):\,W\in\cS\}$ for some $g\in\Gamma L(V)$, and isomorphic spreads correspond to isomorphic planes.

An affine plane is called {\it flag-transitive} if it admits a collineation group which acts transitively on the flags, namely, the incident point-line pairs. Throughout this paper, we will only consider finite planes. Wagner \cite{wagner} has shown that finite flag-transitive planes are necessarily translation planes, so the plane must have prime power order and can be constructed from a spread $\cS$ with ambient space $V$ of dimension $2n$ over $\F_q$ for some $n$ and $q$. The affine plane $\Pi(\cS)$ constructed from a spread $\cS$ is flag-transitive if and only if $\Aut(\cS)$ is transitive on the components.  Foulser has determined all flag-transitive groups of finite affine planes in \cite{foulser,foulser0}. The only non-Desarguesian flag-transitive affine planes with nonsolvable collineation groups are the nearfield planes of order $9$, the Hering plane of order $27$ \cite{hering}, and the L$\ddot{u}$neburg planes of even order \cite{luneburg}, cf. \cite{ls,kantor_h}. In the solvable case, Foulser has shown that with a finite number of exceptions, which are explicitly described, a solvable flag transitive group of a finite affine plane is a subgroup of a one-dimensional Desarguesian affine plane.

Kantor and Suetake have constructed non-Desuarguesian flag-transitive affine planes of odd order in \cite{kantor_odd,ks,suetake, suetake2}, and we will refer to these planes as  the Kantor-Suetake family. The dimension two case is also due to Baker and Ebert \cite{be_const}. Kantor and Williams have constructed large numbers of flag-transitive affine planes of even order  arising from symplectic spreads in \cite{kantor_even,kwnew}. The dimensions of these planes over their kernels are odd. It remains open whether there is a non-Desarguesian flag-transitive affine plane of even order whose dimension over its kernel is even and greater than $2$. Prince has completed the determination of all the flag-transitive affine planes of order at most $125$ in \cite{prince}, and there are only the known ones.

Except for the L$\ddot{u}$neburg planes  and  the Hering plane of order $27$, all the known finite non-Desarguesian  flag-transitive affine planes have a translation complement which contains a linear cyclic subgroup that either is transitive or has two equal-sized orbits on the line at infinity.  Under a mild number-theoretic condition involving the order and dimension of the plane (see Lemma \ref{lemma_gcd} below), it can be shown that one of these actions must occur. We call flag-transitive planes of the first kind {\it $\mathcal{C}$-planes} and those of the second kind {\it $\mathcal{H}$-planes}, and call the corresponding spreads of {\it type $\mathcal{C}$} and {\it type $\mathcal{H}$} respectively.  There has been extensive study on these two types of planes in the literature. In the case the plane has odd order and dimension two or three over its kernel, it has been shown that the known examples are the only possibilities for either of these two types, see \cite{be_last,be_baer2,be_baer,be_2dim,be_x,ebert_sur}. The classification takes a geometric approach by making use of the intersection of the corresponding spread with the orbits of a certain Singer subgroup and considering relevant Baer subgeometry partitions.

In the study of finite flag-transitive projective planes, deep results from finite group theory are invoked and considerable progress has been made towards a complete classification, cf. \cite{kantor_proj,Kthas_proj, tz_proj}. In contrast, the affine case is more of a combinatorial flavor, and a complete classification seems far out of reach. In this paper, we show that there is a broader connection between flag-transitive affine planes and other combinatorial objects than that is previously known. This will lead us to new characterization results on such planes by making use of the deep results already obtained in other circumstances. To be specific, in Section 3 we develop a new approach to the study of such planes by associating them with planar functions and permutation polynomials in the odd order and even order case respectively. In the odd order case, this new approach allows us to characterize the Kantor-Suetake family by making use of Menichetti's classification of generalized twisted fields in \cite{meni1,meni2}. In particular, the cases of dimension two and three over their kernels follow as a consequence. In Section 4 we will consider the nuclei of the associated commutative semifields and study planar functions that correspond to rank two commutative semifields by the classification results of such semifields by Blokhuis, Lavrauw and Ball in \cite{rtcs_char1,rtcs_char2}. In the even order case, we will develop  a technique to study permutation polynomials of DO type by quadratic forms and characterize such planes that have dimensions up to four over their kernels in Section 5. This is the first characterization result in the even order case to our knowledge.

\section{Preliminaries}

A finite {\it presemifield} $S$ is a finite ring with no zero-divisors such that both the left and right distributive laws hold. If further it contains a multiplicative identity, then we call $S$ a {\it semifield}. The additive group of a finite presemifield is necessarily an elementary abelian $p$-group for some prime $p$, so it is conventional to identify $(S,+)$ as the additive group of the finite field $\F_q$ with $q=|S|$ elements. Two presemifields $S_1=(\F_q,+,\star)$ and $S_2=(\F_q,+,\ast)$ are {\it isotopic} if there exist three linear bijections $L,\,M,\,N$ from $(\F_q,+)$ to itself such that $M(x)\ast N(y)=L(x\star y)$ for all $x,y\in \F_q$. In this case, we call $S_1$ an {\it isotope} of $S_2$ and vice versa. For a presemifield $S=(\F_q,+,*)$, fix any $e\ne 0$ and define a new multiplication $\circ$ by $(x*e)\circ(e*y)=x*y$. Then $S'=(\F_q,+,\circ)$ is a semifield with multiplicative identity $e*e$ that is isotopic to $S$.

Let $S=(\F_q,+,*)$ be a commutative semifield. Its  {\it middle nucleus}   $\cN_m(S)$ and {\it nucleus} $\cN(S)$ are defined respectively as follows:
\begin{align*}
\cN_m(S)=\{\alpha\in \F_q:\,(x*\alpha)*y=x*(\alpha*y) \textup{ for all } x,y\},\\
\cN(S)=\{\alpha\in \F_q:\,(\alpha*x)*y=\alpha*(x*y) \textup{ for all } x,y\}.
\end{align*}
Both $\cN_m(S)$ and $\cN(S)$ are finite fields, and we can regard $S$ as a vector space over $\cN_m(S)$. For more details, please refer to \cite{handbook} or the surveys in \cite[Chapter 9]{handbookff} and \cite{fs_cur}.

A {\it rank two commutative semifield} (RTCS for short) is a commutative semifield that is of rank at most two over its middle nucleus.  A finite field is a RTCS by definition, and other known examples include: Dickson semifields \cite{dickson_r2}, Cohen-Ganley semifields \cite{CG} and the Penttila-Williams semifield \cite{rtcs_pw}. They  have close connections with many central objects in finite geometry, cf. \cite{rtcs_sur,fs_cur}. The following approach to RTCS is due to Cohen and Ganley \cite{CG}.
\begin{thm}\label{thm_rtcs_def}
Let $q$ be odd, and fix $t\in \F_{q^2}\setminus\F_q$. Let $f,\,g:\,\F_q\mapsto\F_q$ be two functions.  The algebraic system $S(g,f):=(\F_{q^2},+,\circ)$ with multiplication
\begin{equation}\label{eqn_rtcs_def}
(xt+y)\circ (ut+v)=(xv+yu+g(xu))t+yv+f(xu)
\end{equation}
is a RTCS if and only if $f,\,g$ are linear and $g(x)^2+4xf(x)$ is a nonsquare for  $x\ne 0$.
\end{thm}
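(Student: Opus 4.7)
The plan is to verify the defining conditions of an RTCS directly from the multiplication formula~\eqref{eqn_rtcs_def} and to observe that the resulting chain of equivalences yields both directions of the theorem simultaneously.

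I would first note that commutativity of $\circ$ is immediate because swapping $(x,y)$ with $(u,v)$ on the right-hand side of~\eqref{eqn_rtcs_def} leaves it unchanged. To analyze the distributive law, I would expand $((x_1+x_2)t+(y_1+y_2))\circ(ut+v)$ and compare coefficients of $t$ and $1$ with those of $(x_1t+y_1)\circ(ut+v)+(x_2t+y_2)\circ(ut+v)$. The linear terms match automatically, while equality of the remaining terms reduces, after setting $u=1$, to additivity of both $f$ and $g$. Thus distributivity holds in $S(g,f)$ if and only if $f$ and $g$ are $\F_p$-linear. Under this assumption $f(0)=g(0)=0$, whence $e=0\cdot t+1$ is a two-sided multiplicative identity, and a direct check shows that every $\alpha\in\F_q$ satisfies $(x\circ\alpha)\circ y=x\circ(\alpha\circ y)$ for all $x,y\in\F_{q^2}$. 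Consequently $\F_q\subseteq\cN_m(S(g,f))$, so the rank of $S(g,f)$ over its middle nucleus is at most two whenever the system is a semifield.

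Next I would characterize the absence of zero-divisors. Suppose $(xt+y)\circ(ut+v)=0$ with $xt+y\neq 0$. Using $f(0)=g(0)=0$, the degenerate cases $x=0$ or $u=0$ quickly force $ut+v=0$. Otherwise set $z=xu\neq 0$; the two coefficient equations become
\begin{equation*}
xv+yu+g(z)=0,\qquad yv+f(z)=0.
\end{equation*}
When $y=0$ these collapse to $f(z)=0$ and $v=-g(z)/x$, producing a zero-divisor precisely when $f(z)=0$. When $y\neq 0$, eliminating $v=-f(z)/y$ from the second equation and multiplying the first by $y$ yields the quadratic $uy^{2}+g(z)y-xf(z)=0$ in $y$, whose discriminant equals $g(z)^{2}+4uxf(z)=g(z)^{2}+4zf(z)$; this quadratic is solvable in $\F_q$ for some $u$ with $xu=z$ iff the discriminant is a square in $\F_q$. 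Combining the two subcases, and noting that $f(z)=0$ makes $g(z)^{2}+4zf(z)=g(z)^{2}$ automatically a square, the absence of zero-divisors is equivalent to $g(z)^{2}+4zf(z)$ being a nonsquare for every $z\neq 0$. Together with the previous paragraph, this chain of equivalences proves both directions of the theorem.

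The main obstacle is the zero-divisor analysis: the key insight is that eliminating $v$ from the bilinear system collapses it into a single quadratic in $y$ whose discriminant is exactly $g(z)^{2}+4zf(z)$, and that the apparently separate degenerate branch $y=0$ is already subsumed by the nonsquare condition. Once this reduction is identified, the rest of the argument is a routine expansion of~\eqref{eqn_rtcs_def}.
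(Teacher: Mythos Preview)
The paper does not prove this theorem; it is quoted from Cohen and Ganley \cite{CG} as background, so there is no in-paper argument to compare against. Your direct verification is correct and is essentially the standard approach: check commutativity and distributivity from the formula, identify $1\in\F_q$ as the identity, verify $\F_q\subseteq\cN_m$ by the computation $(xt+y)\circ\alpha=\alpha xt+\alpha y$ for $\alpha\in\F_q$, and reduce the no-zero-divisor condition to the quadratic $uy^{2}+g(z)y-xf(z)=0$ whose discriminant $g(z)^{2}+4zf(z)$ depends only on $z=xu$. Your observation that the degenerate branch $f(z)=0$ is absorbed by the square/nonsquare dichotomy is exactly the point that makes the equivalence clean. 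One small wording issue: the discriminant is already independent of the particular factorization $z=xu$, so the phrase ``for some $u$ with $xu=z$'' is superfluous---once the discriminant is a square and $f(z)\neq 0$, any choice of $x\neq 0$ (with $u=z/x$) produces a nonzero solution $y$ and hence a zero-divisor.
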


In this manner, the Dickson semifield can be described as $S(g,f)$ with $g(z)=0$ and $f(z)=mz^\sigma$, where $q$ is odd, $\sigma$ is an automorphism of $\F_q$, and $m$ is a nonsquare. It is clear that a different choice of $t$ in the theorem lead to an isotopic semifield. In the following theorem we collect some characterization results of Dickson semifields among all RTCSs.
\begin{thm}[\cite{rtcs_char1,rtcs_char2}]\label{thm_rtcs}
Let $S$ be a RTCS of order $p^{2n}$, $p$ an odd prime. If the nucleus $\cN(S)=\F_q$, and $p^n=q^s$, then $S$ is isotopic to either a finite field or a Dickson semifield if  $p>2n^2-(4-2\sqrt{3})n+(3-2\sqrt{3})$, $q\ge 4s^2-8s+2$ or $s=3$.
\end{thm}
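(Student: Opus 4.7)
The plan is to exploit the Cohen--Ganley parametrization of Theorem~\ref{thm_rtcs_def}. Since $S$ has middle nucleus containing $\F_q$ and order $p^{2n}=q^{2s}$, write $S\cong S(g,f)$ with $f,g\colon \F_q\to \F_q$ additive (hence $\F_p$-linear) and $g(x)^2+4xf(x)$ a nonsquare in $\F_q$ for every $x\ne 0$. The condition $\cN(S)=\F_q$ should be translated into a statement about the linearized polynomials $f(z)=\sum a_i z^{p^i}$ and $g(z)=\sum b_i z^{p^i}$, forcing that no proper subfield of $\F_q$ containing $\F_p$ makes both $f$ and $g$ semilinear. The finite field case corresponds (up to isotopy) to $g=0$, $f(z)=mz$, and the Dickson case corresponds to $g=0$, $f(z)=mz^\sigma$ with $m$ a nonsquare and $\sigma$ a nontrivial Frobenius; so the goal is to show $g\equiv 0$ and $f$ is a monomial of Frobenius type.

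The key object is the two-variable polynomial
\[
\Phi(x,y)=g(x)^2+4xf(x)-y^2\in \F_q[x,y].
\]
The nonsquare hypothesis says that $\Phi(x,y)=0$ has no solutions with $x\ne 0$ over $\F_q$. The idea is to regard this equation as defining an affine curve $\cC$ in $\mathbb{A}^2_{\F_q}$, bound its geometric genus in terms of $\deg f$ and $\deg g$ (so in terms of $n$ and $s$), and then apply the Hasse--Weil bound: if $\cC$ is absolutely irreducible of genus $\mathfrak{g}$, then it has at least $q-2\mathfrak{g}\sqrt{q}-c$ affine $\F_q$-points, which contradicts the hypothesis once $q$ is large enough compared with $n$ (giving the explicit bound $p>2n^2-(4-2\sqrt{3})n+(3-2\sqrt{3})$ and $q\ge 4s^2-8s+2$). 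The conclusion is that $\Phi$ must factor or degenerate; a careful analysis of how a linearized polynomial $g(x)^2+4xf(x)$ can fail to be absolutely irreducible as $y^2-(\cdots)$ then forces $g=0$ and $f$ to be a single Frobenius monomial.

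I would carry this out in three steps. First, translate the nucleus condition into an arithmetic constraint on the supports of $f$ and $g$ as linearized polynomials over $\F_q$, so that the relevant exponents lie in $\{1,p,\ldots,p^{n-1}\}$ with ``full'' $\F_q$-support. Second, run the Hasse--Weil argument on $\Phi$; the genus bound will be roughly $\mathfrak{g}\le \tfrac{1}{2}(\deg f+\deg g-1)(\deg f+\deg g-2)$, so imposing the nonsquare condition globally forces $\Phi$ to be reducible, and the irreducible factors must themselves be linearized in a compatible way. Third, classify the degenerate shapes of $\Phi$: up to the equivalence given by the choice of $t\in \F_{q^2}\setminus \F_q$ in Theorem~\ref{thm_rtcs_def}, only the finite field and Dickson forms survive. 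The case $s=3$ does not fall within the asymptotic bound and must be handled separately, presumably by a direct and rather explicit analysis of linearized polynomials of degree $\le p^2$ over $\F_{q^3}$, together with the fact that a cubic extension leaves only a few possible Frobenius exponents.

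The main obstacle is the second step: establishing the absolute irreducibility (or carefully controlled reducibility) of $\Phi(x,y)$, and then upgrading the Hasse--Weil count into a structural conclusion about $f$ and $g$. The delicate point is that both $\deg f$ and $\deg g$ can be as large as $q^{s-1}$, so the naive genus estimate is far too weak; one needs to use the very special linearized structure to replace the generic bound by one polynomial in $n$ rather than in $q$. The separate treatment of $s=3$ (where the asymptotic argument breaks down) is the second source of difficulty, and is where one most clearly sees why the statement allows the alternative hypothesis $s=3$ in addition to the two inequality conditions.
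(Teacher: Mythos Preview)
The paper does not contain a proof of Theorem~\ref{thm_rtcs}. It is stated with a citation to \cite{rtcs_char1,rtcs_char2} and used as a black box in Section~4; there is nothing in the paper to compare your proposal against.

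That said, a word about your plan relative to what actually happens in those references. Your instinct to pass through the Cohen--Ganley pair $(f,g)$ and the nonsquare condition on $g(x)^2+4xf(x)$ is the right starting point. But the obstacle you flag in your second step is fatal for the approach as written: since $f,g$ are $\F_p$-linearized on $\F_q=\F_{p^n}$, the curve $y^2=g(x)^2+4xf(x)$ has degree on the order of $p^{n-1}$, and the Hasse--Weil bound gives nothing useful until $p$ is astronomically large, far beyond the stated thresholds. You cannot repair this simply by ``using the linearized structure'' to shrink the genus; there is no general mechanism that turns a degree-$p^{n-1}$ linearized polynomial into a curve of genus polynomial in $n$. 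The proofs in \cite{rtcs_char1,rtcs_char2} avoid curves entirely. They reinterpret the nonsquare condition on $g(x)^2+4xf(x)$ as saying that the image of an $\F_p$-linear (resp.\ $\F_q$-linear) map into the affine plane lies in the set of internal points of a conic, and then invoke the R\'edei--Sz\H{o}nyi--Ball machinery on directions determined by an affine point set (equivalently, lacunary polynomials). That machinery is precisely what converts the additive structure of $f,g$ into a bound polynomial in $n$ (or in $s$), yielding the inequalities in the statement; the case $s=3$ in \cite{rtcs_char2} is handled by a refinement of the same method, not by a separate ad hoc computation.

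So your proposal, as it stands, does not go through: step~2 collapses for exactly the reason you anticipate, and the replacement argument is of a genuinely different nature (blocking-set/direction theory rather than algebraic-curve point counting).
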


A polynomial $f(X)\in\F_{q}[X]$ is {\it reduced} if $\deg(f)\le q-1$. It is well known that any function $f:\,\F_q\mapsto\F_q$ is uniquely representable by a reduced polynomial in $\F_q[X]$, i.e., there exists a unique reduced polynomial $g(X)\in\F_q[X]$ such that $f(x)=g(x)$ for all $x\in\F_q$. We will  write $f(X)$ and $f(x)$  when we regard $f$ as a polynomial and a function respectively. A polynomial of the form $L(X)=\sum_{i=0}^{m}a_iX^{q^i}$ with coefficients $a_i$'s in $\F_{q^n}$ is called a {\it $q$-polynomial} over $\F_{q^n}$. If $q$ is not specified in the context, then it is also called a {\it linearized polynomial}. If $L(X)$ is a reduced linearized polynomial over $\F_{q^n}$, then the map $x\mapsto L(x)$ is $\F_q$-linear if and only $L(X)$ is a $q$-polynomial. If  a subspace $U$ is in the kernel of $L$, then $L_U(X)=\prod_{a\in U}(X-a)$ is a linearized polynomial, and there is a linearized polynomial $R$ such that $L(X)=R(L_U(X))$, cf. \cite[Theorem 3.52, Theorem 3.62]{ff}. For a $q$-polynomial $L(X)=\sum_{i=0}^{n-1}d_iX^{q^i}$, define its {\it adjoint} polynomial as $\tilde{L}(X)=\sum_{i=0}^{n-1}d_i^{q^{n-i}}X^{q^{n-i}}$, and  its associated matrix as
\begin{equation}\label{eqn_mat}
M:=\begin{pmatrix}
d_0& d_1  & \cdots &d_{n-1} \\
d_{n-1}^q&d_0^q  &\cdots &d_{n-2}^q \\
\vdots&\vdots&\ddots& \vdots\\
d_{1}^{q^{n-1}} &d_2^{q^{n-1}}   &\cdots&d_0^{q^{n-1}}
\end{pmatrix}.
\end{equation}
The following lemma is well-known, and we sketch a proof.
\begin{lemma}\label{lemma_M_rank} If $L(X)=\sum_{i=0}^{n-1}d_iX^{q^i}\in\F_{q^n}[X]$, then the kernel of the $\F_q$-linear map $x\mapsto L(x)$ from $\F_{q^n}$ to itself has dimension  $n-\rank(M)$, where $M$ is as defined in \eqref{eqn_mat}.
\end{lemma}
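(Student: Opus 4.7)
The plan is to reduce the statement to an equality of matrix ranks via rank-nullity, and then to recognise the Dickson-type matrix $M$ as being conjugate over $\F_{q^n}$ to an ordinary $\F_q$-matrix representation of $L$. Since $L$ acts $\F_q$-linearly on the $n$-dimensional $\F_q$-space $\F_{q^n}$, rank-nullity reduces the lemma to showing $\rank_{\F_{q^n}}(M) = \rank_{\F_q}(L)$.

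I would fix an arbitrary $\F_q$-basis $\{e_0, \dots, e_{n-1}\}$ of $\F_{q^n}$ and let $A = (A_{\ell j}) \in M_n(\F_q)$ be the matrix of $L$ in this basis, so that $L(e_j) = \sum_\ell A_{\ell j} e_\ell$ and $\rank(A) = \rank_{\F_q}(L)$. To bring $M$ into the picture, introduce the Moore matrix $P := (e_j^{q^i})_{0 \le i,j \le n-1} \in M_n(\F_{q^n})$, which is invertible because $\{e_0,\dots,e_{n-1}\}$ is $\F_q$-linearly independent.

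The main computation is the identity $MP = PA$. Expanding $(L(e_j))^{q^i}$ in two different ways gives
\begin{equation*}
(PA)_{ij} = \sum_\ell A_{\ell j}\, e_\ell^{q^i} = (L(e_j))^{q^i} = \sum_k d_k^{q^i} e_j^{q^{k+i}} = \sum_\ell d_{\ell - i \bmod n}^{q^i} e_j^{q^\ell} = (MP)_{ij},
\end{equation*}
where the first equality uses $A_{\ell j} \in \F_q$ and the penultimate one uses $e_j^{q^n} = e_j$ together with the substitution $\ell = (k+i) \bmod n$. Hence $M = PAP^{-1}$, and therefore $\rank_{\F_{q^n}}(M) = \rank_{\F_{q^n}}(A) = \rank_{\F_q}(A) = \rank_{\F_q}(L)$, the middle equality reflecting that the rank of a matrix with entries in $\F_q$ is preserved under extension of scalars.

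The only subtle point is the index bookkeeping in verifying $MP = PA$, namely arranging the sum on the right-hand side so that the cyclic identification $e_j^{q^n} = e_j$ produces exactly the cyclic shift of indices built into the definition of $M$; the remainder is standard linear algebra.
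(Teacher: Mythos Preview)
Your proof is correct and follows essentially the same approach as the paper: both introduce the Moore matrix $P=T=(e_j^{q^i})$ and use the key identity $(MP)_{ij}=L(e_j)^{q^i}$ to relate $M$ to the $\F_q$-linear action of $L$. The only cosmetic difference is that the paper finishes by multiplying on the left by $T^t$ (obtaining the $\F_q$-valued matrix $(\tr(t_iL(t_j)))$ and comparing null spaces), whereas you finish by multiplying on the right by $P^{-1}$ to exhibit the similarity $M=PAP^{-1}$ directly; both conclusions are immediate from the same computation.
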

\begin{proof}Take $\{t_0,t_1,\cdots,t_{n-1}\}$ to be a basis of $\F_{q^n}$ over $\F_q$, and let $T$ be the $n\times n$ matrix whose $(i,j)$-th entry is $t_j^{q^i}$ for $0\le i,j\le n-1$. By \cite[Lemma 3.51]{ff}, $T$ is invertible. It is routine to check that
$(T^tMT)_{i,j}=\tr_{\F_{q^n}/\F_q}(t_iL(t_j))$ and $(MT{\bf x}^t)_i=L(x)^{q^i}$,
where ${\bf x}=(x_0,x_1,\cdots,x_{n-1})\in\F_q^n$ and $x=x_0t_0+x_1t_1+\cdots +x_{n-1}t_{n-1}\in \F_{q^n}$. It follows that $L(x)=0$ if and only if ${\bf x}^t$ is in the null space of $T^tMT$. The matrix $T^tMT$ has all its entries in $\F_q$, so its rank over $\F_q$ and $\F_{q^n}$ are the same. The conclusion now follows.
\end{proof}

A function $f:\,\F_{q}\mapsto\F_q$ is a {\it planar function} if $x\mapsto f(x+a)-f(x)-f(a)$ is a permutation of $\F_q$ for any $a\ne 0$. It is known that there are no planar functions in even characteristic. A Dembowski-Ostrom (or DO) polynomial over a field of characteristic $p$ is a polynomial of the shape $\sum_{i,j}a_{ij}X^{p^i+p^j}$. If a function $f:\,\F_q\mapsto\F_q$ is representable by a DO polynomial, we call $f$ a function of DO type. Two planar functions $f,g$ of DO type are {\it equivalent} if there exist linearized polynomials $L_1,\,L_2$ such that $f(x)=L_1(g(L_2(x)))$ for all $x\in\F_q$. It is shown in \cite{coulter} that there is a close connection between commutative presemifield and planar functions of DO type: if $(\F_q,+,*)$ is a commutative presemifield, then $x\mapsto x*x$ is a planar function of DO type over $\F_q$; conversely, if $f$ is a planar function of DO type, then $S_f=(\F_q,+,*)$ is a commutative presemifield, where the multiplication is  $x*y=f(x+y)-f(x)-f(y)$. Weng and Hu have given a characterization of planar functions in terms of the images of $f$ in \cite[Theorem 2.3]{weng}.
\begin{lemma}\label{lemma_2to1}
Let $f:\,\F_q\mapsto\F_q$ be a DO polynomial. Then $f$ is a planar function if and only if $f$ is $2$-to-$1$, namely, every nonzero element has $0$ or $2$ preimages.
\end{lemma}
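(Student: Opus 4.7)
The plan is to combine the Dembowski--Ostrom structure of $f$ with a single double-counting identity linking the fibre sizes $N(c):=|f^{-1}(c)|$ to the additive difference operators.

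First I would set up the algebraic framework. Since planar functions force $q$ to be odd, every monomial $X^{p^i+p^j}$ in $f$ has even exponent, so $f$ is an even function with $f(0)=0$. The symmetric $\F_p$-bilinear form $B(x,y):=f(x+y)-f(x)-f(y)$ satisfies $B(x,x)=f(2x)-2f(x)=2f(x)$, because $(2x)^{p^i+p^j}=4x^{p^i+p^j}$ in odd characteristic. This yields the crucial identity $B(-a/2,a)=-B(a,a)/2=-f(a)$, so $x=-a/2$ is always a solution of $f(x+a)=f(x)$. Let $P(a):=|\{x:f(x+a)=f(x)\}|$ for $a\ne 0$. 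The $\F_p$-linearity of $B(\cdot,a)$ makes the solution set a coset of $\ker B(\cdot,a)$, whence $P(a)=|\ker B(\cdot,a)|\ge 1$; and by definition $f$ is planar precisely when $P(a)=1$ for every $a\ne 0$. Partitioning the ordered pairs $(x,y)$ with $x\ne y$ and $f(x)=f(y)$ by $a:=y-x$ yields the master identity
\[
\sum_{c}N(c)^2-q=\sum_{a\ne 0}P(a).
\]

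For the ``only if'' direction I would use planarity to collapse the right-hand side to $q-1$, giving $\sum_c N(c)^2=2q-1$. Evenness of $f$ forces $N(c)$ to be even for every $c\ne 0$; combined with $\sum_c N(c)=q$ and the convex bound $N(c)^2\ge 2N(c)$ whenever $N(c)\ge 2$ (equality iff $N(c)=2$), I would obtain
\[
2q-1=\sum_c N(c)^2\ge N(0)^2+2(q-N(0))=(N(0)-1)^2+(2q-1).
\]
Hence $(N(0)-1)^2\le 0$, forcing $N(0)=1$ and $N(c)\in\{0,2\}$ for every $c\ne 0$, i.e., $f$ is $2$-to-$1$.

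For the converse I would reverse the same computation. Under the $2$-to-$1$ hypothesis the convex inequality is tight, so $\sum_c N(c)^2=2q-1$ and consequently $\sum_{a\ne 0}P(a)=q-1$. Since each of the $q-1$ nonzero $a$'s contributes $P(a)\ge 1$, every $P(a)$ must equal $1$, proving planarity. The hard part is the setup: pinning down $P(a)=|\ker B(\cdot,a)|\ge 1$ requires the DO structure to supply the canonical solution $x=-a/2$, which is where oddness of $q$ enters essentially, and the evenness of $N(c)$ for $c\ne 0$ hinges on pairing preimages $x\leftrightarrow -x$. Once those two structural facts are in place, the rest is a rigid combinatorial accounting in which every inequality is in fact an equality.
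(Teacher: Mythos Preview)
Your argument is correct. The paper does not supply its own proof of this lemma but simply quotes it from Weng and Hu \cite{weng}, so there is no in-paper argument to compare against; your double-counting identity $\sum_c N(c)^2 = q + \sum_{a\ne 0}P(a)$, combined with the DO facts $f(-x)=f(x)$, $B(x,x)=2f(x)$ and the explicit solution $x=-a/2$ of $f(x+a)=f(x)$, is the standard and cleanest route.

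One point to make explicit in the converse: tightness of $N(c)^2\ge 2N(c)$ for $c\ne 0$ only yields $\sum_c N(c)^2=N(0)^2+2(q-N(0))$, and reaching $2q-1$ additionally requires $N(0)=1$. The name ``$2$-to-$1$'' is normally read to include this (the unique odd fibre being $f^{-1}(0)=\{0\}$), but the paper's gloss ``every nonzero element has $0$ or $2$ preimages'' does not literally force it: for instance $f(x)=x^2+x^6=\tr_{\F_9/\F_3}(x^2)$ over $\F_9$ satisfies $N(c)\in\{0,2\}$ for every $c\ne 0$ yet has $N(0)=5$ and is not planar. So add a line stating that ``$2$-to-$1$'' is taken to include $N(0)=1$; once that is said, your reversal of the counting is airtight.
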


We shall need the following simple lemma.
\begin{lemma}\label{lemma_L_perm}
Let $q$ be an odd prime power and $w$ be a nonsquare of $\F_q$. If $L_1(X)$ and $L_2(X)$ are linearized polynomials such that $Q(x)=L_1(x)^2-wL_2(x)^2$ is a planar function over $\F_q$, then at least one of $L_1$ and $L_2$ is a permutation polynomial.
\end{lemma}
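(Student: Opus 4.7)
The plan is to unpack the planarity condition and exploit the additivity of $L_1$ and $L_2$ to pin down exactly how a nontrivial kernel of one of them forces the other to be a permutation.

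First I would use the fact that each $L_i$ is additive to compute, for any $c\in\F_q$,
\begin{equation*}
Q(x+c)-Q(x)-Q(c)=2L_1(c)L_1(x)-2wL_2(c)L_2(x),
\end{equation*}
since $L_i(x+c)^2-L_i(x)^2-L_i(c)^2=2L_i(x)L_i(c)$. Planarity of $Q$ is the statement that for every $c\neq 0$ the map $\Delta_c\colon x\mapsto L_1(c)L_1(x)-wL_2(c)L_2(x)$ is a permutation of $\F_q$ (the factor $2$ is a unit because $q$ is odd).

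Next I would argue by contradiction. Suppose neither $L_1$ nor $L_2$ is a permutation, so $\ker L_1$ and $\ker L_2$ are both nonzero. Pick any $c\in\ker L_1$ with $c\neq 0$. Then $L_1(c)=0$, so $\Delta_c(x)=-wL_2(c)L_2(x)$. Planarity forces $\Delta_c$ to be a permutation; but if $L_2(c)=0$ then $\Delta_c\equiv 0$, a contradiction, while if $L_2(c)\neq 0$ then $\Delta_c$ is a nonzero scalar multiple of $L_2$, so $L_2$ itself must be a permutation, again contradicting our assumption. Hence at least one of $L_1,L_2$ is a permutation polynomial.

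There is no real obstacle in this argument beyond writing down the planar difference correctly; the only place where anything nontrivial happens is the observation that $c\in\ker L_1\setminus\{0\}$ cannot also lie in $\ker L_2$ without immediately destroying the planar property. The role of the hypothesis that $w$ is a nonsquare does not actually enter this particular lemma (it only matters that $w\neq 0$); its real use will presumably appear in the applications of the lemma, where one needs $Q$ to be planar in the first place via the Weng--Hu $2$-to-$1$ criterion of Lemma~\ref{lemma_2to1}.
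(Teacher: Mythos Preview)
Your proof is correct and follows essentially the same route as the paper: both argue by contradiction, compute the difference $Q(x+c)-Q(x)-Q(c)=2L_1(c)L_1(x)-2wL_2(c)L_2(x)$, and use nonzero kernel elements of $L_1$ and $L_2$ to exhibit a nontrivial zero of the difference map. The paper is slightly terser---it just picks $u_1\in\ker L_1\setminus\{0\}$ and $u_2\in\ker L_2\setminus\{0\}$ and observes $Q(u_1+u_2)-Q(u_1)-Q(u_2)=0$---whereas you split into the cases $L_2(c)=0$ and $L_2(c)\neq 0$, but the content is identical. Your remark that only $w\neq 0$ is needed here (not the nonsquare hypothesis) is also correct.
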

\begin{proof}
If neither  $L_1$ nor $L_2$ is a permutation, then there exists nonzero elements $u_1,u_2$ such that $L_1(u_1)=L_2(u_2)=0$. However, this leads to that $Q(u_1+u_2)-Q(u_1)-Q(u_2)=0$, which contradicts the planarity property.
\end{proof}

A {\it quadratic space} is a pair $(Q,\,V)$ where $V$ is a finite dimensional vector space over $\F_q$ and $Q:\,V\mapsto\F_q$ satisfies that: (1) $Q(\lambda v)=\lambda^2Q(v)$ for all $\lambda\in\F_q$ and $v\in V$; (2) $B_Q(x,y):=Q(x+y)-Q(x)-Q(y)$ is a bilinear form. If we fix a basis $\{e_1,\cdots,e_n\}$ of $V$ over $\F_q$, then $f(x_1,\cdots, x_n)=Q(\sum_{i=1}^nx_ie_i)$ is a {\it quadratic form} in $n$ indeterminants over $\F_q$. A different choice of basis yields an equivalent form, and the {\it rank} of $Q$ is the minimum number of variables in a quadratic form induced from $Q$ in this way. The {\it radical} of a quadratic space $(Q,\,V)$ is
\[
\rad(Q):=\{v\in V:\,B_Q(v,x)=0 \textup{ for all } x\in V\}.
\]
In the following theorem we collect the facts about quadratic forms with $q$ even that we shall need, cf. \cite[Theorem 7.2.9]{handbookff} and Section 6.2 of \cite{ff}.
\begin{thm}\label{thm_N0}
Let $q$ be even and $(Q,V)$ be a quadratic space over $\F_q$. Fix an element $d\in\F_q$ such that $X^2+X+d$ is irreducible over $\F_q$. Write $n:=\dim(V)$, $r=\dim(\rad(Q))$, and denote by $N_0$ the number of $v\in V$ such that $Q(v)=0$. Then $n-r=2s$ is even and there is a basis of $V$ such that the resulting quadratic form is one of the following:
\begin{enumerate}
\item[(i)]  $x_1x_2+x_3x_4+\cdots+x_{2s-1}x_{2s}$ (hyperbolic),
\item[(ii)]  $x_1^2+x_1x_2+dx_2^2+x_3x_4+\cdots+x_{2s-1}x_{2s}$ (elliptic),
\item[(iii)] $x_0^2+x_1x_2+\cdots+x_{2s-1}x_{2s}$ (parabolic),
\end{enumerate}
and $N_0=q^{n-1}+(q-1)q^{r+s-1}\epsilon$, where $\epsilon=1,\,-1,\,0$ in (i), (ii) and (iii) respectively. Moreover, (iii) occurs if and only if $Q(\rad(Q))\ne \{0\}$.
\end{thm}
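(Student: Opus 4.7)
The plan is to analyze $Q$ on $\rad(Q)$, reduce to a nondegenerate quadratic space, classify by a Witt-style decomposition, and then count zeros from the resulting normal forms. The central preliminary observation is that $B_Q$ vanishes on $\rad(Q)$, so $Q|_{\rad(Q)}$ is additive; combined with $Q(\lambda v) = \lambda^2 Q(v)$ it is $\F_q$-semilinear via the Frobenius $\lambda \mapsto \lambda^2$. Because Frobenius is a field automorphism of $\F_q$, the image $Q(\rad(Q))$ is closed under addition and under $\F_q$-scalar multiplication (via taking square roots), hence is an $\F_q$-subspace of $\F_q$ and equals either $\{0\}$ or all of $\F_q$. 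This dichotomy drives the split between normal forms (i), (ii) and (iii).

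Suppose first that $Q(\rad(Q)) = \{0\}$. Choose a complement $W$ of $\rad(Q)$ in $V$; then $B_Q$ is nondegenerate symplectic on $W$, so $\dim W = n - r$ must be even, equal to $2s$. Induction on $s$ (pick an isotropic vector $e \in W$, which exists for $\dim W \geq 3$ by Chevalley--Warning; find $f$ with $B_Q(e,f) = 1$; replace $f$ by $f - Q(f) e$ so that $Q(f) = 0$; and iterate on $\langle e, f \rangle^\perp$) yields a Witt decomposition of $Q|_W$ into hyperbolic pairs plus possibly one anisotropic binary summand, which must take the form $x_1^2 + x_1 x_2 + d x_2^2$, giving (i) or (ii). If instead $Q(\rad(Q)) = \F_q$, set $R_0 = \ker(Q|_{\rad(Q)})$, which has codimension $1$ in $\rad(Q)$; then $Q$ descends to $\bar Q$ on $V/R_0$, whose bilinear radical $\rad(Q)/R_0$ is $1$-dimensional and carries a nonzero $Q$-value. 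Picking a complement $\bar W$ of $\rad(Q)/R_0$ in $V/R_0$, the classification from the first subcase applied to $\bar Q|_{\bar W}$ (on which $B_{\bar Q}$ is nondegenerate symplectic of dimension $n - r = 2s$) produces either a hyperbolic or an elliptic piece; in the elliptic case the ternary summand $x_0^2 + x_1^2 + x_1 x_2 + d x_2^2$ is isotropic (by Chevalley--Warning) and absorbs into $x_0^2 + u_1 u_2$ via the substitution $u_0 = x_0 + d^{1/2} x_2$, $u_1 = x_1$, $u_2 = x_1 + x_2$, reducing the overall form to (iii). In all three cases $n - r = 2s$.

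Counting zeros is then direct from the normal forms. The subspace on which $Q$ vanishes contributes a factor $q^r$ in cases (i), (ii) and $q^{r-1}$ in case (iii). A straightforward induction on $s$ (or a standard character-sum computation) shows that the $2s$-dimensional hyperbolic form has $q^{2s-1} + (q-1)q^{s-1}$ zeros and the $2s$-dimensional elliptic form has $q^{2s-1} - (q-1)q^{s-1}$ zeros; the $(2s+1)$-dimensional parabolic form has exactly $q^{2s}$ zeros because $x_0^2 = c$ has the unique solution $c^{1/2}$ in $\F_q$ for every $c$. Multiplying yields $N_0 = q^{n-1} + (q-1)q^{r+s-1}\epsilon$ with the asserted values of $\epsilon$. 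I expect the main obstacle to be the absorption step in the parabolic subcase: one must check cleanly that every $(2s+1)$-dimensional quadratic form with $1$-dimensional bilinear radical carrying a nonzero $Q$-value is equivalent to (iii), regardless of whether the nondegenerate quotient on $\bar W$ turns out to be hyperbolic or elliptic.
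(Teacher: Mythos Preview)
The paper does not prove this theorem; it is stated as a collection of known facts with citations to \cite[Theorem 7.2.9]{handbookff} and Section~6.2 of \cite{ff}. Your argument is correct and is essentially the standard proof one finds in those references: split according to whether $Q$ vanishes on $\rad(Q)$, use that $B_Q$ is a nondegenerate alternating form on any complement of the radical (forcing $n-r$ to be even), and obtain the normal forms by Witt decomposition. Your absorption step in the parabolic case via $u_0=x_0+d^{1/2}x_2$, $u_1=x_1$, $u_2=x_1+x_2$ is clean and correctly shows that the elliptic sub-subcase collapses to (iii); together with the hyperbolic sub-subcase this handles the concern you flagged. The zero-counts then follow exactly as you indicate.
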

\begin{corollary}\label{cor_rank}
Let $q$ be even and $(Q,V)$ be a quadratic space over $\F_q$ with $\dim_{\F_q}V=n$. Then the number of $v\in V$ such that $Q(v)=0$ is $q^{n-1}$ if and only if $Q$ has odd rank.
\end{corollary}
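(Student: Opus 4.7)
The plan is to derive the corollary directly from Theorem~\ref{thm_N0}, since all the hard work is already done there. First, I would apply Theorem~\ref{thm_N0} to get
\[
N_0 = q^{n-1} + (q-1) q^{r+s-1}\epsilon,
\]
where $r=\dim(\rad(Q))$, $n-r=2s$, and $\epsilon\in\{1,-1,0\}$ according to whether the canonical form is of type (i), (ii) or (iii). Since $(q-1)q^{r+s-1}\ne 0$, the equality $N_0=q^{n-1}$ is equivalent to $\epsilon=0$, which happens exactly in the parabolic case (iii). By the last assertion of Theorem~\ref{thm_N0}, this is in turn equivalent to $Q(\rad(Q))\ne\{0\}$.

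The remaining task is to translate the ``parabolic vs. non-parabolic'' dichotomy into the parity of the rank. I would do this by reading off the rank from each of the three canonical forms. In forms (i) and (ii) the $n-2s=r$ variables not appearing in the expression span the radical, and $Q$ vanishes on the entire radical; hence these $r$ variables can be discarded, giving a quadratic form in exactly $2s$ variables that cannot be further reduced, so $\rank(Q)=2s$ is even. In form (iii), exactly one radical basis vector $e_0$ satisfies $Q(e_0)\ne 0$, so $e_0$ cannot be absorbed into the ``useless'' radical directions; the form genuinely requires $2s+1$ variables, so $\rank(Q)=2s+1$ is odd.

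Putting these two steps together: $\rank(Q)$ is odd iff we are in case (iii) iff $\epsilon=0$ iff $N_0=q^{n-1}$. I do not expect any serious obstacle; the only subtle point is justifying that in case (iii) one cannot eliminate the variable $x_0$ by any invertible linear change of coordinates, which follows from the fact that $Q(e_0)=1$ while $Q$ vanishes on the complement $\langle e_{2s+1},\dots,e_{n-1}\rangle$ of the radical containing $e_0$, so the minimum number of variables is indeed $2s+1$.
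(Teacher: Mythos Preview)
Your proposal is correct and follows essentially the same approach as the paper's proof, which is the one-line observation that $Q$ has even rank $2s$ in cases (i) and (ii) and odd rank $2s+1$ in case (iii) of Theorem~\ref{thm_N0}. You have simply spelled out in more detail why $N_0=q^{n-1}$ forces $\epsilon=0$ and why the rank has the claimed parity in each case; the paper leaves both of these implicit.
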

\begin{proof}
In Theorem \ref{thm_N0}, $Q$ has even rank in case (i) and (ii), and has odd rank in case (iii).
\end{proof}

For $n\ge 1$, we define the trace function $\tr_{\F_{q^n}/\F_q}:\,\F_{q^n}\mapsto\F_q$ by $\tr_{\F_{q^n}/\F_q}(x)=x+x^q+\cdots+x^{q^{n-1}}$. If a function $f:\,\F_{q^n}\mapsto\F_{q^n}$ satisfies that  $f(\lambda x)=\lambda^df(x)$ for some positive integer $d$ and all $\lambda\in\F_q$ and $x\in\F_{q^n}$, then we call $f(X)$ a {\it homogeneous} polynomial of {\it degree} $d$ over the subfield $\F_q$.  Such a polynomial $f$ naturally induces a map $\bar{f}:\,\F_{q^n}^*/\F_q^*\mapsto \F_{q^n}^*/\F_q^*$ such that $\bar{f}(\bar{x}):=\overline{f(x)}$ for each $\bar{x}=x\F_q^*$. In the case $\bar{f}$ is a bijection, we say that $f$ induces a permutation of $\F_{q^n}^*/\F_q^*$ or simply say that $f$  permutes $\F_{q^n}^*/\F_q^*$.

\begin{lemma}\label{lemma_perm_rank} Let $Q(X)=\sum_{i,j}a_{ij}X^{q^i+q^j}\in\F_{q^n}[X]$ with $q$ even. Then $Q(X)$ is a permutation polynomial of $\F_{q^n}$ if and only if $Q_y(x)=\tr_{\F_{q^n}/\F_q}(yQ(x))$ has odd rank for $y\ne 0$.
\end{lemma}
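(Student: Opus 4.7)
The plan is to combine the standard character-sum criterion for permutation polynomials with the value distribution of $\F_q$-valued quadratic forms supplied by Theorem \ref{thm_N0}. Since $Q(X)$ is a DO polynomial over $\F_{q^n}$, the difference $Q(x+x')-Q(x)-Q(x')$ is $\F_q$-bilinear in $(x,x')$ and $Q(\lambda x)=\lambda^2Q(x)$ for $\lambda\in\F_q$; composing with the $\F_q$-linear map $z\mapsto\tr_{\F_{q^n}/\F_q}(yz)$, each $Q_y$ becomes a bona fide quadratic form on the $n$-dimensional $\F_q$-space $\F_{q^n}$, to which Theorem \ref{thm_N0} can be applied.

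Next, let $\chi_0$ denote the canonical additive character of $\F_q$. The additive characters of $\F_{q^n}$ are exactly the maps $\psi_y(z)=\chi_0(\tr_{\F_{q^n}/\F_q}(yz))$ with $y\in\F_{q^n}$, so by the standard Fourier orthogonality argument $Q(X)$ is a permutation of $\F_{q^n}$ if and only if $\sum_{x\in\F_{q^n}}\psi_y(Q(x))=0$ for every $y\ne 0$, i.e.\ if and only if $\sum_{x\in\F_{q^n}}\chi_0(Q_y(x))=0$ for every $y\ne 0$. The lemma therefore reduces to showing that this quadratic-form character sum vanishes precisely when $Q_y$ has odd rank.

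For this last step I would compute the sum directly from the value distribution of $Q_y$. Since $\lambda\mapsto\lambda^2$ is a bijection of $\F_q^*$ in characteristic $2$, the homogeneity $Q_y(\lambda x)=\lambda^2Q_y(x)$ forces every nonzero value of $Q_y$ to be attained the same number of times, namely $(q^n-N_0)/(q-1)$, where $N_0$ is as in Theorem \ref{thm_N0}. Substituting the formula $N_0=q^{n-1}+(q-1)q^{r+s-1}\epsilon$ and simplifying yields
\[
\sum_{x\in\F_{q^n}}\chi_0(Q_y(x))=N_0-\frac{q^n-N_0}{q-1}=\frac{qN_0-q^n}{q-1}=\epsilon\,q^{r+s},
\]
which vanishes exactly in case (iii) of Theorem \ref{thm_N0}, i.e.\ exactly when $Q_y$ has odd rank $2s+1$ rather than even rank $2s$. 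Combined with the Fourier criterion, this gives the lemma. No real obstacle is anticipated: the only nontrivial input (the value of $N_0$ as a function of rank and $\epsilon$) is already encoded in Theorem \ref{thm_N0}, and the characteristic-$2$ remark that every nonzero element is a square is what makes the reduction $N_c=N_1$ for all $c\in\F_q^*$ essentially automatic.
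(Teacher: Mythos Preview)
Your proof is correct, but it takes a different route from the paper's. You use the additive-character criterion for permutation polynomials together with the explicit value distribution of $Q_y$ to compute $\sum_x\chi_0(Q_y(x))=\epsilon\,q^{r+s}$, handling both directions of the equivalence in one stroke. The paper instead argues each direction separately without characters: for necessity it simply observes that if $Q$ is a bijection then $|\{x:Q_y(x)=0\}|=|\{z:\tr_{\F_{q^n}/\F_q}(yz)=0\}|=q^{n-1}$ and invokes Corollary~\ref{cor_rank}; for sufficiency it passes to the induced map on $\PG(n-1,q)$, shows via Corollary~\ref{cor_rank} that the preimage of every hyperplane has size $\frac{q^{n-1}-1}{q-1}$, and then uses a double-counting over the pencil of hyperplanes through a fixed point to conclude that every point has exactly one preimage. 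Your character-sum argument is cleaner and more uniform; the paper's approach is more elementary in that it avoids Fourier analysis altogether and stays at the level of counting solutions, with the projective double-count replacing the orthogonality relation.
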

\begin{proof}
If $Q(X)$ is a permutation polynomial of $\F_{q^n}$, then the size of $\{x:\,Q_y(x)=0\}$ is equal to that of $\{z:\,\tr_{\F_{q^n}/\F_q}(yz)=0\}$, which is $q^{n-1}$ for $y\ne 0$. By Corollary \ref{cor_rank}, the quadratic form $Q_y$ has odd rank for each $y\ne 0$. This proves the necessary part.

Now assume that $Q_y$ has odd rank for each $y\ne 0$. Since $q$ is even and $Q(X)$ is homogeneous of degree $2$ over $\F_q$, it is easy to show that $Q(X)$ is a permutation polynomial if and only if it induces a permutation of the quotient group $\F_{q^n}^*/\F_q^*$. We regard  $\F_{q^n}$ as a vector space over $\F_q$, and identify the elements of  $\F_{q^n}^*/\F_q^*$ with the projective points of $\PG(n-1,q)$ in the natural way. Each hyperplane of $\PG(n-1,q)$ is of the form $\{x\F_{q}^*:\,\tr_{\F_{q^n}/\F_{q}}(yx )=0\}$ for some $y\ne 0$, and $Q(x)\F_q^*$ lies on the hyperplane if and only if $Q_y(x)=0$. For each element $g\in \F_{q^n}^*/\F_{q}^*$, we define $n(g):=\{x\in \F_{q^n}^*/\F_{q}^*:\, Q(x)\F_q^*=g\}$.
For a set $U$ of projective points, write $n(U):=\sum_{g\in U}n(g)$. If $H$ is a hyperplane, then $n(H)=\frac{q^{n-1}-1}{q-1}$ by Corollary \ref{cor_rank}.
For a fixed point $g\in\PG(n-1,q)$, let $H_1,\cdots, H_{\frac{q^{n-1}-1}{q-1}}$ be the set of hyperplanes containing it. As a multiset, $\bigcup_iH_i$ covers the point $g$ exactly $\frac{q^{n-1}-1}{q-1}$ times and each other point $\frac{q^{n-2}-1}{q-1}$ times. We thus have
\begin{align*}
q^{n-2}\cdot n(g)+\frac{q^{n-2}-1}{q-1}\cdot\sum_{h\in \PG(n-1,q)}n(h)=\sum_in(H_i)=\left(\frac{q^{n-1}-1}{q-1}\right)^2.
\end{align*}
On the other hand, $\sum_{h\in \PG(n-1,q)}n(h)=\frac{q^n-1}{q-1}$. It follows that $n(g)=1$ for each $g$. Hence $Q(X)$ induces a permutation of $\F_{q^n}^*/\F_q^*$, and so it permutes $\F_{q^n}$. This proves the sufficiency part.
\end{proof}
The above lemma describes how to study the permutation behavior of a polynomial of DO type via quadratic forms. This is the technique that we will apply in Section 5.

\section{A function approach to $\cC$-planes and $\cH$-planes}\label{sect_function}

Let $q$ be a prime power, $n\ge 2$ be an integer, and regard   the finite field $\F_{q^{2n}}$  as a $2n$-dimensional vector space over $\F_q$. Let $\gamma$ be a primitive element of $\F_{q^{2n}}$ and let $\sigma:\,x\mapsto x^p$ be the Frobenius automorphism. For each $a\in\F_{q^{2n}}^*$, we use $\Theta(a)$ for the linear map from $\F_{q^{2n}}$ to itself that maps $x$ to $ax$. The group $\la\Theta(\gamma)\ra$ is a Singer group, and $\Gamma L(1,q^{2n})=\la\Theta(\gamma)\ra\rtimes \Aut(\F_{q^{2n}})$. Let $\cS$ be a spread with ambient space $(\F_{q^{2n}},+)$ and kernel $\F_q$. The associated affine plane $\Pi(\cS)$ is flag-transitive if and only if $\Aut(\cS)$ is transitive on the components. If further $\Aut(\cS)$ is solvable, then it is isomorphic to a subgroup of $\Gamma L(1,q^{2n})$. After taking an isomorphic spread if necessary, we assume that  $\Aut(\cS)$ is a subgroup of $\la\Theta(\gamma)\ra\rtimes \Aut(\F_{q^{2n}})$.
If the Singer subgroup $\Aut(\cS)\cap \la\Theta(\gamma)\ra$ has order $\frac{q^n+1}{2}$  or $q^n+1$ respectively, then we call $\cS$ a spread of {\it type $\cH$} or {\it type $\cC$} and the corresponding plane  $\Pi(\cS)$ a {\it $\cH$-plane} or a $\cC$-plane respectively. All the known flag-transitive affine planes with a solvable full collineation group  are either $\cC$-planes or $\cH$-planes, and there are no other such planes of order at most $125$. The following result summarizes Lemma 1 and the subsequent comments in \cite{ebert_sur}, restricting to the solvable case.

\begin{lemma}\label{lemma_gcd}
Let $q=p^e$ with $p$  prime and $n\ge2$ be an integer. If $\cS$ is a spread of $(\F_{q^{2n}},+)$ with kernel $\F_q$ such that $\Aut(\cS)$ is solvable and transitive on the components, then $\cS$ is either of type $\cH$ or of type $\cC$ provided that $\gcd(\frac{q^n+1}{2},ne)=1$ in the case $p$ is odd and $\gcd(q^n+1,ne)=1$ in the case $p=2$.
\end{lemma}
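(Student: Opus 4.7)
The plan is to exploit the containment of $\Aut(\cS)$ in $\Gamma L(1,q^{2n})=\la\Theta(\gamma)\ra\rtimes\Aut(\F_{q^{2n}})$ together with the arithmetic hypothesis to constrain the orbit structure of $H:=\Aut(\cS)\cap\la\Theta(\gamma)\ra$ on the $q^n+1$ components.

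First I would observe that $\la\Theta(\gamma)\ra$ is normal in $\Gamma L(1,q^{2n})$, so $H\trianglelefteq G:=\Aut(\cS)$ and the quotient $G/H$ embeds into $\Aut(\F_{q^{2n}}/\F_p)$, a cyclic group of order $2ne$; in particular $|G/H|\mid 2ne$. Let $\Omega$ be the set of $q^n+1$ components. Because $G$ is transitive on $\Omega$ and $H$ is normal, the $H$-orbits on $\Omega$ all share a common size $s$, giving $ks=q^n+1$ where $k$ is the number of orbits. Since $G/H$ permutes these $k$ orbits transitively, $k\mid|G/H|\mid 2ne$, so
\[
k\mid \gcd(q^n+1,\,2ne).
\]

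The arithmetic crux is to deduce $k\in\{1,2\}$ from the hypothesis. In the odd case, write $q^n+1=2m$ with $m=(q^n+1)/2$. Since $\gcd(m,ne)=1$, every odd prime dividing $\gcd(q^n+1,2ne)$ would have to divide both $m$ and $ne$, a contradiction, so the odd part of that gcd is $1$. For the $2$-part, the hypothesis forces either $v_2(m)=0$ (so $v_2(q^n+1)=1$) or $v_2(ne)=0$; in either case $v_2(\gcd(q^n+1,2ne))=\min(v_2(q^n+1),1+v_2(ne))=1$. Hence $\gcd(q^n+1,2ne)=2$ and $k\in\{1,2\}$. In the even case, $q^n+1$ is odd and coprime to $ne$, so $\gcd(q^n+1,2ne)=1$ and $k=1$.

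Finally I would convert the orbit count into the type dichotomy. Because the kernel of $\cS$ is $\F_q$, the only linear maps on $\F_{q^{2n}}$ fixing every component are the $\F_q$-scalars, so the kernel of the action of $H$ on $\Omega$ is exactly $\Theta(\F_q^*)\subseteq H$. Then $\bar H:=H/\Theta(\F_q^*)$ is cyclic and faithful on $\Omega$; in a cyclic group all subgroups of a given order are unique, so the point stabilisers of $\bar H$ on its $k$ orbits coincide and must be trivial by faithfulness. Hence $\bar H$ acts regularly on each of its $k$ orbits and $|\bar H|=(q^n+1)/k$. The case $k=1$ yields a cyclic linear subgroup transitive on $\Omega$ (type $\cC$), while $k=2$ yields one with two equal-sized orbits (type $\cH$). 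The main obstacle I anticipate is the $2$-adic bookkeeping in the odd case; once that is settled, the rest is routine group theory.
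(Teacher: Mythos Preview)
Your argument is correct. The paper does not give its own proof of this lemma; it simply attributes the statement to Lemma~1 and the surrounding discussion in Ebert's survey \cite{ebert_sur}. Your write-up supplies exactly the standard argument behind that citation: pass to $G/H$ inside the cyclic group $\Aut(\F_{q^{2n}})$ of order $2ne$, use normality of $H$ to get $k\mid\gcd(q^n+1,2ne)$, reduce to $k\in\{1,2\}$ via the arithmetic hypothesis, and then use that the kernel of the spread is $\F_q$ to pin down $|H|$. Two small remarks: first, your $2$-adic bookkeeping is fine and can be said more cleanly as $\min(1+v_2(m),1+v_2(ne))=1+\min(v_2(m),v_2(ne))=1$ since $\gcd(m,ne)=1$ forces one of $v_2(m),v_2(ne)$ to vanish; second, note that the paper's phrase ``has order $\tfrac{q^n+1}{2}$ or $q^n+1$'' refers to the induced action on the components (equivalently $|H|/(q-1)$), consistent with the later identification $H=\la\Theta(\beta^2)\ra$ or $\la\Theta(\beta)\ra$, which is precisely what your computation of $|\bar H|=(q^n+1)/k$ yields.
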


The above lemma indicates that $\cC$-planes and $\cH$-planes are the only possibilities under a mild number theoretical condition in the solvable case, and it is an open problem whether the $\gcd$ condition can be dropped. In this paper, we will focus on $\cC$-planes and $\cH$-planes. {\it Throughout the paper, we fix the following notation}. Take  $\beta$ to be an element of order $(q^n+1)(q-1)$. Let $\cS$ be a spread of type $\cH$ or type $\cC$ such that $\Aut(\cS)\cap \la\Theta(\gamma)\ra=\la\Theta(\beta^2)\ra$ or $\la\Theta(\beta)\ra$ respectively. Let $W$ be a component of $\cS$, so that $\cS=\{g(W):
\,g\in \Aut(\cS)\}$. Since the number of $\F_{q^n}$-subspaces of $\F_{q^{2n}}$ is $q^n+1$, there exists $\delta\in\F_{q^{2n}}\setminus\F_{q^n}$ such that $W\cap\F_{q^n}\cdot\delta=\{0\}$. From the decomposition $\F_{q^{2n}}=\F_{q^n}\oplus\F_{q^n}\cdot\delta$, we can write the $\F_q$-subspace $W$ as follows:
\begin{equation}\label{eqn_W}
W=\{x+\delta\cdot L(x):\,x\in\F_{q^n}\},
\end{equation}
where $L(X)\in\F_{q^n}[X]$ is a reduced $q$-polynomial. We also define
\begin{equation}\label{eqn_Q}Q(X):=(X+\delta L(X))\cdot(X+\delta^{q^n} L(X)),
\end{equation}
which is a DO polynomial over $\F_{q^n}$. The following is our key lemma.

\begin{lemma}\label{lemma_key} Take notation as above, and let $W$ be the $\F_q$-subspace  in \eqref{eqn_W}.
\begin{enumerate}
\item If $q$ is odd, then the orbit of $W$ under the group  $\la\Theta(\beta^2)\ra$ forms a partial spread if and only if $Q(x)$ is a planar function over $\F_{q^n}$.
\item If $q$ is odd, then the orbit of $W$ under the group $\la\Theta(\beta)\ra$ forms a spread if and only if $x\mapsto Q(x)$ induces a permutation of $\F_{q^n}^*/\F_q^*$.
\item If $q$ is even, then the orbit of $W$ under the group $\la\Theta(\beta)\ra$ forms a spread if and only if $x\mapsto Q(x)$ is a permutation of $\F_{q^n}$.
\end{enumerate}
\end{lemma}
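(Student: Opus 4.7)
The plan is to convert the orbit condition on $W$ into an arithmetic condition on $Q$ via the relative norm $N\colon \F_{q^{2n}}^*\to\F_{q^n}^*$, $z\mapsto z^{q^n+1}$. Writing $w(x):=x+\delta L(x)$, one checks at once that $Q(x)=w(x)\cdot w(x)^{q^n}=N(w(x))$, since $\delta^{q^n}$ is the $\F_{q^n}$-conjugate of $\delta$ and both $x$ and $L(x)$ lie in $\F_{q^n}$. Since $\beta$ has order $(q^n+1)(q-1)$, the element $N(\beta)=\beta^{q^n+1}$ has order exactly $q-1$, hence is a primitive element $\omega$ of $\F_q$. Counting orders against $|\ker N|=q^n+1$ gives $\la\beta\ra=N^{-1}(\F_q^*)$; when $q$ is odd, $N(\beta^2)=\omega^2$ generates the index-two subgroup of squares, so the same count yields $\la\beta^2\ra=N^{-1}((\F_q^*)^2)$ and $\ker N\cap\F_q^*=\{\pm 1\}$.

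The key translation is: for $\alpha\in\F_{q^{2n}}^*$ and $x,y\in\F_{q^n}^*$, the equation $\alpha w(y)=w(x)$ is equivalent to $\alpha=w(x)/w(y)$, with norm $N(\alpha)=Q(x)/Q(y)$. Thus $\alpha W\cap W\ne\{0\}$ for some $\alpha\in\la\beta^2\ra$ (resp.\ $\la\beta\ra$) is equivalent to the existence of $x,y$ with $Q(x)/Q(y)\in(\F_q^*)^2$ (resp.\ $\F_q^*$). Two further standing facts will be needed: $w$ is $\F_q$-linear, so $\mu w(y)=w(\mu y)$ for $\mu\in\F_q^*$; and any $\alpha\in\F_{q^{2n}}^*$ stabilizing $W$ automatically stabilizes every $\beta^jW$, hence lies in the kernel of the plane, which by the standing hypothesis is $\F_q$, forcing $\alpha\in\F_q^*$.

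For Part~(1)~$(\Leftarrow)$, Lemma~\ref{lemma_2to1} reduces planarity to the $2$-to-$1$ property: given $\beta^{2k}w(y)=w(x)$ the norm relation yields $Q(x)=Q(\omega^ky)$, so $x=\pm\omega^ky$, and substituting back into the $w$-identity gives $\beta^{2k}=\pm\omega^k\in\F_q^*\subseteq\mathrm{Stab}(W)$, hence $\beta^{2k}W=W$. For $(1)~(\Rightarrow)$, take $Q(x_1)=Q(x_2)$ with $x_i\ne 0$; then $\alpha:=w(x_1)/w(x_2)$ lies in $\ker N\subseteq\la\beta^2\ra$ (using $q-1$ even), and the partial-spread hypothesis forces $\alpha W=W$, so by the stabilizer fact $\alpha\in\F_q^*\cap\ker N=\{\pm 1\}$, giving $x_1=\pm x_2$ and hence $Q$ is $2$-to-$1$. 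Parts~(2) and~(3) follow the same template with $\la\beta\ra$ and $\F_q^*$ replacing $\la\beta^2\ra$ and $(\F_q^*)^2$, showing that the orbit is a full spread iff $\bar Q$ is a bijection of $\F_{q^n}^*/\F_q^*$; the orbit length $q^n+1$ is pinned down by the stabilizer observation. Part~(3) then invokes the remark inside the proof of Lemma~\ref{lemma_perm_rank}, which says that in characteristic two a degree-$2$ homogeneous polynomial permutes $\F_{q^n}$ iff it permutes $\F_{q^n}^*/\F_q^*$.

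The main obstacle is the $(\Rightarrow)$ step in all three parts: one must push an element $\alpha\in\mathrm{Stab}(W)\cap\F_{q^{2n}}^*$ into $\F_q^*$, and this is the single place that uses the standing hypothesis that the kernel of the plane is $\F_q$. Once that point is secured, everything else is routine norm bookkeeping built on the dictionary set up in the first paragraph.
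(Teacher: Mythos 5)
Your argument is correct and follows essentially the same route as the paper: both proofs rest on the identity $Q(x)=N(x+\delta L(x))$ for the relative norm $N(z)=z^{q^n+1}$, the identification of $\la\beta\ra$ (resp.\ $\la\beta^2\ra$) as $N^{-1}(\F_q^*)$ (resp.\ $N^{-1}((\F_q^*)^2)$), and Lemma \ref{lemma_2to1} to pass between planarity and the $2$-to-$1$ property. The one place you diverge is in ruling out a stabilizer of $W$ larger than $\F_q^*$ in the ``only if'' directions: you appeal to the hypothesis that the kernel of the plane is $\F_q$, whereas the paper's mechanism is purely group-theoretic --- any $\alpha$ with $\alpha W=W$ generates a subfield $\F_{q^d}$ acting on $W$, so $d\mid\dim_{\F_q}W=n$ and $\alpha\in\la\beta^2\ra\cap\F_{q^n}^*=\F_q^*$. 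The latter route is preferable because the lemma is subsequently applied (e.g.\ in Lemma \ref{lemma_ps} and the corollary following Lemma \ref{lemma_key}) to subspaces $W$ for which only the partial-spread property of the orbit is assumed and no ambient spread of kernel $\F_q$ is given; your kernel-based step would not be available there, though the conclusion still holds by the dimension argument just indicated.
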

\begin{proof}
We first prove (1). The orbit of $W$ under $\la\Theta(\beta^2)\ra$ forms a partial spread if and only if the following holds: $y+\delta L(y)=\beta^{2i}(x+\delta L(x))\ne 0$ occurs only in the case $\beta^{2i}\in\F_q^*$ and $y=\beta^{2i}x$.
First assume that we get a partial spread from $W$ as described. If $Q(x)=Q(y)$ for $xy\ne 0$, then  $s_1^{q^n+1}=1$, where $s_1=\frac{y+\delta L(y)}{x+\delta L(x)}$. Since $s_1\in\la \beta^2\ra$,  it follows from our assumption that $s_1\in\F_q^*$ and $y=s_1x$. Since $\gcd(q^n+1,q-1)=2$, we get $s_1^2=1$, i.e., $s_1=\pm 1$. We thus have shown that $x\mapsto Q(x)$ is $2$-to-$1$, and it follows that $Q(x)$ is planar by Lemma \ref{lemma_2to1}.
Conversely, assume that $Q(x)$ is a planar function. If $y+\delta L(y)=\beta^{2i}(x+\delta L(x))\ne 0$, then taking norm we get $Q(y)=\beta^{2(q^n+1)i}Q(x)=Q(\beta^{(q^n+1)i}x)$. It follows from  Lemma \ref{lemma_2to1} that $y=\pm \beta^{(q^n+1)i}x$. Plugging this into $y+\delta L(y)$, we get $\pm \beta^{(q^n+1)i}=\beta^{2i}$. This gives that $\beta^{2i(q^n-1)}=1$, i.e., $\beta^{2i}\in\F_{q^n}^*$. It is easy to show that $\la \beta^2\ra\cap \F_{q^n}^*=\F_q^*$, so $\beta^{2i}\in\F_q^*$. The conclusion now follows.

We next prove (2). Since $Q(\lambda x)=\lambda^2Q(x)$ for $\lambda\in\F_q$, the map $x\mapsto Q(x)$ induces a function from $\F_{q^n}^*/\F_q^*$ to itself. Observe that $\la\beta\ra$ is the set of elements in $\F_{q^{2n}}^*$ whose relative norm to $\F_{q^n}$ is in $\F_q^*$. The orbit of $W$ under $\la\Theta(\beta)\ra$ forms a spread if and only if the following holds: $y+\delta L(y)=\beta^{i}(x+\delta L(x))\ne 0$ occurs only in the case $\beta^{i}\in\F_q^*$ and $y=\beta^{i}x$.
First assume that we get a  spread from $W$ as described. If $Q(x)/Q(y)\in\F_{q}^*$ for $xy\ne 0$, then $y+\delta L(y)=s_1(x+\delta L(x))$ for some $s_1\in\la\beta\ra$. It follows that $s_1\in\F_q^*$ and $y=s_1x$. Hence $Q(x)$ permutes $\F_{q^n}^*/\F_q^*$.
Conversely, assume that $Q(x)$ permutes $\F_{q^n}^*/\F_q^*$. If $y+\delta L(y)=\beta^{i}(x+\delta L(x))\ne 0$, then taking norm we get $Q(y)/Q(x)=\beta^{i(1+q^n)}\in\F_{q}^*$. It follows that $y/x\in\F_q^*$, and  the conclusion follows.

The claim (3) can be proved similarly, and we omit the details.
\end{proof}
\begin{corollary}
Let $W$ be the $\F_q$-subspace in \eqref{eqn_W}, and assume that $q$ and $n$ are odd. If the orbit of $W$ under $\la\Theta(\beta^2)\ra$ forms a partial spread, then its orbit under $\la\Theta(\beta)\ra$ forms a  spread of type $\cC$.
\end{corollary}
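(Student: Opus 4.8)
The plan is to translate both hypotheses into statements about the DO polynomial $Q$ via Lemma~\ref{lemma_key}, and then prove the required implication between them. By Lemma~\ref{lemma_key}(1) the hypothesis says exactly that $Q$ is planar over $\F_{q^n}$, and by Lemma~\ref{lemma_key}(2) the desired conclusion is that $x\mapsto Q(x)$ induces a permutation of $\F_{q^n}^*/\F_q^*$; once the $\la\Theta(\beta)\ra$-orbit is a spread it is automatically of type $\cC$, since $\la\Theta(\beta)\ra$ is then transitive on its $q^n+1$ components and induces a cyclic group of order $q^n+1$ on the line at infinity. So the whole statement reduces to: \emph{if $q,n$ are odd and $Q$ is planar, then $\bar Q\colon\F_{q^n}^*/\F_q^*\to\F_{q^n}^*/\F_q^*$ is a bijection.} As source and target both have size $M:=(q^n-1)/(q-1)$, it suffices to prove injectivity.

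Throughout I would write $Q(x)=N(\phi(x))$ with $\phi(x)=x+\delta L(x)$ and $N=N_{\F_{q^{2n}}/\F_{q^n}}$, and use three facts that hold because $n$ is odd: a nonsquare of $\F_q^*$ stays a nonsquare of $\F_{q^n}^*$ (and $\beta$ is a nonsquare of $\F_{q^{2n}}^*$); $M$ is odd; and $\la\beta^2\ra=\{z:\ N(z)\in\F_q^{*2}\}$ while $\la\beta\ra=\{z:\ N(z)\in\F_q^*\}$. By Lemma~\ref{lemma_2to1}, planarity makes $Q$ a $2$-to-$1$ map with fibres $\{\pm x\}$, so $S:=\{Q(x):x\neq0\}$ has size $(q^n-1)/2$ and is $\F_q^{*2}$-invariant. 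Now suppose $\bar Q(\bar x)=\bar Q(\bar y)$, i.e.\ $Q(x)=\mu Q(y)$ with $\mu\in\F_q^*$. If $\mu=\nu^2$ is a square, then $Q(x)=Q(\nu y)$ by homogeneity, hence $x=\pm\nu y$ and $\bar x=\bar y$; this case is precisely the injectivity of $\bar\phi$ modulo $\la\beta^2\ra$ already furnished by the partial spread. The entire difficulty is therefore the case where $\mu$ is a \textbf{nonsquare}, which must be shown impossible; equivalently, I must prove $S\cap\mu_0 S=\emptyset$ for a nonsquare $\mu_0\in\F_q^*$, so that $S\sqcup\mu_0 S=\F_{q^n}^*$ and the half-spread $\mathcal P$ together with its translate $\Theta(\beta)\mathcal P$ partition the nonzero vectors.

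I would first clear the coarsest failure by a parity argument. If some odd power of $\beta$ stabilised $W$, then taking norms gives $S=\mu_0 S$, forcing $S$ to be a union of full $\F_q^*$-cosets and hence $(q-1)\mid(q^n-1)/2$, i.e.\ $M$ even, contradicting $n$ odd. Thus the stabiliser of $W$ in $\la\beta\ra$ is exactly $\F_q^*$, the $\la\beta\ra$-orbit consists of exactly $q^n+1$ distinct components, and it is a spread precisely when $\mathcal P$ and $\Theta(\beta)\mathcal P$ are point-disjoint, i.e.\ exactly when the nonsquare case is excluded. To exclude it I would exploit the $\F_{q^2}$-structure available because $n$ is odd: since $(q+1)\mid(q^n+1)$ we have $\F_{q^2}^*\subseteq\la\beta\ra$ and $\mu_0=N(c)$ for some $c\in\F_{q^2}^*\setminus\F_q^*$ with $c^{q+1}=\mu_0$, and I would combine this with the norm-one coordinate $\rho(x)=\phi(x)/\phi(x)^{q^n}$ and with Lemma~\ref{lemma_L_perm}, writing $Q(x)=L_1(x)^2-wL(x)^2$ with $w=(\delta-\delta^{q^n})^2/4$ a nonsquare so that one of $L_1,L$ permutes $\F_{q^n}$, to drive a contradiction with the $2$-to-$1$ property.

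The step I expect to be the main obstacle is exactly this nonsquare case $S\cap\mu_0 S=\emptyset$, and it is genuinely global. The tempting shortcut—that every $Q(x)$ has the same quadratic character, i.e.\ that $W^*$ lies in a single square class of $\F_{q^{2n}}^*$—holds only in the Desarguesian situation, and in fact the restriction of $Q$ to a single $2$-dimensional $\F_q$-subspace can itself be $2$-to-$1$, so no purely local count will suffice. The robust route is to encode both hypotheses through the sums $\sum_{x\neq0}\tilde\chi(Q(x))=\sum_{w\in W^*}(\tilde\chi\circ N)(w)$ over characters $\tilde\chi$ of $\F_{q^n}^*$ trivial on $\F_q^*$: the partial spread controls these sums for the characters coming from $\la\beta^2\ra$, whereas being a spread is equivalent to the vanishing of the index-two subfamily factoring through $\la\beta\ra$, and the real content is to show that $n$ odd forces all of the mass onto the complementary nonsquare characters. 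This is where the Singer and Baer-subgeometry geometry alluded to in the introduction enters, and it is the crux of the argument.
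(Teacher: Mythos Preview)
Your reduction is correct and matches the paper's: by Lemma~\ref{lemma_key}(1) the hypothesis is that $Q$ is planar, by Lemma~\ref{lemma_key}(2) the conclusion is that $\bar Q$ permutes $\F_{q^n}^*/\F_q^*$, and with Lemma~\ref{lemma_2to1} the square case of $\mu\in\F_q^*$ goes through exactly as you wrote. You have also correctly isolated the crux: one must show $S\cap\mu_0 S=\emptyset$ for a nonsquare $\mu_0\in\F_q^*$, where $S=\{Q(x):x\ne 0\}$.

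The gap is that you never prove this. Your parity argument only rules out an odd power of $\beta$ stabilising $W$; it does not touch the disjointness of $S$ and $\mu_0 S$. The subsequent paragraph about the $\F_{q^2}$-structure, the norm-one coordinate $\rho$, and Lemma~\ref{lemma_L_perm} is a sketch of ingredients rather than an argument, and you yourself flag the nonsquare case as ``the main obstacle'' and ``the crux'' before proposing a character-sum route that is neither stated precisely nor carried out. As it stands the proposal does not establish the corollary.

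The paper's proof of this step is a one-liner, not a global character-sum argument: it invokes \cite[Proposition~3.6]{weng}, which says that for \emph{any} planar DO function $Q$ on a field of odd order the image set $D=\{Q(x):x\ne 0\}$ and its complement $E=\F_{q^n}^*\setminus D$ satisfy $E=\lambda D$ for every nonsquare $\lambda\in\F_{q^n}^*$. Since $n$ is odd, a nonsquare of $\F_q^*$ remains a nonsquare of $\F_{q^n}^*$, so $\mu_0 S=E$ is disjoint from $S=D$ immediately. Thus $Q(y)/Q(x)\in\F_q^*$ forces $Q(y)/Q(x)$ to be a square, and your square-case argument finishes. The missing ingredient was precisely this general fact about images of planar DO polynomials; once you have it, the elaborate machinery you propose (Baer subgeometries, character sums over $W^*$, the $\F_{q^2}$-structure) is unnecessary.
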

\begin{proof}
By the previous lemma, we see that $Q(x)$ is a planar function, and so $x\mapsto Q(x)$ is $2$-to-$1$ by Lemma \ref{lemma_2to1}. Denote by $D$ the image set  $\{Q(x):\,x\ne 0\}$, and write $E$ for its complement in $\F_{q^n}^*$. For a nonsquare $\lambda\in\F_q^*$, we have $E=\lambda \cdot D$  by \cite[Proposition 3.6]{weng}. In particular, $Q(y)/Q(x)\in\F_q^*$ implies that $Q(y)/Q(x)=u^2$ for some $u\in\F_q^*$. The $2$-to-$1$ property of $Q$ then gives that  $y/x=\pm u\in\F_q^*$. This shows that $Q(x)$ permutes $\F_{q^n}^*/\F_q^*$. The claim then follows from claim (2) in Lemma \ref{lemma_key}.
\end{proof}

\begin{thm}
There is no type $\mathcal{C}$ spread with ambient space $(\F_{q^{2n}},+)$ and kernel $\F_q$ when $n$ is even and $q$ is odd.
\end{thm}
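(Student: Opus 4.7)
The plan is to assume a $\cC$-spread of the given type exists and derive a contradiction, combining Lemma \ref{lemma_key} with \cite[Proposition~3.6]{weng}.

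First I would carry out the standard reduction. With $W$ and $Q$ as in \eqref{eqn_W} and \eqref{eqn_Q}, Lemma \ref{lemma_key}(2) forces $Q$ to induce a permutation of $\F_{q^n}^*/\F_q^*$. Since $\la\Theta(\beta^2)\ra$ has index two in $\la\Theta(\beta)\ra$, the orbit of $W$ under $\la\Theta(\beta^2)\ra$ is contained in the hypothetical spread and is therefore a partial spread, so Lemma \ref{lemma_key}(1) yields that $Q$ is planar on $\F_{q^n}$. Lemma \ref{lemma_2to1} then gives $|D|=(q^n-1)/2$, where $D:=Q(\F_{q^n}^*)$.

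The crucial structural input is \cite[Proposition~3.6]{weng}: for a planar DO function on $\F_{q^n}$, the image set $D$ is a union of cosets of the squares $(\F_{q^n}^*)^2$, so $\eta D=D$ for every $\eta\in(\F_{q^n}^*)^2$. The arithmetic input specific to the even case is that $n$ even implies $(q-1)\mid (q^n-1)/2$, so every $\lambda\in\F_q^*$ satisfies $\lambda^{(q^n-1)/2}=1$; in particular $\F_q^*\subseteq(\F_{q^n}^*)^2$. Consequently, for any nonsquare $\lambda\in\F_q^*$ one still has $\lambda D=D$.

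Fix such a $\lambda$ and any $x\in\F_{q^n}^*$. Then $\lambda Q(x)=Q(z)$ for some $z\ne 0$, with ratio $Q(z)/Q(x)=\lambda\in\F_q^*$. The permutation property of $Q$ on $\F_{q^n}^*/\F_q^*$ forces $z=cx$ for some $c\in\F_q^*$, and then $\lambda=c^2\in(\F_q^*)^2$, contradicting the choice of $\lambda$ as a nonsquare of $\F_q^*$.

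The main delicate point is to apply \cite[Proposition~3.6]{weng} in its full $(\F_{q^n}^*)^2$-invariant form, rather than the weaker $(\F_q^*)^2$-invariance that is automatic from $Q$ being DO over $\F_q$. This sharpening is exactly what separates the $n$-even case here from the $n$-odd case of the Corollary above: when $n$ is odd, $\F_q$-nonsquares are $\F_{q^n}$-nonsquares, the two invariance notions coincide on $\F_q^*$, and the contradiction above cannot be reached.
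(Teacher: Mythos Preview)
Your proof is correct but follows a different route from the paper's. The paper argues via quadratic forms: from the permutation property of $Q$ on $\F_{q^n}^*/\F_q^*$ it invokes \cite[Lemma~3.3]{weng} (which uses planarity) to conclude that $\tr_{\F_{q^n}/\F_q}(Q(x))$ is a nondegenerate quadratic form over $\F_q$; for $n$ even the standard zero-count then gives $|\{x:\tr_{\F_{q^n}/\F_q}(Q(x))=0\}|=q^{n-1}\pm(q-1)q^{n/2-1}$, whereas the permutation property forces this number to equal $|\{y:\tr_{\F_{q^n}/\F_q}(y)=0\}|=q^{n-1}$, a contradiction.

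Your argument instead uses \cite[Proposition~3.6]{weng} to see that the image set $D$ is a single coset of $(\F_{q^n}^*)^2$, and then exploits the arithmetic fact that $n$ even forces $\F_q^*\subseteq(\F_{q^n}^*)^2$; thus even a nonsquare $\lambda\in\F_q^*$ satisfies $\lambda D=D$, and the permutation property then yields $\lambda=c^2$ with $c\in\F_q^*$. This is a clean alternative: it reuses exactly the same ingredient as the preceding Corollary and makes the even/odd dichotomy for $n$ completely transparent---when $n$ is odd a nonsquare of $\F_q^*$ remains a nonsquare in $\F_{q^n}^*$ and the argument instead produces a $\cC$-spread, while for $n$ even it collapses to a contradiction. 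The paper's quadratic-form approach, by contrast, foreshadows the rank technique developed in Section~5 for even characteristic. Both proofs are short; yours stays closer to the machinery already in play for the odd-order case.

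One minor remark: you derive planarity via Lemma~\ref{lemma_key}(1) and the partial-spread observation, which is fine, but note that it also follows directly from the permutation property on $\F_{q^n}^*/\F_q^*$ together with $Q(\lambda x)=\lambda^2 Q(x)$, since $Q(x)=Q(y)$ with $xy\ne0$ forces $y=\pm x$ and hence $Q$ is $2$-to-$1$.
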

\begin{proof}
We take notation introduced preceding Lemma \ref{lemma_key} and prove by contradiction. By claim (2) in Lemma \ref{lemma_key}, $x\mapsto Q(x)$ induces a permutation of $\F_{q^n}^*/\F_q^*$.
By \cite[Lemma 3.3]{weng}, $\tr_{\F_{q^n}/\F_q}(Q(x))$ is a nondegenerate quadratic form over $\F_q$. By \cite[Theorem 6.26]{ff},
the number $N_0=\#\{x\in\F_{q^n}:\,\tr_{\F_{q^n}/\F_q}(Q(x))=0\}$ is equal to $q^{n-1}\pm(q-1)q^{n/2-1}$. On the other hand, $N_0=\#\{y\in\F_{q^n}:\,\tr_{\F_{q^n}/\F_q}(y)=0\}=q^{n-1}$ by the fact that  $Q(x)$ permutes $\F_{q^n}^*/\F_q^*$. This contradiction completes the proof.
\end{proof}

In the case $q$ is odd, we may further restrict the form of $W$.
\begin{lemma}\label{lemma_delta}
Let $\cS$ be a spread  such that $\Aut(\cS)\cap \la\Theta(\gamma)\ra$ contains $\la\Theta(\beta^2)\ra$, and let $W$ be a component.
If $q$ is odd and $\delta^{q^n-1}=-1$, then $W$ intersects $\F_{q^n}$ or $\F_{q^n}\cdot\delta$ trivially.
\end{lemma}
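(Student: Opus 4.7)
My plan is to argue by contradiction: assume both $W\cap\F_{q^n}\ne\{0\}$ and $W\cap\F_{q^n}\cdot\delta\ne\{0\}$, and use Lemma \ref{lemma_L_perm} together with the planarity statement of Lemma \ref{lemma_key}(1) to reach a contradiction.

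First I would choose an auxiliary $\delta_0\in\F_{q^{2n}}\setminus\F_{q^n}$ with $W\cap\F_{q^n}\cdot\delta_0=\{0\}$; such a $\delta_0$ exists by the same counting argument as in the setup, since the $q^n-1$ nonzero vectors of $W$ can cover at most $q^n-1$ of the $q^n+1$ one-dimensional $\F_{q^n}$-subspaces of $\F_{q^{2n}}$, leaving at least two untouched, at most one of which can be $\F_{q^n}$ itself. I would then write $W=\{x+\delta_0 L_0(x):x\in\F_{q^n}\}$ for a reduced $q$-polynomial $L_0$, and decompose $\delta_0=a+b\delta$ with $a,b\in\F_{q^n}$ and $b\ne 0$ (since $\delta_0\notin\F_{q^n}$). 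Using $\delta^{q^n}=-\delta$ one obtains $\delta_0^{q^n}=a-b\delta$, and a direct expansion gives
\[
Q_0(X):=(X+\delta_0 L_0(X))(X+\delta_0^{q^n}L_0(X))=L_1(X)^2-w\,L_0(X)^2,
\]
where $L_1(X):=X+aL_0(X)$ and $w:=b^2\delta^2$. The point is that $\delta^2\in\F_{q^n}^*$ is a nonsquare (otherwise $\delta=\pm c$ for some $c\in\F_{q^n}$, contradicting $\delta\notin\F_{q^n}$), and so $w$ is a nonsquare too.

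Next I would translate the two hypothetical intersections into kernel information. Any nonzero element of $W\cap\F_{q^n}$ yields $L_0(x)=0$ for some $x\ne 0$, so $L_0$ is not a permutation. For a nonzero $y\delta\in W\cap\F_{q^n}\cdot\delta$, writing $y\delta=L_1(x')+b\delta L_0(x')$ and comparing the $\F_{q^n}$- and $\F_{q^n}\cdot\delta$-components of the direct sum $\F_{q^{2n}}=\F_{q^n}\oplus\F_{q^n}\cdot\delta$ forces $L_1(x')=0$ and $b\,L_0(x')=y\ne 0$, hence $x'\ne 0$, so $L_1$ is not a permutation either. On the other hand, $\la\Theta(\beta^2)\ra\subseteq\Aut(\cS)$ makes the orbit of $W$ a partial spread in $\cS$, so Lemma \ref{lemma_key}(1) gives that $Q_0$ is planar over $\F_{q^n}$; then Lemma \ref{lemma_L_perm} applied to $Q_0=L_1^2-wL_0^2$ (with $w$ nonsquare) forces at least one of $L_0$ or $L_1$ to be a permutation polynomial, yielding the desired contradiction.

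The main delicate step is the algebraic identification $Q_0=L_1^2-wL_0^2$ with $w$ a nonsquare: this is exactly where the hypothesis $\delta^{q^n-1}=-1$ enters, since it both keeps $\delta_0^{q^n}=a-b\delta$ clean and places $\delta^2$ in the nonsquare coset of $\F_{q^n}^*$. Once this form is in hand, Lemma \ref{lemma_L_perm} is tailor-made for the job, and the rest of the argument is bookkeeping in the $\F_{q^n}$-direct-sum decomposition.
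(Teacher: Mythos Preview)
Your proof is correct and follows essentially the same route as the paper: write $W$ so that the associated norm form becomes $L_1^2-wL_2^2$ with $w=\delta^2$ (or $b^2\delta^2$) a nonsquare of $\F_{q^n}$, invoke Lemma~\ref{lemma_key}(1) to get planarity, and then apply Lemma~\ref{lemma_L_perm}. The only difference is cosmetic: the paper parametrizes $W$ directly as $\{L_1(x)+\delta L_2(x):x\in\F_{q^n}\}$ for two $q$-polynomials $L_1,L_2$ and reruns the argument of Lemma~\ref{lemma_key}, whereas you first choose an auxiliary $\delta_0$ with $W\cap\F_{q^n}\delta_0=\{0\}$ so that Lemma~\ref{lemma_key}(1) applies verbatim, and then change basis from $\{1,\delta_0\}$ to $\{1,\delta\}$; either way one lands on the same planar DO polynomial and the same contradiction.
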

\begin{proof}
From the decomposition $\F_{q^{2n}}=\F_{q^n}\oplus\F_{q^n}\cdot\delta$, we can write $W=\{L_1(x)+\delta L_2(x):\,x\in\F_{q^n}\}$ for some $q^n$-polynomials $L_1(X),\,L_2(X)$. The argument in Lemma \ref{lemma_key} shows that $Q(x)=L_1(x)^2-\delta^2L_2(x)^2$ is planar. By Lemma \ref{lemma_L_perm}, at least one of $L_1,\,L_2$ is a permutation, and correspondingly $W$ intersects one of $\F_{q^n}$ and $\F_{q^n}\cdot\delta$ trivially.
\end{proof}

\begin{remark}In Lemma \ref{lemma_delta},  if $\Aut(\cS)$ is transitive, then $\cS'=\{g(\delta\cdot W):\,g\in \Theta(\delta)\cdot\Aut(\cS)\cdot\Theta(\delta)^{-1}\}$ is a spread isomorphic to $\cS$ and $\Aut(\cS')$ contains $\la\Theta(\beta^2)\ra$. So after replacing $W$ by $\delta\cdot W$ if necessary, we always assume that $\delta^{q^n-1}=-1$ in \eqref{eqn_W} in the case $q$ is odd.
\end{remark}

The Kantor-Suetake family constitutes a major part of the known non-Desarguesian flag-transitive affine planes of odd order.  As a first application of our new approach, we give a characterization of this important family based on the following  result of Menichetti. Please refer to \cite{albert_tw,handbook} for details on generalized twisted fields.
\begin{thm}\label{thm_men}\cite{meni1,meni2}
Let $S$ be a finite semifield of prime dimension $n$ over the nucleus $\F_q$. Then there is an integer $\nu(n)$ such that if $q\ge \nu(n)$ then $S$ is isotopic to a finite field or a generalized twisted field. Moreover, we have $\nu(3)=0$.
\end{thm}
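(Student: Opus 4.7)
The plan is to encode the semifield structure as an algebraic variety over $\F_q$ and then apply Weil-type estimates. Fix an $\F_q$-basis of $S$ and write the right-multiplication map $R_y\colon x\mapsto xy$ as a matrix whose entries are $\F_q$-linear in the coordinates of $y$. The no-zero-divisor hypothesis is equivalent to the statement that the norm form $N(y):=\det R_y$, a homogeneous polynomial of degree $n$ in $n$ variables, is anisotropic over $\F_q$; two semifields are isotopic exactly when their norm forms agree up to invertible $\F_q$-linear changes of the variables $x$, $y$, and $xy$ that intertwine the trilinear structure $(x,y,z)\mapsto \tr(xyz)$.

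Next I would extend scalars to $\overline{\F_q}$ and factor $N$ into absolutely irreducible components, recording how $\Gal(\overline{\F_q}/\F_q)$ permutes them. Because $n$ is prime, each Galois orbit has length $1$ or $n$, and only two orbit patterns can yield an anisotropic norm: all factors defined over $\F_q$ (the Desarguesian case, where $N$ is forced to be the norm of $\F_{q^n}/\F_q$ up to scalar), or one single orbit of $n$ factors cyclically permuted by Frobenius (the generalized twisted field case, in which the shape of $N$ is that of a twisted-field norm). Any mixed pattern, or a factor of intermediate degree, produces an absolutely irreducible subvariety of $\{N=0\}\subset\PG(n-1,\overline{\F_q})$ defined over a proper subfield, and Weil (or Lang--Weil) then supplies enough $\F_q$-rational points on it to contradict anisotropy, once $q$ exceeds an explicit bound $\nu(n)$ depending only on $n$.

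The main obstacle is in turning this rough dichotomy into a sharp structural conclusion: one must rule out every intermediate Galois orbit pattern and then, in the surviving single-orbit case, identify $N$ explicitly up to linear equivalence as the norm arising from a generalized twisted field multiplication, which requires a descent argument tracking the Frobenius action on the factors. The bookkeeping of Galois orbits, combined with extracting the explicit constant $\nu(n)$ from the Weil bounds, is where Menichetti's original argument invests most of its labor.

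The final assertion $\nu(3)=0$ exploits the fact that for $n=3$ there are only two orbit patterns to consider --- three linear factors over $\F_q$, or a single orbit of three factors conjugate over $\F_{q^3}$. A direct case analysis of anisotropic ternary cubic norm forms, without invoking any large-$q$ hypothesis, handles each pattern uniformly and leaves no exceptional small fields, yielding the bound $\nu(3)=0$.
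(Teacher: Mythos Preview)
The paper does not prove this theorem; it is quoted from Menichetti \cite{meni1,meni2} and used as a black box, so there is no in-paper proof to compare against. That said, your sketch has a genuine structural gap worth flagging.

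Your proposed dichotomy on the Galois orbit pattern of the absolutely irreducible factors of $N(y)=\det R_y$ does not separate finite fields from generalized twisted fields. For the finite field $\F_{q^n}$ itself, the norm form is $N_{\F_{q^n}/\F_q}(y)$, and over $\overline{\F_q}$ this already splits as a product of $n$ linear forms cyclically permuted by Frobenius --- exactly the pattern you assign to twisted fields. Conversely, a product of $n$ linear forms all defined over $\F_q$ vanishes on each of their $n$ hyperplanes and is never anisotropic for $n\ge 2$, so your ``all factors over $\F_q$'' case is empty rather than being the Desarguesian case. Thus the orbit pattern of the factors of $N$ is the \emph{same} in both target isotopy classes, and the norm form alone is too coarse an invariant: non-isotopic semifields can have linearly equivalent norm forms, because $N$ only records a one-dimensional slice of the full multiplication tensor.

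What Menichetti actually does (in \cite{meni2} for general prime $n$, and by a direct structural argument in \cite{meni1} for $n=3$) is work with the entire algebra, not just its norm. He shows that a semifield of prime degree $n$ over its nucleus is a \emph{cyclic} algebra in the sense that it contains a copy of $\F_{q^n}$ as a maximal subfield once $q$ is large enough, and then analyzes the second generator relative to that subfield; the Lang--Weil input is used to produce such a subfield, not to distinguish orbit types of the norm. Your outline correctly identifies that an algebro-geometric counting argument is the engine, but the object it must be applied to is the variety of ``field-like'' elements (elements generating a commutative associative subalgebra), not the zero locus of $N$. Without that change of target, the argument cannot get off the ground.
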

By \cite[Proposition 11.31]{handbook}, which is essentially due to Albert \cite{albert_iso1,albert_iso2}, a generalized twisted field that has a commutative isotope must be isotopic to the commutative presemifield defined by a planar function $x^{1+p^\alpha}$ over $\F_{p^{e}}$, where $1\le \alpha \le n-1$ and $e/\gcd(e,\alpha)$ is odd.  The following result characterizes planar functions whose associated presemifield is isotopic to a commutative twisted field or a finite field, see \cite[Corollaries 3.9, 3.10]{coulter}.

\begin{lemma}\label{lemma_iso} Let $p$ be an odd prime and $q=p^e$. Let $f$ be a planar function of DO type over $\F_q$ and $S_f=(\F_q,+,*)$ be the associated presemifield with $x*y=f(x+y)-f(x)-f(y)$. There exist linearized permutation polynomials $M_1$ and $M_2$ such that
\begin{enumerate}
\item if $S_f$ is isotopic to a finite field, then $f(M_2(x))\equiv M_1(x^2)$ for $x\in\F_{q}$;
\item if $S_f$ is isotopic to a commutative twisted field, then $f(M_2(x))=M_1(x^{p^\alpha+1})$ for $x\in\F_{q}$, where $\alpha$ is an integer such that $1\le\alpha\le e-1$ and $e/\gcd(e,\alpha)$ is odd.
\end{enumerate}
\end{lemma}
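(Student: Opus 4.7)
The plan is to exploit the correspondence between planar DO functions and commutative presemifields, together with a strong isotopy theorem for the latter.

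First I would observe that since $f$ is a DO polynomial $\sum a_{ij}X^{p^i+p^j}$, direct expansion of $x\ast y:=f(x+y)-f(x)-f(y)$ gives
\[
x\ast y \;=\; \sum_{i,j} a_{ij}\bigl(x^{p^i}y^{p^j}+x^{p^j}y^{p^i}\bigr),
\]
and therefore $x\ast x=2f(x)$, so $f(x)=\tfrac{1}{2}(x\ast x)$ recovers $f$ from the presemifield multiplication on $S_f$. Consequently, the lemma reduces to the assertion that $S_f$ is \emph{strongly} isotopic to $S_g$, where $g(x)=x^2$ in case (1) (whose associated multiplication is $x\ast_g y = 2xy$) and $g(x)=x^{p^\alpha+1}$ in case (2) (whose associated multiplication is $x\ast_g y = x^{p^\alpha}y+xy^{p^\alpha}$); here by ``strongly isotopic'' I mean that there are linear permutations $\ell$, $m$ of $\F_q$ with $\ell(x\ast_f y)=m(x)\ast_g m(y)$. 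Setting $y=x$ in this strong isotopy gives $2\ell(f(x))=2g(m(x))$, so $\ell(f(x))=g(m(x))$, and substituting $x\leftarrow m^{-1}(z)$ yields $f(m^{-1}(z))=\ell^{-1}(g(z))$, whence $M_2:=m^{-1}$ and $M_1:=\ell^{-1}$ are the required linearized permutation polynomials.

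The key ingredient is the strong isotopy theorem for commutative presemifields of odd order (due essentially to Coulter and Henderson, the source of the citation): any isotopy $\ell(x\ast_f y)=m(x)\ast_g n(y)$ between two commutative presemifields can be renormalized to one with $m=n$. Granted this, case (1) follows because the finite field $(\F_q,+,\cdot)$ is isotopic to $S_{g_1}$ with $g_1(x)=x^2$ via $\ell(z)=z/2$, $m=n=\mathrm{id}$, and then the strong-isotopy normalization together with the reduction above produces $f(M_2(x))=M_1(x^2)$. Case (2) is immediate on noting that, by the commutativity assumption and the defining relations of a generalized twisted field, any commutative twisted field is isotopic to $S_{g_2}$ for some $\alpha$ with $1\le\alpha\le e-1$ and $e/\gcd(e,\alpha)$ odd, and then the strong-isotopy normalization yields $f(M_2(x))=M_1(x^{p^\alpha+1})$.

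The main obstacle is the strong-isotopy normalization itself. The commutativity of $\ast_f$ and $\ast_g$ forces the symmetry $m(x)\ast_g n(y)=m(y)\ast_g n(x)$ for all $x,y$; in case~(1), where the target is associative, specializing $y$ to the multiplicative identity of $\ast_g$ immediately gives $n(x)=c\cdot m(x)$ for a constant $c\in\F_q^*$, and $c$ can be absorbed into $\ell$ to produce $m=n$. In case~(2) the target multiplication is non-associative, and one has to exploit the explicit bimonomial form $m(x)^{p^\alpha}n(y)+m(x)n(y)^{p^\alpha}=m(y)^{p^\alpha}n(x)+m(y)n(x)^{p^\alpha}$ to deduce that $n$ differs from $m$ only by a scalar (possibly composed with a Frobenius power compatible with $\alpha$), which can again be absorbed into $\ell$ and $m$; this is the substantive content of the Coulter--Henderson strong isotopy theorem and is where the bulk of the work lies.
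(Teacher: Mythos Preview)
The paper does not supply its own proof of this lemma; it simply cites \cite[Corollaries~3.9,~3.10]{coulter}. Your proposal is a correct sketch of precisely that argument: the identity $x\ast x=2f(x)$ reduces the claim to the strong isotopy theorem for commutative presemifields of odd order, and you have identified that theorem (and its proof outline in the associative and twisted-field cases) accurately. So your approach coincides with the cited source and hence with the paper's.
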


\begin{lemma}\label{lemma_ps} Let  $n$ be an odd prime, $\nu(n)$ be as in Theorem \ref{thm_men}, and let $q$ be an odd prime power such that $q\ge \nu(n)$. Suppose that the orbit of $W=\{x+\delta\cdot L(x):\,x\in\F_{q^n}\}$ under $\la\Theta(\beta^2)\ra$ forms a partial spread, where $\delta^{q^n-1}=-1$ and $L(X)$ is a $q$-polynomial over $\F_{q^n}$. Then $W=\{\alpha\cdot(x+u\delta x^{q^i}):\, x\in\F_{q^n}\}$ for some $\alpha\in\F_{q^{2n}}^*$, $u\in\F_{q^n}$ and $0\le i\le n-1$.
\end{lemma}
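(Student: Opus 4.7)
The plan is to invoke Menichetti's classification to constrain the polynomial $Q(X) = X^2 - w L(X)^2$, where $w := \delta^2 \in \F_{q^n}$ is a nonsquare (since $\delta^{q^n-1} = -1$ forces $\delta^2 \in \F_{q^n}$ while $\delta \notin \F_{q^n}$), and then to read off the monomial structure of $L$ from a direct comparison of DO coefficients. By Lemma \ref{lemma_key}(1), $Q$ is planar on $\F_{q^n}$, and its associated commutative presemifield $S_Q = (\F_{q^n}, +, *)$ with $x * y = 2(xy - wL(x)L(y))$ is $\F_q$-bilinear (since $L$ is $\F_q$-linear); consequently the translation plane coordinatized by $S_Q$ admits $\F_q$ in its kernel, and the dimension of $S_Q$ over its nucleus is either $1$ or $n$, which is prime. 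Menichetti's Theorem \ref{thm_men} (valid because $q \ge \nu(n)$) combined with Lemma \ref{lemma_iso} then produces linearized permutation polynomials $M_1, M_2$ over $\F_{q^n}$ and an integer $\alpha$, $0 \le \alpha \le en-1$, such that $Q(M_2(y)) = M_1(y^{p^\alpha+1})$, where $q = p^e$.

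I refine this exponent next. Since $Q$ is $\F_q$-homogeneous of degree two, applying $y \mapsto \lambda y$ with $\lambda \in \F_q^*$ and using $M_1(cz) = cM_1(z)$ for $c \in \F_q$ yields $\lambda^{p^\alpha+1} = \lambda^2$ for all $\lambda \in \F_q^*$, so that $(q-1) \mid (p^\alpha - 1)$, whence $e \mid \alpha$. Writing $\alpha = e\beta$, the equation becomes $Q(M_2(y)) = M_1(y^{q^\beta+1})$. To rule out $\beta \ne 0$, set $A(y) := M_2(y) = \sum_j a_j y^{q^j}$ and $B(y) := L(M_2(y)) = \sum_j b_j y^{q^j}$, so that $A(y)^2 - wB(y)^2 = M_1(y^{q^\beta+1})$. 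As a reduced DO polynomial the right-hand side is a sum of terms $y^{q^a + q^b}$ with $a - b \equiv \beta \pmod n$, so for $\beta \ne 0$ no diagonal term $y^{2q^i}$ occurs and one must have $a_i^2 = w b_i^2$ for every $i$. Since $w$ is a nonsquare in $\F_{q^n}$ and $a_i, b_i \in \F_{q^n}$, this forces $a_i = b_i = 0$ for all $i$, contradicting that $M_2$ is a permutation; hence $\beta = 0$ and $Q(M_2(y)) = M_1(y^2)$.

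With $\beta = 0$, comparing off-diagonal coefficients gives $a_i a_j = w b_i b_j$ for all $i \ne j$. A short argument (once more using that $w$ is a nonsquare in $\F_{q^n}$) shows that the support $S = \{j : (a_j, b_j) \ne (0,0)\}$ satisfies $|S| \le 2$. If $|S| = 1$, say $S = \{i_0\}$, then $W = (a_{i_0} + \delta b_{i_0}) \cdot \F_{q^n}$, which has the desired shape with $u = 0$ and $i = 0$. If $|S| = 2$, say $S = \{i_0, i_1\}$, there are two sub-cases: either $A$ and $B$ share both indices (constrained by $a_{i_0} a_{i_1} = w b_{i_0} b_{i_1}$), or $A$ and $B$ are each monomial at distinct indices. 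In every sub-case the change of variable $z = y^{q^{i_0}}$ puts $W$ in the form $\alpha_0 \cdot \{z + (\alpha_1/\alpha_0) z^{q^{i_1-i_0}} : z\}$ with $\alpha_j := a_{i_j} + \delta b_{i_j}$, and the constraint forces $\alpha_1/\alpha_0 \in \delta \cdot \F_{q^n}$ by direct computation (in the shared-support sub-case one checks that the constraint makes the $\F_{q^n}$-part of the numerator $a_{i_0}a_{i_1} - w b_{i_0} b_{i_1}$ vanish), yielding $W = \alpha \cdot \{x + u \delta x^{q^i} : x\}$.

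The principal obstacle is the middle step---ruling out $\beta \ne 0$ by the diagonal argument. The $\F_q$-homogeneity refinement is crucial here, as it upgrades the exponent from $p^\alpha + 1$ to $q^\beta + 1$, aligning the Frobenius shifts into a clean cyclic pattern modulo $n$. The conclusion then hinges on the fact that $w$ is a nonsquare \emph{in $\F_{q^n}$} itself, not merely in $\F_{q^{2n}}$ (where of course $w = \delta^2$ is a square).
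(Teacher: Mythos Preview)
Your argument has a genuine gap at the ``refinement'' step. Lemma~\ref{lemma_iso} (Coulter--Henderson) produces linearized permutation polynomials $M_1,M_2$ over $\F_{q^n}=\F_{p^{en}}$, but these are only guaranteed to be $\F_p$-linear, i.e.\ $p$-polynomials. You invoke $M_1(cz)=cM_1(z)$ for $c\in\F_q$---and, implicitly, the same for $M_2$ when you use the $\F_q$-homogeneity of $Q$---to conclude $e\mid\alpha$, but neither $\F_q$-linearity is available from the lemma. The fact that the presemifield $S_Q$ is $\F_q$-bilinear does tell you that the nucleus of the associated semifield contains a copy of $\F_q$, but this does not force the particular isotopy maps $M_1,M_2$ to be $\F_q$-linear. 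Consequently the expansions $A(y)=\sum_j a_j y^{q^j}$ and $B(y)=\sum_j b_j y^{q^j}$ as $q$-polynomials are unjustified, and your diagonal/off-diagonal coefficient comparisons (which are indexed by $q$-powers) do not go through as written.

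The paper's proof avoids this entirely by working with $p$-polynomials from the start: write $M_2(X)=\sum_{i=0}^{ne-1}a_iX^{p^i}$ and $L(M_2(X))\equiv\sum_{i=0}^{ne-1}b_iX^{p^i}$. In the twisted-field case ($1\le\alpha\le ne-1$), the monomial $X^{2p^i}$ never appears in $M_1(X^{1+p^\alpha})$, so comparing diagonal coefficients yields $a_i^2=\delta^2 b_i^2$ for every $i$; since $\delta^2$ is a nonsquare in $\F_{q^n}$ this forces all $a_i=b_i=0$, a contradiction. Thus one lands directly in the finite-field case $\alpha=0$ with no need for any $e\mid\alpha$ refinement. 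The off-diagonal analysis $a_ia_j=\delta^2 b_ib_j$ ($i\ne j$) and the support argument then proceed exactly as you outline, but with $p$-powers throughout. The $\F_q$-linearity enters only at the very last line: once $W=\{\alpha\cdot(x+u\delta x^{p^{\ell-k}})\}$ is obtained, the fact that $W$ is an $\F_q$-subspace forces $p^{\ell-k}$ to be a power of $q$. Your overall strategy is the same as the paper's; the fix is simply to drop the unjustified refinement and carry $p$-powers to the end.
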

\begin{proof}
By Lemma \ref{lemma_key}, $Q(x)=x^2-\delta^2L(x)^2$ is a planar function. By Theorem \ref{thm_men}, the associated presemifield $S_Q$ is isotopic to a finite field or a commutative twisted field under the conditions in the lemma. By Lemma \ref{lemma_iso}, there are reduced linearized permutation polynomials $M_1,M_2$ such that
\begin{equation}\label{eqn_M12ff}
M_1(X^2)\equiv M_2(X)^2-\delta^2 L(M_2(X))^2 \pmod{X^{q^n}-X},
\end{equation}
or
\begin{equation}\label{eqn_M12tw}
M_1(X^{1+p^\alpha})\equiv M_2(X)^2-\delta^2 L(M_2(X))^2 \pmod{X^{q^n}-X},
\end{equation}
where $q=p^e$ with $p$ prime, $1\le\alpha\le ne-1$ and $\frac{ne}{\gcd(ne,\alpha)}$ is odd. Write $M_2(X)=\sum_{i=0}^{ne-1}a_iX^{p^i}$, $L(M_2(X))\equiv\sum_{i=0}^{ne-1}b_iX^{p^i}\pmod{X^{q^n}-X}$, and set $I=\{i:\,a_i\ne 0\}$, $J=\{i:\, b_i\ne 0\}$.

Comparing coefficients of $x^{2p^i}$ in \eqref{eqn_M12tw}, we see that $0=a_i^2-\delta^2 b_i^2$ for $0\le i\le ne-1$. Since $\delta^2$ is a nonsquare in $\F_{q^n}^*$, all the $a_i$'s and $b_j$'s are zero. This contradicts the assumption that $M_2$ is a permutation polynomial, so \eqref{eqn_M12tw} can not occur. It remains to check \eqref{eqn_M12ff}. We look at the coefficients of $x^{p^i+p^j}$, $0\le i<j\le ne-1$, on both sides and get $a_ia_j=\delta^2 b_ib_j$. If $|I|\ge 2$ or $|J|\ge 2$, then it is easy to deduce that $I=J$. If both $I$ and $J$ have size at most $1$, then $L$ is a monomial and $W$ takes the desired form with $\alpha=1$. We assume that $I=J$ and they have size at least two below. In this case, for any distinct $i,j\in I$, exactly one of $\{a_ib_i^{-1},\,a_jb_j^{-1}\}$ is a square and the other is a nonsquare, since $\delta^2$ is a nonsquare in $\F_{q^n}^*$. This is only possible when $|I|=2$. Therefore,
$M_2(X)=aX^{p^k}+bX^{p^\ell}$, $L(M_2(X))=cX^{p^k}+dX^{p^\ell}$ for some $0\le k<\ell\le ne-1$ and $a,b,c,d\in\F_{q^n}^*$ such that $ab=\delta^2 cd$. Since $x\mapsto M_2(x)$ is a permutation, the elements of $W$ can be written as
\[
M_2(x)+\delta\cdot L(M_2(x))=(a+\delta c)\cdot(y+\delta da^{-1}y^{p^{\ell-k}}),\;y=x^{p^k}.
\]
Since $W$ is a $\F_q$-linear subspace, $p^{\ell-k}$ is a power of $q$.
This completes the proof.
\end{proof}

\begin{remark}\label{remark_size} Let $q$ be an odd prime power, $w$ be a nonsquare of $\F_q$, and $N>1$ be an integer. Let ${\bf a}=(a_0,a_1\cdots,a_{N-1})$ and ${\bf b}=(b_0,b_1,\cdots,b_{N-1})$ be two sequences consisting of elements in $\F_q$, and define their supports as $I_1=\{i:\,a_i\ne 0\}$, $I_2=\{i:\,b_i\ne 0\}$ respectively. As in the proof of Lemma \ref{lemma_ps}, we can show that: if   $a_ia_j=wb_ib_j$ for any distinct $i,\,j$, then either both $|I_1|$ and $|I_2|$ have size at most one or $I_1=I_2$ and both have size $2$.
\end{remark}

\begin{thm}\label{thm_typeC}
Let  $n$ be an odd prime, $\nu(n)$ be as in Theorem \ref{thm_men}, and let $q$ be an odd prime power such that $q\ge \nu(n)$. A type $\cC$ spread $\cS$ of $ (\F_{q^{2n}},+)$ with kernel $\F_q$ is isomorphic to the orbit of
$W=\{x+\delta\cdot x^{q^i}:\,x\in\F_{q^n}\}$ under $\la\Theta(\beta)\ra$ for some $\delta$ and $i$ such that $\delta^{q^n-1}=-1$, $1\le i\le n-1$ and $\gcd(i,n)=1$.
\end{thm}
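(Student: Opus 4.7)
The plan is to combine Lemma~\ref{lemma_ps} with a normalization argument inside the Singer group; the structural content is already extracted by that lemma and the present theorem is essentially a packaging of it.

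Let $\cS$ be a type $\cC$ spread, so $\Aut(\cS)\cap\la\Theta(\gamma)\ra=\la\Theta(\beta)\ra$ contains $\la\Theta(\beta^2)\ra$. Pick any component $W$ of $\cS$. Since $\la\Theta(\beta^2)\ra\le\Aut(\cS)$, the hypothesis of Lemma~\ref{lemma_delta} is satisfied, and by the Remark following it we may replace $\cS$ by an isomorphic spread so as to bring $W$ into the form $W=\{x+\delta L(x):x\in\F_{q^n}\}$ with $\delta^{q^n-1}=-1$ and $L(X)$ a reduced $q$-polynomial. The orbit of this $W$ under $\la\Theta(\beta^2)\ra$ sits inside $\cS$, hence forms a partial spread, so the hypotheses of Lemma~\ref{lemma_ps} are met (recall $n$ is an odd prime and $q\ge\nu(n)$), yielding
\[
W=\{\alpha(x+u\delta x^{q^i}):x\in\F_{q^n}\}
\]
for some $\alpha\in\F_{q^{2n}}^*$, $u\in\F_{q^n}$ and $0\le i\le n-1$.

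Next I normalize by the map $\Theta(\alpha^{-1})\in\Gamma L(1,q^{2n})$. Because the Singer group $\la\Theta(\gamma)\ra$ is abelian, conjugation by $\Theta(\alpha^{-1})$ fixes $\la\Theta(\beta)\ra$ pointwise; consequently $\Theta(\alpha^{-1})\cS$ is an isomorphic spread, still of type $\cC$ with the same cyclic subgroup, whose new component is $\{x+u\delta x^{q^i}:x\in\F_{q^n}\}$. Setting $\delta':=u\delta$ (granted $u\ne 0$, justified below), we have $u\in\F_{q^n}^*$ so that $(\delta')^{q^n-1}=u^{q^n-1}\delta^{q^n-1}=-1$, putting the spread into the desired form.

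It remains to rule out the degenerate cases $u=0$ and $i=0$. In either case the component takes the shape $\alpha'\F_{q^n}$ for some $\alpha'\in\F_{q^{2n}}^*$ (namely $\alpha'=\alpha$ when $u=0$, and $\alpha'=\alpha(1+u\delta)$ when $i=0$), which is a one-dimensional $\F_{q^n}$-subspace of $\F_{q^{2n}}$; then every translate $\Theta(\beta^j)W=\beta^j\alpha'\F_{q^n}$ is likewise a one-dimensional $\F_{q^n}$-subspace. Hence each component of $\cS$ is $\F_{q^n}$-invariant, forcing the kernel of $\cS$ to contain $\F_{q^n}\supsetneq\F_q$, contradicting the assumption that the kernel equals $\F_q$. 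Therefore $u\ne 0$ and $1\le i\le n-1$, and since $n$ is prime this automatically gives $\gcd(i,n)=1$. The only delicate point in the argument, beyond invoking Lemma~\ref{lemma_ps}, is verifying that the normalization by $\Theta(\alpha^{-1})$ preserves both the type $\cC$ structure and the distinguished cyclic subgroup; this follows from the abelianness of the Singer group.
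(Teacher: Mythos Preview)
Your proof is correct and follows essentially the same route as the paper: normalize the component via Lemma~\ref{lemma_delta} and its Remark, apply Lemma~\ref{lemma_ps}, then absorb the $\alpha$ and $u$ into the spread and into $\delta$. Your treatment is in fact slightly more careful than the paper's, which silently passes from $u\in\F_{q^n}$ (as Lemma~\ref{lemma_ps} actually gives) to $u\in\F_{q^n}^*$ and handles $i=0$ only implicitly via the fixed-subfield kernel argument; your explicit exclusion of $u=0$ and $i=0$ by observing that $W$ would then be a one-dimensional $\F_{q^n}$-subspace is a clean way to close both gaps at once.
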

\begin{proof}
Let $W$ be a component of $\cS$ such that $\cS$ is the orbit of $W$ under the Singer subgroup $\la\Theta(\beta)\ra$. By the remark following Lemma \ref{lemma_delta}, up to isomorphism $W=\{x+\delta' L(x):\,x\in\F_{q^n}\}$ for some $q$-polynomial $L(X)$ and $\delta'$ such that $\delta'^{q^n-1}=-1$. By Lemma \ref{lemma_ps}, $W=\{\alpha\cdot(x+u\delta' x^{q^i}):\, x\in\F_{q^n}\}$ for some $\alpha\in\F_{q^{2n}}^*$ and $u\in\F_{q^n}^*$. Its orbit under $\la\Theta(\beta)\ra$ is a spread  isomorphic to the one described in the theorem with $\delta=u\delta'$. The kernel contains the fixed subfield of $x\mapsto x^{q^i}$, so we have $\gcd(i,n)=1$.
\end{proof}

\begin{thm}\label{thm_typeH}
Let  $n$ be an odd prime, $\nu(n)$ be as in Theorem \ref{thm_men}, and let $q$ be an odd prime power such that $q\ge \nu(n)$. A type $\cH$ spread $\cS$ of $(\F_{q^{2n}},+)$ with kernel $\F_q$ is isomorphic to the orbit of $\{x+\delta x^{q^k}:\,x\in\F_{q^n}\}$ under the group $A$ generated by $\la\Theta(\beta^2)\ra$ and $\psi:\,z\mapsto \eta z^{q^n}$, where $1\le k\le n-1$, $\gcd(k,n)=1$, $\delta^{q^n-1}=-1$,  $\eta^{(1+q^n)(q^k-1)}=1$ and $\eta$ is a nonsquare.
\end{thm}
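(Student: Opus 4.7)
The plan mirrors the strategy of Theorem~\ref{thm_typeC}: use Lemma~\ref{lemma_ps} to pin down the monomial shape of the chosen component $W$, then exploit the two-orbit structure of $\la\Theta(\beta^2)\ra$ on $\cS$ to produce an extra generator $\psi$ with Frobenius part $z\mapsto z^{q^n}$, and finally extract the arithmetic constraints on its scalar coefficient. By Lemma~\ref{lemma_key}(1), the $\la\Theta(\beta^2)\ra$-orbit of $W$ is a partial spread, so $Q(X)$ is a planar DO polynomial; Lemma~\ref{lemma_delta} (and its following remark) lets us replace $\cS$ by an isomorphic spread with $\delta^{q^n-1}=-1$, and Lemma~\ref{lemma_ps} then yields $W=\{\alpha(x+u\delta x^{q^k}):x\in\F_{q^n}\}$ for some $\alpha\in\F_{q^{2n}}^*$, $u\in\F_{q^n}^*$ and $1\le k\le n-1$. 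Conjugating $\Aut(\cS)$ by $\Theta(\alpha^{-1})$ and absorbing $u$ into $\delta$ (which remains a solution of $\delta^{q^n-1}=-1$ since $u\in\F_{q^n}^*$) gives $W=\{x+\delta x^{q^k}:x\in\F_{q^n}\}$, and the kernel being exactly $\F_q$ forces $\gcd(k,n)=1$.

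Since $\la\Theta(\gamma)\ra$ is normal in $\Gamma L(1,q^{2n})$, the Singer part $\la\Theta(\beta^2)\ra=\Aut(\cS)\cap\la\Theta(\gamma)\ra$ is normal in $\Aut(\cS)$, and the quotient embeds into the cyclic group $\Aut(\F_{q^{2n}})$ of order $2ne$. Transitivity of $\Aut(\cS)$ on the $q^n+1$ components combined with $\la\Theta(\beta^2)\ra$ having two orbits of size $(q^n+1)/2$ forces this quotient to have even order, hence to contain the unique involution $\sigma^{ne}:z\mapsto z^{q^n}$. Lifting, we obtain $\psi:z\mapsto\eta z^{q^n}$ in $\Aut(\cS)$ with $\psi^2=\Theta(\eta^{1+q^n})\in\la\Theta(\beta^2)\ra$; the intersection calculation below will rule out the possibility that $\sigma^{ne}$ fixes both $\la\Theta(\beta^2)\ra$-orbits, so setting $A:=\la\la\Theta(\beta^2)\ra,\psi\ra$, the $A$-orbit of $W$ equals $\cS$.

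From $\delta^{q^n}=-\delta$ and $x^{q^n}=x$ on $\F_{q^n}$ one computes $\psi(W)=\eta\cdot\{x-\delta x^{q^k}:x\in\F_{q^n}\}=:\eta W''$. Firstly, $\psi^2\in\la\Theta(\beta^2)\ra$ together with $\la\beta^2\ra\cap\F_{q^n}^*=\F_q^*$ (which uses $n$ odd prime so that $\F_{q^2}\cap\F_{q^n}=\F_q$) gives $\eta^{1+q^n}\in\F_q^*$, equivalently $\eta^{(1+q^n)(q^k-1)}=1$ via $\gcd(k,n)=1$. Secondly, the spread condition amounts to $W\cap\alpha W''=\{0\}$ for every $\alpha\in\eta\la\beta^2\ra$; taking the relative norm $z\mapsto z^{1+q^n}$ of any identity $x+\delta x^{q^k}=\alpha(y-\delta y^{q^k})$ converts this into $Q(x)\ne\alpha^{1+q^n}Q(y)$ for all nonzero $x,y\in\F_{q^n}$, where $Q(z)=z^2-\delta^2 z^{2q^k}$ is the planar function from above. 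By \cite[Proposition 3.6]{weng}, $Q(x)=cQ(y)$ admits nonzero solutions iff $c$ is a square of $\F_q^*$; since $\alpha^{1+q^n}$ runs over the coset $\eta^{1+q^n}(\F_q^*)^2\subseteq\F_q^*$, disjointness is equivalent to $\eta^{1+q^n}$ being a non-square of $\F_q^*$, which in turn (because $(q^n-1)/(q-1)$ is odd for $n$ odd prime) is equivalent to $\eta$ being a non-square of $\F_{q^{2n}}^*$.

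The principal technical obstacle is the second item above: pushing the multi-parameter disjointness condition through the relative norm to the single identity $Q(x)=\alpha^{1+q^n}Q(y)$, and then invoking Weng's image description of a planar DO function to pin the conditions on $\eta$ down to the clean arithmetic statements $\eta^{(1+q^n)(q^k-1)}=1$ and ``$\eta$ non-square''. Both halves of the analysis use the hypothesis that $n$ is an odd prime in an essential way, via $\F_{q^2}\cap\F_{q^n}=\F_q$ and the parity of $(q^n-1)/(q-1)$.
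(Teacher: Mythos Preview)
Your overall plan follows the paper's: normalize $W$ via Lemma~\ref{lemma_ps}, then locate an extra generator $\psi$ and extract the arithmetic on $\eta$. The difference is in how you produce $\psi$. The paper starts from \emph{any} orbit-swapping element $\psi(z)=\eta z^{p^\ell}$ (which exists by transitivity), reduces to $\bar\psi$ of $2$-power order, and then through the chain of congruences around \eqref{eqn_eta_del} forces $\ell=ne$, i.e.\ Frobenius part $q^n$. You instead argue group-theoretically that the quotient contains the unique involution $z\mapsto z^{q^n}$, lift it to $\psi$, and then assert that ``the intersection calculation below will rule out the possibility that $\sigma^{ne}$ fixes both $\la\Theta(\beta^2)\ra$-orbits''. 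This is where the gap lies.

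Your intersection calculation, via the relative norm and \cite[Proposition~3.6]{weng}, establishes the equivalence
\[
\bigl(\exists\,\alpha\in\eta\la\beta^2\ra:\ W\cap\alpha W''\ne\{0\}\bigr)\ \Longleftrightarrow\ \eta^{1+q^n}\ \text{is a square in }\F_q^*.
\]
(Incidentally, you only state the forward direction of the norm step; the backward direction also needs the observation that $(u/v)^{1+q^n}\in(\F_q^*)^2$ forces $u/v\in\la\beta^2\ra$, since $\la\beta^{q-1}\ra\subseteq\la\beta^2\ra$.) But this equivalence does \emph{not} rule out the case that $\psi$ fixes both orbits: in that case $\psi(W)=\beta^{2j}W$ for a single $j$, so $W\cap\alpha W''\ne\{0\}$ for exactly one $\alpha$, hence $\eta^{1+q^n}$ is a square, which is perfectly consistent. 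No contradiction arises from your calculation alone, and your derivation of ``$\eta$ nonsquare'' is therefore conditional on case A (swapping) already holding.

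The gap is easily repaired. If $\psi$ fixed the orbit of $W$, then $\alpha_0 W''=W$ for some $\alpha_0$. Writing $\alpha_0=a+b\delta$ with $a,b\in\F_{q^n}$ and comparing coefficients in $\alpha_0(y-\delta y^{q^k})=x+\delta x^{q^k}$ (using that $1,q^k,q^{2k}$ are distinct residues modulo $n$ since $n\ge 3$ is odd and $\gcd(k,n)=1$) forces $b=0$ and $a^{q^k-1}=-1$. But $\gcd(q^k-1,q^n-1)=q-1$ does not divide $(q^n-1)/2$ when $n$ is odd, so no such $a$ exists. Hence $\psi$ must swap the orbits, and the rest of your argument goes through. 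Alternatively, you can follow the paper and start from an orbit-swapping $\psi$ with unspecified Frobenius part, then compute as in the paper's treatment of condition~(1) to force it to be $q^n$.
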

\begin{proof} Write $q=p^e$ with $p$ prime. By Lemma \ref{lemma_ps}, under the conditions in the theorem a spread $\cS$ of type $\cH$  is  isomorphic to the orbit of $W=\{x+\delta\cdot x^{\tau}:\,x\in\F_{q^n}\}$ under $\Aut(\cS)$, where $\delta^{q^n-1}=-1$, $\tau=q^k$ with $1\le k\le n-1$ and $\gcd(k,n)=1$, and $\Aut(\cS)\cap\la\Theta(\gamma)\ra=\la\Theta(\beta^2)\ra$.
There exists  $\psi\in\Aut(\cS)$ that permutes the two $\la\Theta(\beta^2)\ra$-orbits by the  transitivity of $\Aut(\cS)$ on the components, so the spread $\cS$ consists of the orbit of $W$ under the subgroup $A:=\la\Theta(\beta^2),\,\psi\ra$. By elementary group theory,
\[
\Aut(\cS)/\la\Theta(\beta^2)\ra\cong \Aut(\cS)\cdot\la\Theta(\gamma)\ra/\la\Theta(\gamma)\ra\le
\la\Theta(\gamma)\ra\rtimes \Aut(\F_{q^{2n}})/\la\Theta(\gamma)\ra
\cong \Aut(\F_{q^{2n}}).
\]
Since any odd power of $\psi$ also permutes the two $\la\Theta(\beta^2)\ra$-orbits, we can assume that the order of $\overline{\psi}\in\Aut(\cS)/\la\Theta(\beta^2)$ is a power of $2$. Write $\psi(z)=\eta z^\sigma$, where $\sigma=p^\ell$ ($1\le \ell\le 2ne-1$) and $\eta\in\F_{q^{2n}}^*$. We now derive conditions on $\psi$ to guarantee that the orbit of $W$ under the subgroup $A$ forms a spread. To be specific, we need to make sure the following hold:
\begin{enumerate}
\item $\psi^2(W)=\beta^{2i}\cdot W$ for some $i$, since $\psi^2$ stabilizes the $\la\Theta(\beta^2)\ra$-orbits;
\item $\psi(W)$ intersects each of $\beta^{2i}\cdot W$, $0\le i\le \frac{q^n-1}{2}$, trivially.
\end{enumerate}

We fist consider the condition (1). Assume that $\psi^2(W)=\beta^{2i}\cdot W$ for some $i$. This means that for each $x\in\F_{q^n}$, there exists $y\in\F_{q^n}$ such that
$\eta^{1+\sigma}\left(x+\delta x^\tau\right)^{\sigma^2}=\beta^{2i}(y+\delta y^\tau).$
Write $\eta^{1+\sigma}\beta^{-2i}=u+v\delta$ for some $u,v\in\F_{q^n}$. Expanding and comparing the coefficients of the basis $\{1,\delta\}$, we get
$y=ux^{ \sigma^2}+v\delta^{\sigma^2+1}x^{\tau\sigma^2}$, $ y^\tau=u\delta^{\sigma^2-1}x^{\tau\sigma^2}+vx^{ \sigma^2}$.
Canceling out $y$, we see that $(u^\tau-u\delta^{\sigma^2-1})x^{\tau\sigma^2}+v^\tau\delta^{\tau(\sigma^2+1)}x^{\tau^2\sigma^2}
-vx^{\sigma^2}=0$ for all $x$. Hence, the reduced polynomial
$(u^\tau-u\delta^{\sigma^2-1})X^{\tau\sigma^2}+v^\tau\delta^{\tau(\sigma^2+1)}X^{\tau^2\sigma^2}
-vX^{ \sigma^2}\pmod{X^{q^n}-X}$ is the zero polynomial.
Since $\tau\ne 1$ and $n$ is odd, the reduced monomial $X^{\sigma^2}\pmod{X^{q^n}-X}$ occurs only once.  This gives $v=0$, and the remaining coefficient gives $u^{\tau-1}=\delta^{\sigma^2-1}$. Raising both sides of $\eta^{1+\sigma}\beta^{-2i}=u$ to the $\frac{1}{2}(q^n+1)(\tau-1)$-st power, we get  $\eta^{(q^n+1)(\tau-1)(1+\sigma)/2}=u^{\tau-1}$, so it must hold that
\begin{equation}\label{eqn_eta_del}
\eta^{(1+q^n)(\tau-1)(1+\sigma)/2}=\delta^{\sigma^2-1}.
\end{equation}
Conversely, if \eqref{eqn_eta_del} is true, then (1) holds with  $\beta^{2i}=\eta^{ (1-q^n) (1+\sigma)/2}$ by direct check. To summarize, we have shown that  (1) holds if and only if \eqref{eqn_eta_del} holds.

We claim that  $\gcd(e,\ell)=r$, $\delta_0^{(p^r-1)/2}=-1$ and $\eta^{(1+q^n)(\tau-1)}=\delta^{2(\sigma-1)}$, where $r=\gcd(e,2\ell)$, and $\delta_0=\delta^{2(q^n-1)/(q-1)}$.  By raising both sides of  \eqref{eqn_eta_del} to the $\left(\frac{q^n-1}{q-1}\right)$-th power we get  $\delta_0^{(p^{2\ell}-1)/2}=1$. It follows from $\gcd(p^{2\ell}-1,p^e-1)=p^r-1$ that $\delta_0^{p^r-1}=1$. On the other hand, $\delta^2$ is a nonsquare in $\F_{q^n}^*$, so $\delta_0^{(p^e-1)/2}=-1$.
Now, we see that $\delta_0$ has order dividing $\gcd(\frac{p^{2\ell}-1}{2},p^r-1)$ which is equal to either $p^r-1$ or $(p^r-1)/2$. The latter case will not occur, since it would lead to the contradiction $\delta_0^{(p^e-1)/2}=1$. Hence $p^r-1$ divides $\frac{p^{2\ell}-1}{2}$, which is the case only if $2\ell/r$ is even, i.e., $r|\ell$. This shows that $r=\gcd(e,\ell)$ and $\delta_0^{(p^r-1)/2}=-1$. It is clear that $g:=\gcd\left(\frac{q^n-1}{q-1},\frac{\sigma+1}{2}\right)$ divides $\gcd(p^{ne}-1,p^{2\ell n}-1)=p^{\gcd(ne,2\ell n)}-1=p^{nr}-1$, so it also divides $\gcd(p^{n\ell}-1,p^\ell+1)=2$. On the other hand, $\frac{q^n-1}{q-1}$ is odd, so $g=1$. It follows from \eqref{eqn_eta_del} that $\eta^{(1+q^n)(\tau-1)}=\delta^{2(\sigma-1)}$.

We now consider the condition (2). Recall that $\la\beta\ra$ is the set of elements of $\F_{q^{2n}}^*$ whose relative norm to $\F_{q^n}$ is in $\F_q^*$. The condition amounts to that $\eta^{1+q^n}Q(x)^\sigma=\lambda^2Q(y)$ does not hold for any $x,y\in\F_{q^n}^*$ and $\lambda\in\F_q^*$, where $Q$ is as defined in \eqref{eqn_Q}. By expanding $\eta^{1+q^n}Q(x)^\sigma=\lambda^2Q(y)$ and rearranging terms, we get $Y=\delta^2Y^\tau$ with $Y=\eta^{1+q^n}x^{2\sigma}-\lambda^2y^2\in\F_{q^n}$. Here we have made use of the fact that $\eta^{(1+q^n)(\tau-1)}=\delta^{2(\sigma-1)}$. If $Y\ne 0$, then $Y^{1-\tau}$ is  a square  while $\delta^2$ is not, which is impossible. We thus must have $Y=0$. It is clear that $Y=0$ has a  solution  $(x,y,\lambda)\in\F_{q^n}^*\times\F_{q^n}^*\times\F_q^*$ if and only if $\eta^{1+q^n}$ is a square in $\F_{q^n}^*$. To summarize,  (2) holds only if $\eta^{1+q^n}$ is a nonsquare in $\F_{q^n}^*$.

Let $e'$ be the highest power of $2$ that divides $e$. It follows from $r=\gcd(e,\ell)=\gcd(e,2\ell)$ that $e/r$ is odd, i.e., $e'|r$. Hence $\ell$ is a multiple of $e'$. Recall that the order of $\overline{\psi}$ is a power of $2$, so $\ell$ is a multiple of $ne$. On the other hand, $0<\ell<2ne$ implies that $\ell=ne$. In this case, $r=e$, and the conditions reduce to the same as stated in the theorem.

The sufficiency part of the theorem is shown in \cite{kantor_odd,ks}.
\end{proof}

The spreads described in Theorem \ref{thm_typeC} and Theorem \ref{thm_typeH} are due to Kantor and Suetake \cite{kantor_odd,ks}. The case $n=3$ has been characterized by Baker and Ebert et al in a series of papers \cite{be_last,be_baer2,be_baer,be_x} by a geometric approach. It is well-known that a two-dimensional finite semifield is a finite field   \cite{dickson_2dim}, so the same arguments in this section can be applied to characterize the case $n=2$ which has been dealt with in \cite{be_2dim}.

\section{Planar functions of the form $L(x)^2-wx^2$}

In Section \ref{sect_function}, we have shown that $\cC$-planes and $\cH$-planes of odd order have close connections with planar functions of the form $X^2-\delta^2 L(X)^2$ over $\F_{q^n}$, where $\delta^{q^n-1}=-1$. In this section, we study the properties of the commutative semifields associated with such planar functions and determine the planarity of functions of particular forms. Throughout this section, we shall fix the following notation. For convenience, we write alternatively $Q(X)=L(X)^2-wX^2$, where $w(=\delta^{-2})$ is a nonsquare in $\F_{q^n}$ and $L(X)$ is a reduced $q$-polynomial such that
\begin{equation}\label{eqn_L_ker}
\{\lambda\in\F_{q^n}:\,L(\lambda x)=\lambda L(x) \textup{ for all } x \in\F_{q^n}\}=\F_q,
\end{equation}
Assume that $Q(X)$ is a planar function over $\F_{q^n}$. Then
\[
x\mapsto\,\frac{1}{2}\left(Q(x+1)-Q(x)-Q(1)\right)=L(1)L(x)-wx
\]
is a $\F_q$-linear bijection of $\F_{q^n}$ by definition. Let $L_1$ be its inverse under composition, which is also $\F_q$-linear. Denote by $S_Q=(\F_{q^n},+,\circ)$ the associated semifield with multiplication
\begin{equation}\label{eqn_circ}
x\circ y=L(L_1(x))L(L_1(y))-wL_1(x)L_1(y),
\end{equation}
The multiplicative identity is $e_Q=L(1)^2-w$, and $\cN(S_Q)$ contains $\F_q\cdot e_Q$.  We have  $(ae_Q)\circ z=az$ for $a\in \F_q$ and $z\in\F_{q^n}$. Let $q^m$ and $q^r$ be the sizes of $\cN_m(S_Q)$ and $\cN(S_Q)$ respectively.

We now explicitly construct a commutative isotope of $S_Q$ that has $\F_{q^m}$ as the middle nucleus.  For each $x\in S_Q$, let $R_x$ be the $\F_q$-linear map over $(\F_{q^n},+)$ such that $R_x(z)=z\circ x$. For  $u\in \cN_m(S_Q)$ and $i\ge 1$, we use $u^{\circ i}$ for the product of $i$ copies of $u$ under the multiplication $\circ$, and set $u^{\circ 0}:=e_Q$.   Let $z'$ be a fixed primitive element of $\F_{q^m}$ with minimal polynomial $X^m+\sum_{i=0}^{m-1}c_iX^i$ over $\F_q$. The map $a\mapsto ae_Q$ is a field isomorphism between $(\F_q,+,\cdot)$ and $(\F_q\cdot e_Q,+,\circ)$, and it naturally extends to a ring isomorphism between their polynomial rings. Therefore, there exists a primitive element $z\in\cN_m(S_Q)$ such that $z^{\circ m}+\sum_{i=0}^{m-1}(c_ie_Q)\circ z^{\circ i}=0$.  Let $\{f_1=e_Q,f_2\cdots ,f_{n/m}\}$ and $\{f_1'=1,f_2'\cdots ,f_{n/m}'\}$ be a basis of $(\F_{q^n},+)$ over $\cN_m(S_Q)$ and $\F_{q^m}$ respectively. Now define a $\F_q$-linear map $\psi:\,(\F_{q^n},+)\mapsto (\F_{q^n},+)$ such that
\[
\psi(z'^if_j')=z^{\circ i}\circ f_j,\quad 0\le i\le m-1 , \;1\le j\le n/m.
\]
Here, $\F_q$-linearity means that $\psi(\lambda x)=\lambda\psi(x)$ for  $\lambda\in\F_q$ and $x\in\F_{q^n}$.  The map $\psi$ is a bijection and has the properties: (1) The restriction $\psi|_{\F_{q^m}}$ is a field isomorphism between $\F_{q^m}$ and $\cN_m(S_Q)$; (2) $\psi(ax)=\psi(a)\circ \psi(x)$, i.e., $\psi^{-1}R_{\psi(a)}\psi(x)=ax$,  for $a\in\F_{q^m}$ and $x\in\F_{q^n}$. By the definition of the nucleus, $R_{\psi(y)}R_{\psi(a)}=R_{\psi(a)}R_{\psi(y)}$ for  $a\in \F_{q^r}$ and $y\in \F_{q^n}$, so $\psi^{-1}R_{\psi(y)}\psi$ is $\F_{q^r}$-linear for each $y$ by  (2). On the other hand,  $\psi^{-1}(\psi(y)\circ\psi(x))=\psi^{-1}R_{\psi(y)}\psi(x)$ is symmetric in $x,y$, so it is $\F_{q^r}$-linear in both $x$ and $y$. Therefore, $\psi^{-1}(\psi(y)\circ\psi(x))=x\ast_K y$, where $x\ast_K y=\sum_{i,j}c_{ij}x^{q^{ri}}y^{q^{rj}}$ for some constants $c_{ij}$'s such that $c_{ij}=c_{ji}$. It follows that $\psi(x)\circ \psi(y)=\psi(x\ast_Ky)$ and also $a*_Kx=ax$ for   $a\in\F_{q^m}$ by (2). The algebraic system $S_K:=(\F_{q^n},+,\ast_K)$ is a semifield isotopic to $S_Q$, and its  middle nucleus is $\F_{q^m}$.

We write $K(X)=\sum_{i,j=0}^{n/r-1}c_{ij}X^{q^{ri}+q^{rj}}$, so that $K(x)=x\ast_K x$ for $x\in\F_{q^n}$. Let $M_1(X),M_2(X)$ be the reduced $q$-polynomials s.t. $M_1(x)=L(L_1(\psi(x)))$ and $M_2(x)=L_1(\psi(x))$ for $x\in\F_{q^n}$ respectively. Applying the map $x\mapsto L(1)L(x)-wx$ to both sides of $M_2(x)=L_1(\psi(x))$, we get $\psi(x)=L(1)M_1(x)-wM_2(x)$. Now $\psi(x)\circ \psi(x)=\psi(x\ast_Kx)$ takes the form
\begin{equation}\label{eqn_M}
M_1(X)^2-wM_2(X)^2\equiv L(1)M_1(K(X))-wM_2(K(X))\pmod{X^{q^n}-X}.
\end{equation}

\begin{lemma}\label{lemma_N}
Let $q$ be an odd prime power, $n\ge 2$ be an integer, and $w$ be a nonsquare in $\F_{q^n}$. Suppose that $L(X)$ is a reduced $q$-polynomial over $\F_{q^n}$ such that \eqref{eqn_L_ker} holds. If $Q(x)=L(x)^2-wx^2$ is planar over $\F_{q^n}$, then  the semifield $S_Q=(\F_{q^n},+,\circ)$ with multiplication as defined in \eqref{eqn_circ} is either isotopic to a finite field or has nucleus equal to $\F_q$.
\end{lemma}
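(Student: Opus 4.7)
The plan is to prove the contrapositive: assume $r := [\cN(S_Q):\F_q] > 1$ and deduce that $S_Q$ is isotopic to a finite field. The main tool is the functional identity \eqref{eqn_M}, which relates $M_1(X)^2 - wM_2(X)^2$ to $N(K(X))$, where $N(X) = L(1)M_1(X) - wM_2(X)$ and $K(X) = \sum_{i,j} c_{ij}X^{q^{ri}+q^{rj}}$.

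The first key observation is that since $K(X)$ involves only the $q^r$-Frobenius, the polynomial $N(K(X))$, reduced modulo $X^{q^n}-X$, involves only monomials $X^{q^a+q^b}$ with $a \equiv b \pmod{r}$. Writing $M_1(X)=\sum_i\alpha_iX^{q^i}$ and $M_2(X)=\sum_i\beta_iX^{q^i}$, matching coefficients in \eqref{eqn_M} forces $\alpha_a\alpha_b = w\beta_a\beta_b$ whenever $a\ne b$ and $a \not\equiv b \pmod r$. Using the nonsquare property of $w$ in the spirit of Remark \ref{remark_size}, the supports $A=\{i:\alpha_i\ne 0\}$ and $B=\{i:\beta_i\ne 0\}$ are severely constrained: one concludes that either (I) both $A$ and $B$ lie in a single residue class $T_k$ modulo $r$, or (II) $A\subseteq T_k$, $B\subseteq T_{k'}$ with $k\ne k'$ and $A\cap B=\emptyset$, or (III) $A=B$ meets exactly two residue classes $T_k, T_\ell$, with the ratios $\alpha_i/\beta_i$ taking two specific values $t$ and $w/t$ (with $t^2\ne w$).

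In Case (I), writing $M_1(X)=\tilde M_1(X^{q^k})$ and $M_2(X)=\tilde M_2(X^{q^k})$ with $\tilde M_1,\tilde M_2$ being $q^r$-polynomials and $\tilde M_2$ a permutation, the identity $L\circ M_2 = M_1$ yields $L=\tilde M_1\circ\tilde M_2^{-1}$, which is $\F_{q^r}$-linear; this contradicts \eqref{eqn_L_ker} when $r>1$, ruling out Case (I). In Cases (II) and (III), separating coefficients in \eqref{eqn_M} by residue class gives functional equations of the form $c_1\tilde M(Y)^2\equiv c_2\tilde M(K^{(q^k)}(Y))\pmod{Y^{q^n}-Y}$, where $\tilde M$ is a $q^r$-polynomial extracted from a piece of $M_1$ or $M_2$, $K^{(q^k)}$ denotes $K$ with coefficients raised to $q^k$, and $c_1,c_2$ are nonzero constants (nonvanishing uses that $w$ is a nonsquare). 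An $\F_{q^n}$-analog of Lemma \ref{lemma_L_perm} (with the same kernel-intersection proof) ensures that one of the relevant $\tilde M$'s is a permutation polynomial; rearranging then gives $K^{(q^k)}(Y)=\tilde M^{-1}\bigl((c_1/c_2)\tilde M(Y)^2\bigr)$, exhibiting $K^{(q^k)}$ as EA-equivalent to $Y^2$. By the converse direction of Lemma \ref{lemma_iso}(1), $S_{K^{(q^k)}}$ is isotopic to a finite field; since $S_K$ and $S_{K^{(q^k)}}$ are isotopic via the Frobenius $Y\mapsto Y^{q^k}$, and $S_K$ is isotopic to $S_Q$ by construction, $S_Q$ is isotopic to a finite field.

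The main obstacle is the case analysis: one has to carefully handle pairs $a, b$ with $a\equiv b\pmod r$ (for which the coefficient comparison gives no constraint) while maintaining enough control on pairs with $a\not\equiv b\pmod r$ to force the rigid support structure. The nonsquareness of $w$ is essential throughout; it drives the three-case dichotomy (in particular, the impossibility of $A$ meeting three or more residue classes, via a product-of-two-$\gamma_i$'s argument), forces the constants in the residue-class-matching equations to be nonzero, and, via the extended Lemma \ref{lemma_L_perm}, supplies the permutation $\tilde M$ that converts the functional equation into the needed EA-equivalence.
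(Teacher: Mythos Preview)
Your proposal is correct and follows essentially the same route as the paper's proof. The organization into your three cases (I)/(II)/(III) corresponds exactly to the paper splitting the situation $|I_1|=|I_2|=1$ into the subcases $r_1=r_2$ (your (I), dispatched via \eqref{eqn_L_ker}) and $r_1\ne r_2$ (your (II)), together with the case $I_1=I_2$ of size two (your (III)); the paper likewise uses the coefficient identity $\alpha_a\alpha_b=w\beta_a\beta_b$ for $a\not\equiv b\pmod r$, the nonsquare argument of Remark~\ref{remark_size}, and Lemma~\ref{lemma_L_perm} to locate a permuting component and reduce $K$ (equivalently, your $K^{(q^k)}$) to $Y^2$. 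The only cosmetic difference is that in your case (II) you read off directly from $\tilde M_2(Y)^2\equiv\tilde M_2(K^{(q^{k'})}(Y))$ that $K$ is equivalent to $Y^2$, whereas the paper combines the two separated equations to obtain $L(z)^2=L(1)L(z^2)$ and hence that $L$ is a monomial; both conclusions immediately give the isotopy to a finite field.
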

\begin{proof}
We use the notation introduced preceding the lemma, and write $s=n/r$.  The semifield $S_Q$ is isotopic to a finite field if and only $r=n$, so we assume that $1<r<n$ and try to derive a contradiction. First we introduce some notation for the proof. For a reduced $q$-polynomial $f(X)\in\F_{q^n}[X]$, we have a unique decomposition
\[
f(X)=f_0(X)+f_1(X^{q})+\cdots+f_{r-1}(X^{q^{r-1}}),
\]
where the $f_i$'s are reduced $q^r$-polynomials. We call it the {\it $q^r$-decomposition} of $f$, and call $f_i$ the {\it $i$-th component}. For $t\in\{1,2\}$, write $M_t(X)=\sum_{i=0}^{n-1}a_{i,t}X^{q^i}$, and define
\[
I_t:=\{0\le i\le r-1:\,\textup{one of }a_{i,t},\,a_{i+r,t},\cdots,a_{i+(s-1)r,t}\textup{ is nonzero} \}.
\]
We comment that $i\in I_t$ if and only if $M_t$ has a nonzero $i$-th component in its $q^r$-decomposition. Since $M_2$ is a permutation and $L$ is not the zero polynomial, both  $I_1$ and $I_2$ are nonempty. Observe that for $i\not\equiv j\pmod{r}$, the coefficient of $X^{q^i+q^j}$ is zero on the right hand side of \eqref{eqn_M}, so  we have $a_{i1}a_{j1}=wa_{i2}a_{j2}$ from the left hand side. We can pick two subsequences of length $r$ whose supports are $I_1$ and $I_2$  from the coefficients of $M_1(X)$ and $M_2(X)$ respectively, satisfying the conditions in Remark \ref{remark_size}. It follows that each of $I_1$ and $I_2$ has size at most $2$ and $I_1=I_2$ when one of them has size $2$.

We first consider the case $|I_1|=|I_2|=1$. In this case, we have $M_t(X)=U_t(X^{q^{r_t}})$ for $t\in\{1,2\}$, where $U_1$ and $U_2$ are $q^r$-polynomials, and $0\le r_1,\,r_2\le r-1$.  Set $r_3:=r_1-r_2\pmod{r}$. Since $M_1(X)\equiv L(M_2(X))\pmod{X^{q^n}-X}$, the $q^r$-decomposition of $L(X)$ has exactly one nonzero component, namely the $r_3$-rd.
We have $r_3\ne 0$: otherwise $L(X)$ is  $q^r$-polynomial, contradicting \eqref{eqn_L_ker}. By comparing exponents of the monomials in \eqref{eqn_M},  we get
$M_1(x)^2=L(1)M_1(K(x))$ and $M_2(x)^2=M_2(K(x))$
for  $x\in\F_{q^n}$. Applying $L$ to both sides of the second equation, we get $L(M_2(x)^2)=M_1(K(x))$. Recall that $x\mapsto M_2(x)=L_1(\psi(x))$ is a permutation. By setting $z:=M_2(x)$ and combining $L(M_2(x)^2)=M_1(K(x))$ with the first equation, we get $L(z)^2=L(1)L(z^2)$. It follows that $L(X)$ is a monomial and $S_Q$ is isotopic to a finite field, contradicting our assumption.

We next consider the case $I_1=I_2=\{r_1,r_2\}$, where $0\le r_1<r_2\le r-1$. In this case, we have the $q^r$-decomposition $M_1(X)=A(X^{q^{r_1}})+B(X^{q^{r_2}})$, $M_2(X)=C(X^{q^{r_1}})+D(X^{q^{r_2}})$, where $A,B,C,D$ are reduced $q^r$-polynomials neither of which is the zero polynomial. Plugging them into \eqref{eqn_M} and again by comparing exponents of monomials, we get
\begin{align}
A(X^{q^{r_1}})B(X^{q^{r_2}})&=wC(X^{q^{r_1}})D(X^{q^{r_2}}),\label{eqn_extra}\\
A(X^{q^{r_1}})^2-wC(X^{q^{r_1}})^2&\equiv L(1)A(K(X)^{q^{r_1}})-wC(K(X)^{q^{r_1}})\pmod{X^{q^n}-X},\label{eqn_extra2}\\
B(X^{q^{r_2}})^2-wD(X^{q^{r_2}})^2&\equiv L(1)B(K(X))^{q^{r_2}}-wD(K(X)^{q^{r_2}})\pmod{X^{q^n}-X}.\label{eqn_extra3}
\end{align}
The equation \eqref{eqn_extra} holds without modulo $X^{q^n}-X$ since both sides have degree at most $2q^{n-1}\le q^n-1$. If the coefficient of $X^{q^{r\ell}}$ in $B(X)$ is nonzero, then by considering the coefficients of the monomials $\{X^{q^{r_2+r\ell}+q^{r_1+ri}}: \,0\le i\le s-1\}$ on both sides of \eqref{eqn_extra},  we see that $A$ and $C$ differs by a constant. That is, $A(X)=\lambda C(X)$, $B(X)=\lambda^{-1}wD(X)$ for some $\lambda\ne 0$. Canceling $A$ and $B$ from \eqref{eqn_extra2} and \eqref{eqn_extra3} by substitution, we get
\begin{align}
(\lambda L(1)-w)\cdot C(K(x)^{q^{r_1}})= (\lambda^2-w)\cdot C(x^{q^{r_1}})^2, \label{eqn_CD}\\
(\lambda^2-\lambda L(1))\cdot D(K(x)^{q^{r_2}})=(\lambda^2-w)\cdot D(x^{q^{r_2}})^2.\label{eqn_CD2}
\end{align}
Since $w$ is a nonsquare, $\lambda^2-w\ne 0$. It follows that $\lambda L(1)-w$ and $\lambda^2-\lambda L(1)$ are both nonzero. By expansion using the $q^m$-decomposition of $M_1,\,M_2$, we have
\[
M_1(x)^2-wM_2(x)^2=(\lambda^2-w)\cdot(C(x^{q^{r_1}})^2-\lambda^{-2}wD(x^{q^{r_2}})^2).
\]
The map $x\mapsto Q(M_2(x))=M_1(x)^2-wM_2(x)^2$ is a planar function of DO type equivalent to $Q$, so at least one of $C,\,D$ is a permutation polynomial by Lemma \ref{lemma_L_perm}. If $C$ is a permutation polynomial, then  we substitute $x+y,\,x,\,y$ into \eqref{eqn_CD} and take their linear combination to get  $c_1 C((x\ast_Ky)^{q^{r_1}})=\left(c_1 C(x^{q^{r_1}})\right)\cdot \left(c_1 C(y^{q^{r_1}})\right)$, where $c_1=\frac{\lambda^2-w}{\lambda L(1)-w}$. Here, we have used the fact that $K(x+y)-K(x)-K(y)=2(x*_Ky)$. This shows that $S_K$ is isotopic to a finite field, contradicting our assumption. The case $D$ is a permutation polynomial leads to the same contradiction. This completes the proof.
\end{proof}

The size of the nucleus is an invariant under isotopism and provides a measure for the non-associativity of a commutative semifield. Lemma \ref{lemma_N} suggests that the associated semifield $S_Q$ of a planar function $Q(x)$ of the form described in the lemma behaves in two extremes: the size of its nucleus is either the maximum possible or the minimum possible. The planar functions that have associated semifields isotopic to a finite field have been characterized in \cite{coulter}, and those of the prescribed form is implicitly described in Lemma \ref{lemma_ps}. This provides some evidence to the conjecture that the planar functions of this form are all known. There is not much that we can say about the middle nucleus. In \cite{chk}, the authors have studied the equivalent forms of planar functions whose corresponding commutative semifields have specified nuclei. We deal with the planar functions of our special form in the following lemma.

\begin{lemma}\label{lemma_equiv}
Let $q$ be odd, $n\ge 2$ be an integer, and $w$ be a nonsquare in $\F_{q^n}$. Let $L(X)$ be a reduced $q$-polynomial over $\F_{q^n}$ such that \eqref{eqn_L_ker} holds. Assume that $Q(X)=L(X)^2-wX^2$ is planar over $\F_{q^n}$ and  the semifield $S_Q=(\F_{q^n},+,\circ)$ with multiplication as in \eqref{eqn_circ} has a middle nucleus of size $q^m$ with $1\le m<n$. Then $Q(X)$ is equivalent to either
\begin{enumerate}
\item[(i)]  $A(X)^2-w'X^{2q^k}$, with $\gcd(k,m)=1$, $w'$ a nonsquare and $A(x)$ a $q^m$-polynomial, or
\item[(ii)] $\left(L(1)X+T_1(\Delta)+R(\Delta)\right)^2-w\left(X+T_0(\Delta)+L(1)w^{-1}R(\Delta)\right)^2$, where $\Delta=X^{q^m}-X$, $T_0,\,T_1$ are $q^m$-polynomial and $R$ is a nonzero $q$-polynomial.
\end{enumerate}
\end{lemma}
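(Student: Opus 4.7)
My plan is to use the middle-nucleus hypothesis to extract a rigid support constraint on the polynomials $M_1, M_2$ in equation \eqref{eqn_M}, and then classify the resulting form of $Q$ by the support profile modulo $m$. Since $1\le m<n$, Lemma \ref{lemma_N} rules out $S_Q$ being isotopic to a finite field, so $\cN(S_Q)=\F_q$, and the explicit isotope $S_K=(\F_{q^n},+,*_K)$ constructed before Lemma \ref{lemma_N} has middle nucleus exactly $\F_{q^m}$ and satisfies the identity \eqref{eqn_M}. Writing the middle-nucleus condition $(ax)*_K y=x*_K(ay)$ for $a\in\F_{q^m}$ as a polynomial identity in $x,y$ and comparing coefficients yields that every monomial $x^{q^i}y^{q^j}$ in the bilinearization $K(x+y)-K(x)-K(y)$ satisfies $i\equiv j\pmod m$. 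By additivity, $M_1(K(X))$ and $M_2(K(X))$ inherit the analogous restriction, so by \eqref{eqn_M} so does the left-hand side $M_1(X)^2-wM_2(X)^2$. Writing $M_t(X)=\sum_{i=0}^{n-1}a_{i,t}X^{q^i}$, the vanishing of the cross-class coefficients gives the key constraint
\[
a_{i,1}a_{j,1}=w\,a_{i,2}a_{j,2}\qquad\text{for all } i\not\equiv j\pmod m.
\]

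The argument of Remark \ref{remark_size} applied blockwise across residue classes modulo $m$ then shows that on each active class $s$ the ratio $\rho_s=a_{i,1}/a_{i,2}$ is well defined, and $\rho_{s_1}\rho_{s_2}=w$ for any two distinct active classes. The nonsquareness of $w$ forbids the collision $\rho_{s_1}=\rho_{s_2}$, so at most two distinct values of $\rho_s$ occur; a short auxiliary argument also forces $M_1$ and $M_2$ to have active support in the same set of residue classes. This dichotomy yields two mutually exclusive possibilities: \emph{Case A}, in which $M_1$ is supported in a single class $s_1$ and $M_2$ in a single class $s_2$ with $s_1\ne s_2$, so that $M_t(X)=\tilde M_t(X^{q^{s_t}})$ for some $q^m$-polynomials $\tilde M_t$; and \emph{Case B}, in which $M_1, M_2$ each have active support in exactly two residue classes, with ratios $\rho$ and $w\rho^{-1}$.

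In Case A, set $k=s_2-s_1\pmod m\ne 0$. The substitution $Y=X^{q^{s_1}}$ (an $\F_q$-linear permutation) converts $M_1(X)^2-wM_2(X)^2$ into $\tilde M_1(Y)^2-w\tilde M_2(Y^{q^k})^2$. Since $\tilde M_2$ is a $q^m$-polynomial permutation, I would construct a $q^m$-polynomial permutation $L_2$ of $\F_{q^n}$ with $\tilde M_2(L_2(Y)^{q^k})=cY^{q^k}$ for some $c\in\F_{q^n}^*$; substituting again, $Q$ becomes equivalent to $A(Y)^2-w'Y^{2q^k}$ with $A=\tilde M_1\circ L_2$ a $q^m$-polynomial and $w'=wc^2$ a nonsquare, yielding case (i). The coprimality $\gcd(k,m)=1$ is forced by the middle nucleus being exactly $\F_{q^m}$ (not a strictly larger subfield). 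In Case B, after a residue shift one may assume the ratio $\rho$ is attained on class $0$; the relation $\psi(x)=L(1)M_1(x)-wM_2(x)$ (established in Section 4) then pins down $\rho=L(1)$, so the other active class has ratio $L(1)w^{-1}$. The class-$0$ coefficients of $M_1, M_2$ assemble into $q^m$-polynomials $T_1, T_0$ respectively, while the cross-class coefficients must come from a single $q$-polynomial $R$ applied to $\Delta=X^{q^m}-X$, scaled by $L(1)w^{-1}$ on the $M_2$ side; reading this off gives case (ii).

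The principal obstacle is the Case B analysis: showing that the nonzero cross-class data in $M_1, M_2$ are forced to arise from one and the same $q$-polynomial $R(\Delta)$ (rather than from independent choices in each residue class), and then extracting the precise scalings that produce the coefficients $L(1)$ and $L(1)w^{-1}$ in the statement. A secondary subtlety is the degenerate sub-case of Case A in which $s_1=s_2$, which requires separate treatment using the kernel condition \eqref{eqn_L_ker} on $L$ together with the specific form of $\psi$, either to rule it out or to absorb it into case (ii) by a suitable reinterpretation of $T_0, T_1, R$. Throughout, the nonsquareness of $w$ is used repeatedly to prevent degenerate ratio collisions $\rho^2=w$ that would otherwise collapse the classification.
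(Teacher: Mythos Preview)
Your cross-class constraint $a_{i,1}a_{j,1}=wa_{i,2}a_{j,2}$ for $i\not\equiv j\pmod m$ is valid and does limit the active residue classes to at most two, but your subsequent case attribution is inverted. Your Case~B (both $M_1,M_2$ supported in exactly two classes) is not a single case: it covers both the paper's case~(1) of the dichotomy obtained by reducing \eqref{eqn_M} modulo $X^{q^m}-X$ \emph{and} the paper's case~(2) with $u=v$. The former leads to form~(i), not~(ii): the ratios there are $c_1c_3^{-1}$ and $c_2c_4^{-1}$, constrained only by $c_1c_2=wc_3c_4$, and the reduction to form~(i) proceeds by showing via Lemma~\ref{lemma_L_perm} that one of the class-components $g_k,g_l$ is a permutation and then inverting it. Your assertion that the relation $\psi=L(1)M_1-wM_2$ ``pins down $\rho=L(1)$'' is unjustified and in fact false in that case. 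Conversely, the situation you flag merely as a ``secondary subtlety'' (the degenerate $s_1=s_2$) is the genuine source of form~(ii); when $I_1=I_2=\{u\}$ exactly it is ruled out (it forces $L$ to be a $q^m$-polynomial, contradicting \eqref{eqn_L_ker}), so one needs a second active class whose component collapses to zero modulo $X^{q^m}-X$.

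The missing ingredient that distinguishes the two halves of your Case~B and that produces the specific scalars $L(1)$ and $L(1)w^{-1}$ is the bilinear identity coming from $a*_Kx=ax$ for $a\in\F_{q^m}$: substituting $x+a$, $x$, $a$ into \eqref{eqn_M} and taking the linear combination gives
\[
M_1(a)M_1(X)-wM_2(a)M_2(X)=L(1)M_1(aX)-wM_2(aX),\qquad a\in\F_{q^m},
\]
whose $q^m$-decomposition yields $(M_1(a)-L(1)a^{q^i})f_i(X^{q^i})=w(M_2(a)-a^{q^i})g_i(X^{q^i})$ for every $i$ and every $a\in\F_{q^m}$. This constrains the components \emph{within} each residue class---something your cross-class constraint cannot do---and is precisely what forces $f_u(X)-L(1)X$ and $g_u(X)-X$ to vanish on $\F_{q^m}$, yielding the factorizations through $\Delta=X^{q^m}-X$ that define $T_1,T_0$ and $R$. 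Without this identity your Case~B analysis cannot be completed, and the $T_0,T_1,R$ structure with the stated scalings cannot be extracted.
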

\begin{proof}
Take the same notation as in the proof of Lemma \ref{lemma_N}, and write $s=n/m$. Let $M_1(X)=\sum_{i=0}^{s-1}f_i(X^{q^i})$ and $M_2(X)=\sum_{i=0}^{s-1}g_i(X^{q^i})$ be their $q^m$-decompositions respectively, where the $f_i$'s and $g_i$'s are reduced $q^m$-polynomials. Set $I_1=\{i:\,f_i\ne 0\}$ and $I_2=\{i:\,g_i\ne 0\}$. Both $I_1$ and $I_2$ are non-empty subsets of $\{0,1,\cdots, s-1\}$.

For $a\in\F_{q^m}$, we have $K(a)=a*_Ka=a^2$, so \eqref{eqn_M} gives that $M_1(a)^2-wM_2(a)^2=L(1)M_1(a^2)-wM_2(a^2)$. In other words,
\begin{equation}\label{eqn_M_new}
M_1(X)^2-wM_2(X)^2\equiv L(1)M_1(X^2)-wM_2(X^2)\pmod{X^{q^m}-X}.
\end{equation}
It is similar to \eqref{eqn_M12ff} and the same argument using Remark \ref{remark_size} show that either
\begin{enumerate}
\item $M_1(X)\equiv c_1X^{q^k}+c_2X^{q^l}$ and $M_2(X)\equiv c_3X^{q^k}+c_4X^{q^l}\pmod{X^{q^m}-X}$, or
\item $M_1(X)\equiv c_1'X^{q^u}$ and $M_2(X)\equiv c_2'X^{q^v}\pmod{X^{q^m}-X}$,
\end{enumerate}
where $0\le k, l,u,v\le m-1$, $k<l$, the $c_i$'s and $c_i'$'s are constants with $c_1c_2=wc_3c_4\ne 0$. Notice that $c_2'\ne 0$, since $M_2(X)$ is a permutation polynomial over $\F_{q^n}$. If $c_1'=0$, then we set $u=v$ for uniform treatment. The monomials in $M_t(X)\pmod{X^{q^m}-X}$ correspond to the components of $M_t(X)$ in its $q^m$-decomposition for $t\in\{1,2\}$, so $k,l\in I_1\cap I_2$ in the case (1), $v\in I_2$ in the case (2), and $u\in I_1$ in the case (2) if $c_1'\ne 0$.

In the case (2), after plugging them back into \eqref{eqn_M_new} and comparing coefficients we get $c_1'=L(1)$ and $c_2'=1$ if $u\ne v$ and $c_1'^2-wc_2'^2=L(1)c_1'-wc_2'$ if $u= v$. Now consider the special case $c_1'=0$. We deduce that $c_2'=1$, $M_2(a)=a^{q^u}$ for $a\in\F_{q^m}$, so $M_2$ maps $\F_{q^m}$ to $\F_{q^m}$ bijectively. On the other hand, $L(M_2(a))=M_1(a)=c_1'a^{q^u}=0$ for all $a\in\F_{q^m}$. This shows that $\F_{q^m}$ lies in the kernel of $L$. Therefore, there exists a $q$-polynomial $R$ such that $L(X)=R(X^{q^m}-X)$, so $Q(X)$ is of the second form in the lemma. We thus assume that $c_1'\ne 0$ in the case (2).

For $a\in\F_{q^m}$ and $x\in\F_{q^n}$, we plug $x+a$, $x$ and $a$ into \eqref{eqn_M} and take their linear combination to get $M_1(a)M_1(x)-wM_2(a)M_2(x)=L(1)M_1(ax)-wM_2(ax)$. Here, we have used the fact $K(x)=x*_Kx$ and $a*_Kx=ax$ for $a\in\F_{q^m}$. Since both $M_1(X)$ and $M_2(X)$ are reduced, we have $M_1(a)M_1(X)-wM_2(a)M_2(X)=L(1)M_1(aX)-wM_2(aX)$. By expanding it using the $q^m$-decompositions and comparing exponents of monomials, we get
\begin{equation}\label{eqn_figi}
(M_1(a)-L(1)a^{q^i})f_i(X^{q^i})=w(M_2(a)-a^{q^i})g_i(X^{q^i}),\quad 0\le i\le s-1.
\end{equation}
If $i$ is an integer such that $0\le i\le s-1$ and neither $M_1(X)-L(1)X^{q^i}$ nor $M_2(X)-X^{q^i}$ is zero modulo $X^{q^m}-X$, then we claim that $i$ is either in both of $I_1$ and $I_2$ or  in neither of them. There exists $a\in\F_{q^m}$ such that  $M_1(a)-L(1)a^{q^i}\ne 0$ by the assumption, so it follows from \eqref{eqn_figi} that $f_i$ is equal to  $g_i$ multiplied by a constant. Similarly, $g_i$ is equal to  $f_i$ multiplied by a constant, so the claim follows.

After these preparations, we are now ready to handle each case separately.
In the case (1), we claim that $I_1=I_2=\{k,l\}$,  $M_2(X)=g_k(X^{q^k})+g_l(X^{q^l})$ and $M_1(X)=c_1c_3^{-1}g_k(X^{q^k})+c_2c_4^{-1}g_l(X^{q^l})$.  For each $0\le i\le s-1$, neither $M_1(X)-L(1)X^{q^i}$ nor $M_2(X)-X^{q^i}$ is zero modulo $X^{q^m}-X$, so $I_1=I_2$ by the preceding claim. For $i\in I_1=I_2$, there is a nonzero constant $d_i$ such that $g_i=d_if_i$ and plugging it in \eqref{eqn_figi} we get
\begin{equation}\label{eqn_a_pf}
(L(1)-wd_i)a^{q^i}=M_1(a)-wd_iM_2(a)=(c_1-wd_ic_3)a^{q^k}+(c_2-wd_ic_4)a^{q^l}.
\end{equation}
It is straightforward to check that the right hand side has at least one nonzero coefficient, so $L(1)-wd_i\ne 0$. It follows that  $i=k$ or $i=l$. Comparing the coefficients of $a^{q^l}$ in \eqref{eqn_a_pf} in the case $i=k$ gives $d_k=w^{-1}c_4^{-1}c_2=c_3c_1^{-1}$. Similarly, we get $d_l=c_4c_2^{-1}$ in the case $i=l$. This proves the claim. We then compute that
\[
Q(M_2(x))=M_1(x)^2-wM_2(x)^2=(1-wd_k^2)g_k(x^{q^k})^2+(1-wd_l^2)g_l(x^{q^l})^2.
\]
Since $Q(M_2(x))$ is a planar function equivalent to $Q(x)$, one of $g_k$ and $g_l$ must be a permutation polynomial by Lemma \ref{lemma_L_perm}. Consider the case $g_k$ is a permutation polynomial. With $y=g_k(x^{q^k})^{q^{l-k}}$ we have $x^{q^k}=h_k(y^{q^{k-l}})$ for some $q^m$-polynomial $h_k$, and so $g_l(x^{q^l})=g_l(h_k(y^{q^{k-l}})^{q^{l-k}})=A(y)$ for some $q^m$-polynomial $A$. We now have $Q(M_2(x))=(1-wd_k^2)y^{2q^{k-l}}+(1-wd_l^2)A(y)^2$ with $A$ a $q^m$-polynomial, and so $Q(x)$ is equivalent to one of the first form in the lemma. Since the nucleus of $S_Q$ is $\F_q$ by Lemma \ref{lemma_N}, we have $\gcd(k-l,m)=1$.  The case $g_l$ is a permutation polynomial is dealt with similarly.

In the case (2) with $u\ne v$,  we claim that $I_1=\{u\}$ and $I_2=\{v\}$. Recall that $c_1'=L(1)\ne 0$, $c_2'=1$, $u\in I_1$ and $v\in I_2$ in this case. For each $i\in\{0,1,\cdots,s-1\}\setminus\{u,v\}$, neither $M_1(X)-L(1)X^{q^i}$ nor $M_2(X)-X^{q^i}$ is zero modulo $X^{q^m}-X$ in this case, so $I_1\setminus\{u,v\}=I_2\setminus\{u,v\}$. If $i\in I_1\setminus\{u,v\}=I_2\setminus\{u,v\}$, then there exists a nonzero constant $d_i$ such that $g_i=d_if_i$, and \eqref{eqn_figi} reduces to $(L(1)-wd_i)a^{q^i}=c_1'a^{q^u}-wd_ic_2'a^{q^v}$. This is impossible since $c_1'\ne 0$ and  $wd_ic_2'\ne 0$. Hence $I_1,I_2$ are both subsets of $\{u,v\}$. By setting $i=v$ in \eqref{eqn_figi}, we get $f_v=0$. Similarly, $g_u=0$. This proves the claim. By the same argument in the previous case, $Q(x)$ is equivalent to one of the first form in the lemma.

Finally, consider the case (2) with $u=v$ and $c_1'\ne 0$. In this case, $u$ is in both $I_1$ and $I_2$. As before we have $I_1\setminus\{u\}=I_2\setminus\{u\}$. If $I_1=I_2=\{u\}$, then from the fact $M_2(X)$ is a permutation polynomial and $M_1(X)=L(M_2(X))\pmod{X^{q^n}-X}$ we deduce that $L$ is a $q^m$-polynomial, contradicting \eqref{eqn_L_ker}.
Hence $I_1\setminus\{u\}=I_2\setminus\{u\}\ne\emptyset$. For $t\in I_1\setminus\{u\}$, there exists a constant $d_t\ne 0$ such that $g_t=d_tf_t$ and \eqref{eqn_figi} reduces to $c_1'a^{q^u}-L(1)a^{q^t}=wd_t(c_2'a^{q^u}-a^{q^t})$ for $a\in\F_{q^m}$. By comparing coefficients we get $L(1)=wd_t$ and $c_1'=wd_tc_2'$. Together with $c_1'^2-wc_2'^2=L(1)c_1'-wc_2'$, we deduce that $c_2'=1$, $c_1'=L(1)$ and $d_t=L(1)w^{-1}$. To sum up, we have $M_2(X)-g_u(X^{q^u})=L(1)w^{-1}(M_1(X)-f_u(X^{q^u}))$, $M_1(X)\equiv L(1)X^{q^u}$ and $M_2(X)\equiv X^{q^u}\pmod{X^{q^m}-X}$. The $\F_{q^m}$-linear maps $x\mapsto f_u(x)-L(1)x$ and $x\mapsto g_u(x)-x$ both have $\F_{q^m}$ in their kernels, so there exist $q^m$-polynomials $T_1,T_0$ such that $T_1(X^{q^m}-X)=f_u(X)-L(1)X$ and $T_0(X^{q^m}-X)=g_u(X)-X$. Meanwhile, $M_1(X)-f_u(X^{q^u})$ has $\F_{q^m}$ in the kernel, so there exists a $q$-polynomial $R$ such that it is equal to $R(X^{q^{m+u}}-X^{q^u})$. We thus have $M_1(X)=L(1)X^{q^u}+T_1(X^{q^m}-X)+R(X^{q^{m+u}}-X^{q^u})$, $M_2(X)=X^{q^u}+T_0(X^{q^m}-X)+L(1)w^{-1}R(X^{q^{m+u}}-X^{q^u})$, and $Q(X)$ is equivalent to the second form in this case.
\end{proof}

For the rest of this section, we will only consider the simplest cases as a demonstration of techniques, where the associated semifields are rank two commutative semifields. They correspond to Case (i) in Lemma \ref{lemma_equiv} with $n=2m$. We start with a technical lemma.
\begin{lemma}\label{lemma_Psi} Let $q$ be an odd prime power, $m$ be a positive integer, and take $\zeta\in\F_{q^{2m}}$ such that $\zeta^{q^m-1}=-1$.  Let $\Psi:\,\F_{q^{2m}}^2\mapsto\F_{q^m}^3$ be the map defined by
\begin{equation}\label{eqn_Psi}
\Psi(x_0\zeta+x_1,y_0\zeta+y_1):=(x_1y_1,\,x_0y_0,\,x_0y_1+x_1y_0),\quad\forall\, x_0,x_1,y_0,y_1\in\F_{q^m}.
\end{equation}
Then its image set is equal to $\{(A,B,C)\in\F_{q^m}^3:\,C^2-4AB \text{ is a square in $\F_{q^m}$}\}$, and $\Psi(x,y)=(0,0,0)$ if and only if  $x=0$ or $y=0$.
\end{lemma}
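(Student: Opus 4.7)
The plan is to verify both halves of the image characterization together with the kernel statement through direct algebraic manipulation of the defining expressions, splitting the construction in the reverse direction into cases depending on which of $A, B$ vanishes. Since $\zeta^{q^m-1}=-1$ and $q$ is odd, $\zeta\notin\F_{q^m}$, so $\{1,\zeta\}$ is an $\F_{q^m}$-basis of $\F_{q^{2m}}$ and $\Psi$ is well-defined.

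First I would record the fundamental identity: with $A=x_1y_1$, $B=x_0y_0$, $C=x_0y_1+x_1y_0$ one computes
\begin{equation*}
C^2-4AB=(x_0y_1+x_1y_0)^2-4(x_0y_1)(x_1y_0)=(x_0y_1-x_1y_0)^2,
\end{equation*}
so every triple in the image $\Psi(\F_{q^{2m}}^2)$ satisfies the square-discriminant condition. This handles one inclusion.

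For the reverse inclusion, given $(A,B,C)\in\F_{q^m}^3$ with $C^2-4AB=D^2$ for some $D\in\F_{q^m}$, the quadratic $t^2-Ct+AB$ has roots $u:=(C+D)/2$ and $v:=(C-D)/2$ in $\F_{q^m}$ satisfying $u+v=C$ and $uv=AB$. The task is to exhibit $x_0,x_1,y_0,y_1\in\F_{q^m}$ with $x_1y_1=A$, $x_0y_0=B$, $x_0y_1=u$ and $x_1y_0=v$. I would split into three cases. If $A\ne 0$, take $x_1=1$, $y_1=A$, $x_0=u/A$, $y_0=v$; the identity $uv=AB$ then gives $x_0y_0=B$. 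If $A=0$ and $B\ne 0$, symmetrically take $x_0=1$, $y_0=B$, $x_1=v/B$, $y_1=u$, using $uv=0$ to force $x_1y_1=0$. If $A=B=0$ then $C^2=D^2$, and (after possibly swapping $u,v$) the choice $x_1=y_0=0$, $x_0=1$, $y_1=C$ works.

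For the kernel statement, from $\Psi(x,y)=(0,0,0)$ I would read off $x_0y_0=0$, $x_1y_1=0$ and $x_0y_1+x_1y_0=0$, and then do a short case analysis: if $x_0\ne 0$ then $y_0=0$ forces $x_0y_1=0$, hence $y_1=0$ and $y=0$; if $x_0=0$ and $x\ne 0$ then $x_1\ne 0$ and the remaining equations force $y_0=y_1=0$, so again $y=0$; the converse direction is trivial. The entire argument is elementary; the only real care needed is in the degenerate sub-cases of the second step where $A$ or $B$ vanishes, which is what I expect to be the main bookkeeping obstacle.
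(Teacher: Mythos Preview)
Your proof is correct and follows essentially the same approach as the paper: both verify the identity $C^2-4AB=(x_0y_1-x_1y_0)^2$ for one inclusion, and for the reverse inclusion both solve an auxiliary quadratic over $\F_{q^m}$ (you use $t^2-Ct+AB$, the paper uses $BX^2-CX+A$) and then handle the degenerate cases $A=0$ or $B=0$ by direct inspection. The bookkeeping differs only cosmetically.
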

\begin{proof}
If $(A,B,C)=\Psi(x,y)$ with $x=x_0\zeta+x_1$, $y=y_0\zeta+y_1$ ($x_i,y_i\in\F_{q^m}$), then $A=x_1y_1$, $B=x_0y_0$, $C=x_0y_1+x_1y_0$, and $C^2-4AB=(x_0y_1-x_1y_0)^2$ is a square in $\F_{q^m}$. If further $A=B=C=0$, then it is straightforward to show that at least one of $x$ and $y$ is $0$.

Conversely, suppose that $(A,B,C)\in\F_{q^m}^3$ satisfies that $C^2-4AB$ is a square. We can directly check that $\Psi(\zeta,B\zeta+C)=(A,B,C)$ if $A=0$ and $\Psi(1,C\zeta+A)=(A,B,C)$ if $B=0$.  Now assume that $AB\ne 0$, and let $t$ be a solution to $BX^2-CX+A=0$. It is now routine to check that $t\in\F_q^*$ and $\Psi(\zeta+t,B\zeta+At^{-1})=(A,B,C)$.
\end{proof}

\begin{lemma}\label{lemma_specialQ}
Suppose that $m\ge 3$, $1\le k\le m-1$ and $\gcd(k,m)=1$. Let $q$ be odd and $w$ be a nonsquare in $\F_{q^{2m}}$. Then $Q(X)=(X^{q^m}-X)^2-wX^{2q^k}$ is not planar over $\F_{q^{2m}}$.
\end{lemma}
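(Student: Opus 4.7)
The plan is to assume $Q(X)=(X^{q^m}-X)^2-wX^{2q^k}$ is planar and derive a contradiction by explicitly producing a nontrivial collision $Q(x)=Q(y)$. Writing $\Delta_z:=z^{q^m}-z$, direct expansion gives $Q(x)-Q(y)=\Delta_{x-y}\Delta_{x+y}-w((x-y)(x+y))^{q^k}$, so by Lemma \ref{lemma_2to1} the planarity of $Q$ is equivalent to the nonexistence of pairs $(u,v)\in(\F_{q^{2m}}^*)^2$ satisfying the collision equation $\Delta_u\Delta_v=w(uv)^{q^k}$; my goal is to exhibit such a pair.

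I will fix $\zeta\in\F_{q^{2m}}$ with $\zeta^{q^m}=-\zeta$, set $\eta:=\zeta^2\in\F_{q^m}^*$ (a nonsquare in $\F_{q^m}$) and $\mu_k:=\zeta^{q^k-1}\in\F_{q^m}^*$ (so $\mu_k^2=\eta^{q^k-1}$), and write $u=u_1+u_0\zeta$, $v=v_1+v_0\zeta$, $w=w_1+w_0\zeta$ with coefficients in $\F_{q^m}$. Note $w_0\ne 0$, since every element of $\F_{q^m}^*$ is a square in $\F_{q^{2m}}^*$ while $w$ is not. Setting $A:=u_1v_1$, $B:=u_0v_0$, $C:=u_0v_1+u_1v_0$, I split the collision equation along $\{1,\zeta\}$ into two equations in $\F_{q^m}$, solve the resulting linear system for $A^{q^k}$ and $C^{q^k}$ in terms of $B$, and raise to the $q^{m-k}$-th power to obtain, with $s:=q^{m-k}$ and $N(w):=w_1^2-\eta w_0^2$,
\[
A=\frac{w_1^s\eta^s}{N(w)^s}B^s-\eta B,\qquad C=-\frac{w_0^s\eta^s}{\mu_k^s N(w)^s}B^s.
\]
Lemma \ref{lemma_Psi} then tells me that such a triple $(A,B,C)$ lifts to a pair $(u,v)$ with $uv\ne 0$ exactly when $B\ne 0$ and $C^2-4AB$ is a square in $\F_{q^m}$.

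Substitution, factoring out $B^2$, and setting $T:=B^{s-1}$ will yield $C^2-4AB=B^2h(T)$ for a quadratic $h(T)\in\F_{q^m}[T]$. Using $\mu_k^{2s}=\eta^{1-s}$ and the characteristic-$p$ identity $(w_1^2-\eta w_0^2)^s=w_1^{2s}-\eta^s w_0^{2s}$, the discriminant of $h$ will collapse to $D=16\eta^{2s}/N(w)^s$. Since the quadratic character $\chi'$ of $\F_{q^{2m}}^*$ factors as $\chi'=\chi\circ N_{\F_{q^{2m}}/\F_{q^m}}$ and $w$ is a nonsquare, $\chi(N(w))=-1$; combined with $s$ odd this gives $\chi(D)=-1$. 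Thus $h$ is irreducible over $\F_{q^m}$ and factors over $\F_{q^{2m}}$ as $h(T)=\alpha(T-\tau)(T-\tau^{q^m})$ for some $\tau\in\F_{q^{2m}}\setminus\F_{q^m}$, with the leading coefficient $\alpha$ a square in $\F_{q^m}^*$ (the exponent of $\eta$ in $\alpha$ works out to be even). Hence $\chi(h(T))=\chi'(T-\tau)$ for every $T\in\F_{q^m}$.

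It then remains to find $T$ in the subgroup $H:=\{B^{s-1}:B\in\F_{q^m}^*\}=(\F_{q^m}^*)^{q-1}$ with $T-\tau$ a square in $\F_{q^{2m}}$, equivalently $|H|>|S|$ where $S:=\sum_{T\in H}\chi'(T-\tau)$. Parametrising $T=B^{q-1}$ yields $(q-1)S=\sum_{B\in\F_{q^m}^*}\chi(g(B))$ for $g(B):=(B^{q-1}-\tau)(B^{q-1}-\tau^{q^m})\in\F_{q^m}[B]$, which is squarefree of degree $2(q-1)$. The Weil bound then gives $|(q-1)S|\le(2q-3)q^{m/2}+1$, and an elementary check shows $q^m-1>(2q-3)q^{m/2}+1$ for all $m\ge 3$ and odd $q\ge 3$, producing the desired $T$ and thereby a nontrivial $(u,v)$ solving the collision equation, contradicting planarity. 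The hardest part will be the discriminant computation leading to the clean identity $D=16\eta^{2s}/N(w)^s$, which requires careful bookkeeping of the Frobenius powers of $\eta,w_0,w_1,\mu_k$; the final Weil-bound inequality is routine but is precisely where the hypothesis $m\ge 3$ is used.
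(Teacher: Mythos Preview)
Your proposal is correct and follows essentially the same route as the paper's proof: both reduce planarity (you via the $2$-to-$1$ criterion, the paper via the bilinearization $Q(x+y)-Q(x)-Q(y)$, which yield the identical equation $\Delta_u\Delta_v=w(uv)^{q^k}$) to the existence of a triple $(A,B,C)$ in the image of $\Psi$ from Lemma~\ref{lemma_Psi}, solve the resulting linear system for $A,C$ in terms of $B$, and then use a Weil-type character-sum estimate on a polynomial of degree $2(q-1)$ to produce a $B$ with $C^2-4AB$ a square. The only cosmetic differences are that you raise to the $q^{m-k}$-th power (the paper stays at the $q^k$-level and computes $(C^2-4AB)^{q^k}$), and you package the quadratic-character step via the discriminant/irreducibility of $h$ and the factorisation $\chi'=\chi\circ N$, whereas the paper argues directly that the analogous $H(X)$ is not a square in $\overline{\F}_{q^m}[X]$; note also that your displayed formulas for $A,C,D$ drop some powers of $4$, but these are squares and do not affect the argument.
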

\begin{proof}
Take $\zeta\in\F_{q^{2m}}$ such that $\zeta^{q^m-1}=-1$, and let $\Psi$ be as defined in \eqref{eqn_Psi}. Write $x=x_0\zeta+x_1$, $y=y_0\zeta+y_1$ with $x_i,\,y_i\in\F_{q^m}$, and set $(A,B,C)=\Psi(x,y)$. Then $Q(x+y)=Q(x)+Q(y)$ if and only if
\begin{equation}\label{eqn_ABC0}
-wA^{q^k}+(4\zeta^2 B-w\zeta^{2q^k}B^{q^k})-w\zeta^{q^k} C^{q^k}=0.
\end{equation}
The left hand side is equal to $\frac{1}{2}(Q(x+y)-Q(x)-Q(y))$. By Lemma \ref{lemma_Psi}, $Q(x)$ is planar if and only if there is no triple $(A,B,C)\in\F_{q^m}^3\setminus\{(0,0,0)\}$ such  that  $C^2-4AB$ is a square and \eqref{eqn_ABC0} holds. To show that $Q(x)$ is not  planar, we need to establish the existence of such a triple.

By raising both sides of \eqref{eqn_ABC0} to the $q^m$-th power we get another equation, and together with \eqref{eqn_ABC0} we deduce that
\begin{equation}\label{eqn_AC}
A^{q^k}=2( w^{-q^m}+w^{-1})\zeta^2B-\zeta^{2q^k}B^{q^k},\;
C^{q^k}=2\zeta^{2-q^k}(w^{-1}- w^{-q^m})B,
\end{equation}
We then compute that
\begin{align*}
(C^2&-4AB)^{q^k}=4\zeta^{4-2q^k}(w^{-1}- w^{-q^m})^2B^2-4B^{q^k}\left(2( w^{-q^m}+w^{-1})\zeta^2B-\zeta^{2q^k}B^{q^k}\right)\\
&=4\zeta^{4-2q^k}B^2\left((w^{-1}- w^{-q^m})^2-2( w^{-q^m}+w^{-1})\zeta^{2(q^k-1)}B^{q^k-1}+\zeta^{4(q^k-1)}B^{2(q^k-1)}\right)\\
&=4\zeta^{4-2q^k}B^2\left((w^{-1}+w^{-q^m}-\zeta^{2(q^k-1)}
B^{q^k-1})^2-4w^{-1-q^m}\right).
\end{align*}
Since $\zeta^2$ is a nonsquare in $\F_{q^m}^*$ and $\gcd(q^k-1,q^m-1)=q-1$, we need to find $z\in\F_{q^m}^*$ such that $H(z):=(w^{-1}+w^{-q^m}-\zeta^{2(q^k-1)}z^{q-1})^2-4w^{-1-q^m}$ is a nonsquare. Then by taking $B\in\F_{q^m}$ such that $B^{q^k-1}=z^{q-1}$ and setting $A,\,C$ as in \eqref{eqn_AC}, we get the desired triple. It remains to establish the existence of such an element.

Let $\rho$ be the multiplicative character of order two of $\F_{q^m}^*$, i.e., $\rho(x)=1$ if $x$ is a nonzero square and $\rho(x)=-1$ otherwise. We extend it to $\F_{q^m}$ by setting $\rho(0)=0$. Let $\overline{\F}_{q^m}$ be the algebraic closure of $\F_{q^m}$. If $H(X)=h(X)^2$ for some polynomial $h(X)\in\overline{\F}_{q^m}[X]$, then
\[
(w^{-1}+w^{-q^m}-\zeta^{2(q^k-1)}X^{q-1}+h(X))(w^{-1}+w^{-q^m}-\zeta^{2(q^k-1)}X^{q-1}-h(X))=4w^{-1-q^m}.
\]
It implies that the two factors on the left hand side both have degree $0$, which is impossible. We can now apply \cite[Theorem 6.2.2]{handbookff} to see that
$|\sum_{z\in\F_{q^m}}\rho(H(z))|\le (2(q-1)-1)q^{m/2}$.
It is straightforward to check that $(2(q-1)-1)q^{m/2}<q^m-1$  holds for all $q$ odd and $m\ge 3$, so there exists $z\in\F_{q^m}^*$ such that $H(z)$ is a nonsquare. This completes the proof.
\end{proof}

\begin{thm}\label{thm_Q}
Let $k,m$ be positive integers with $m\ge 3$, $\gcd(k,m)=1$, and let $q=p^e$ with $p$ an odd prime.  Suppose that either of the following holds: (1) $q\ge 4n^2-8n+2$, (2) $p>2(em)^2-(4-2\sqrt{3})em+(3-2\sqrt{3})$, (3) $m=3$. Then $Q(X)=(aX+bX^{q^m})^2-wX^{2q^k}$ is planar over $\F_{q^{2m}}$ if and only if $ab=0$, where $a,b\in\F_{q^{2m}}$ and $w$ is a nonsquare in $\F_{q^{2m}}^*$.
\end{thm}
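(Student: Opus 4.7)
The $(\Leftarrow)$ direction is a brief verification. If $b=0$, then $Q(X)=a^2 X^2-wX^{2q^k}$; for each $y\ne 0$ the first-difference map $x\mapsto 2(a^2 xy-w(xy)^{q^k})$ is $\F_q$-linear, and a nontrivial element of its kernel would force $a^2/w$ to be a $(q^k-1)$-th power in $\F_{q^{2m}}^*$. But $q$ is odd, so every $(q^k-1)$-th power is a square, while $a^2/w$ is a nonsquare. The case $a=0$ reduces to $b=0$ via the bijection $X\mapsto X^{q^m}$.

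For $(\Rightarrow)$, I would argue by contradiction: suppose $Q$ is planar with $ab\ne 0$. The first step is to observe that the associated commutative semifield $S_Q$ is a rank two commutative semifield over $\F_{q^m}$. Writing the presemifield product as $x\star y=L_1(x)L_1(y)-w(xy)^{q^k}$ with $L_1(X)=aX+bX^{q^m}$, for every $\alpha\in\F_{q^m}$ the identities $L_1(\alpha x)=\alpha L_1(x)$ and $(\alpha x)^{q^k}=\alpha^{q^k}x^{q^k}$ give $(\alpha x)\star y=x\star(\alpha y)$, which on passing to a principal isotope with identity places $\F_{q^m}$ inside the middle nucleus $\cN_m(S_Q)$. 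Since $|S_Q|=q^{2m}=|\F_{q^m}|^2$, $S_Q$ is a rank two commutative semifield.

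Next I would apply Lemma \ref{lemma_N}: $S_Q$ is either isotopic to a finite field or $\cN(S_Q)=\F_q$. In the finite-field case, a coefficient-matching analysis analogous to the proof of Lemma \ref{lemma_ps}---carried out after the $\F_q$-linear bijection $Y=X^{q^k}$ that puts $Q$ into the form $\widehat{L}(Y)^2-wY^2$ with $\widehat{L}(Y)=aY^{q^{2m-k}}+bY^{q^{m-k}}$---forces $\widehat{L}$ to be a monomial, hence $ab=0$, contradicting our assumption. So $\cN(S_Q)=\F_q$. The three alternative hypotheses in the theorem are precisely the sufficient conditions of Theorem \ref{thm_rtcs} (applied with $s=m$ and $n=em$), which then forces $S_Q$ to be isotopic either to a finite field (already ruled out) or to a Dickson semifield.

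The main obstacle is to derive a contradiction in the Dickson case. Since the relative norm $N:\F_{q^{2m}}^*\to\F_{q^m}^*$ is surjective I can choose $\lambda\in\F_{q^{2m}}^*$ with $N(\lambda)=N(a)/N(b)$ and then solve $c^{q^m-1}=-a/(b\lambda)$ for some $c\in\F_{q^{2m}}^*$; with $d=bc^{q^m}$, the substitution $X\mapsto cX$ followed by scaling by $d^{-2}$ converts $Q$ into the equivalent planar function $\widetilde{Q}(X)=(X^{q^m}-\lambda X)^2-w'X^{2q^k}$, with $w'=wc^{2q^k}/d^2$ still a nonsquare. The Dickson isotopy further normalizes $\lambda$: the additional freedom in the Dickson parametrization allows a second reduction bringing $\lambda$ into $\F_q^*$, at which point Lemma \ref{lemma_specialQ} applies and contradicts planarity. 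The hardest part is this final normalization of $\lambda$; an alternative is to generalize the character-sum argument of Lemma \ref{lemma_specialQ} itself to arbitrary $\lambda\in\F_{q^{2m}}^*$, repeating the decomposition $x=x_0\zeta+x_1$ with $\zeta^{q^m-1}=-1$ and verifying that the bound $(2(q-1)-1)q^{m/2}<q^m-1$ (which is independent of $\lambda$) still produces a suitable nonsquare value of the analogue of $H(z)$.
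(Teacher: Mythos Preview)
Your high-level plan matches the paper's: show that $S_Q$ is a rank two commutative semifield, invoke the Blokhuis--Lavrauw--Ball classification (Theorem~\ref{thm_rtcs}) together with Lemma~\ref{lemma_N}, and then eliminate the finite-field and Dickson outcomes separately. Your middle-nucleus argument $(\alpha x)\star y=x\star(\alpha y)$ for $\alpha\in\F_{q^m}$ is a clean way to see the RTCS structure and is in fact slicker than what the paper does. The paper instead builds the RTCS structure \emph{explicitly}: using Lemma~\ref{lemma_Psi} it shows that a certain $\F_q$-linear map $\Upsilon:\F_{q^m}^2\to\F_{q^{2m}}$ is bijective, and this produces Cohen--Ganley functions $f,g:\F_{q^m}\to\F_{q^m}$ with $S_Q$ isotopic to $S(-g,-f)$ in the sense of Theorem~\ref{thm_rtcs_def}, together with an explicit functional equation (the analogue of~\eqref{eqn_ABC}) linking $f,g$ to $a,b,w$. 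That explicit pair $(f,g)$ is not a mere convenience---it is what drives both remaining cases.

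For the finite-field case, your appeal to ``the argument of Lemma~\ref{lemma_ps}'' is not quite enough as stated: that argument allows a genuine two-term outcome for $M_2$ and $\widehat L\circ M_2$ sharing the same support, which does \emph{not} automatically make $\widehat L$ a monomial. One can push through in your setting by checking that the specific shape $\widehat L(Y)=aY^{q^{2m-k}}+bY^{q^{m-k}}$ is incompatible with the two-term support constraints (using $\gcd(k,m)=1$ and $m\ge3$), but this needs to be said. The paper avoids this detour entirely: since $S(-g,-f)$ is isotopic to a field iff $g(u)=cu$, $f(u)=du$, plugging these into the explicit equation~\eqref{eqn_ABC} forces $\beta=\pm\zeta$, i.e.\ $ab=0$.

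The real gap is the Dickson case. Neither of your two suggestions works as written. The claim that ``the Dickson isotopy further normalizes $\lambda$ into $\F_q^*$'' has no content: a semifield isotopy between $S_{\widetilde Q}$ and a Dickson semifield does not give you an equivalence of planar functions that moves $\lambda$, and even $\lambda\in\F_q^*$ would not let you cite Lemma~\ref{lemma_specialQ}, which is only the case $\lambda=1$. The alternative of ``generalizing Lemma~\ref{lemma_specialQ} to arbitrary $\lambda$'' runs into a structural problem: for $\lambda\notin\F_{q^m}$ the $\zeta$-decomposition of $x^{q^m}-\lambda x$ does not separate into $\F_{q^m}$-components, and even for $\lambda\in\F_{q^m}$ the analogue of~\eqref{eqn_ABC0} picks up linear terms in $A$ and $C$ (not just in $B$), so the elimination that produced~\eqref{eqn_AC} and a one-variable $H(z)$ no longer goes through directly. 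The paper's route here is substantially different and is where most of the work lies: writing the isotopy $L(x\circ y)=N(x)\circ'(\alpha N(y))$ to the Dickson semifield $S(0,\lambda x^\theta)$ in $\zeta$-coordinates and specializing $x_1=y_1=0$, $x_1=y_0=0$, $x_0=y_0=0$ yields the system~\eqref{eqn_x0y0}--\eqref{eqn_x1y1}, from which a Remark~\ref{remark_size}-type analysis forces all four components $L_1,\dots,L_4$ of $N$ to be monomials with the \emph{same} Frobenius twist $p^i$. Equation~\eqref{eqn_c} then gives $f(u)=d_1g(u)+d_2u$, and feeding this back into~\eqref{eqn_ABC} produces a two-term expression for $g$, so that $g(u)^2-4uf(u)$ factors as $z^2$ times an explicit quadratic in $z^{q-1}$; only at that point does the character-sum bound of Lemma~\ref{lemma_specialQ} re-enter to supply the contradiction. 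In short, the Cohen--Ganley pair $(f,g)$ you bypassed is exactly the handle the paper uses to tame the Dickson case.
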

\begin{proof}
Take $\zeta\in\F_{q^{2m}}$ such that $\zeta^{q^m-1}=-1$, and let $\Psi$ be as defined in \eqref{eqn_Psi}. Set $s:=a+b$, $\beta:=(a-b)\zeta$. The case $s=0$ has been handled in Lemma \ref{lemma_specialQ}, so assume that $s\ne 0$. By multiplying $Q(x)$ by a nonzero constant if necessary, we set $s=1$ without loss of generality. If $\beta$ is in $\F_{q^m}$, then $a=\frac{1+\beta\zeta^{-1}}{2}$, $b=\frac{1-\beta\zeta^{-1}}{2}$, $-ab^{-1}=\frac{\beta+\zeta}{\beta-\zeta}=(\beta-\zeta)^{q^m-1}$, and $a^{-2}(\beta-\zeta)^{-2}Q\left((\beta-\zeta) X\right)$ is of the form in Lemma \ref{lemma_specialQ}. Therefore, we only need to consider the case $s=1$ and $\beta\not\in\F_{q^m}$. As in the case of Lemma \ref{lemma_specialQ}, $Q(x)$ is planar if and only if there is no triple $(A,B,C)\in\F_{q^m}^3\setminus\{(0,0,0)\}$ such  that  $C^2-4AB$ is a square and
\begin{equation}\label{eqn_ABC1}
 A-wA^{q^k}+\beta^2 B-w\zeta^{2q^k}B^{q^k}+\beta C-w\zeta^{q^k} C^{q^k}=0.
\end{equation}
The left hand side equals $\frac{1}{2}(Q(x+y)-Q(x)-Q(y))$ in the case $(A,B,C)=\Psi(x,y)$.

Assume that $Q(x)$ is planar. We claim that the $\F_q$-linear map
\[
\Upsilon:\,\F_{q^m}^2\mapsto\F_{q^{2m}},\,(A,C)\rightarrow  A-wA^{q^k}+ \beta C-w\zeta^{q^k}C^{q^k}
 \]
is a bijection. Otherwise, there exists a pair $(A,C)\ne (0,0)$ such that \eqref{eqn_ABC1} holds with $B=0$. Since $C^2-4\cdot A\cdot 0=C^2$ is trivially a square, this is impossible by our assumption. Therefore, for each $B=u\in\F_{q^m}$, there is a unique pair $(A,C)=(f(u),g(u))$ such that $\Upsilon(f(u),g(u))=-\beta^2 u+w\zeta^{2q^k}u^{q^k}$, where $f(X),g(X)\in\F_{q^m}[X]$.  It is clear that both $f$ and $g$ are $\F_q$-linear. By the definition of $f$ and $g$ we have
\begin{align}\label{eqn_ABC}
f(u)-wf(u)^{q^k}+\beta^2 u-w\zeta^{2q^k}u^{q^k}+\beta g(u)-w\zeta^{q^k} g(u)^{q^k}=0,\quad\forall\, u\in\F_{q^m}.
\end{align}
If $f(u)=0$ for some $u\ne 0$, then $(A,B,C)=(0,u,g(u))$ satisfies \eqref{eqn_ABC1} and $C^2-4AB=g(u)^2$ is a square. This contradicts our assumption, so the map $u\mapsto f(u)$ is a bijection.

By the planarity of $Q(x)$, $g(u)^2-4uf(u)$ is a nonsquare for all $u\in\F_{q^m}^*$, so  $S(-g,-f)=(\F_{q^{2m}},+,\circ)$ as defined in Theorem \ref{thm_rtcs_def} with $t=\zeta$ is a RTCS. Recall that for $x,y\in\F_{q^{2m}}$ and $(A,B,C)=\Psi(x,y)$, we have
\[
x\circ y=(-g(B)+C)\zeta-f(B)+A.
\]
Let $M$ be the the $\F_q$-linear map such that $M(z_0\zeta+z_1):=\Upsilon(z_1,z_0)$ for $z_0,z_1\in\F_{q^m}$. It is nondegenerate, and with $(A,B,C)=\Psi(x,y)$ we have
\begin{align*}
M(x\circ y)&=\Upsilon(-f(B)+A,-g(B)+C)=-\Upsilon(f(B),g(B))+\Upsilon(A,C)\\
&=\beta^2 B-w\zeta^{2q^k}B^{q^k}+\Upsilon(A,C)=\frac{1}{2}(Q(x+y)-Q(x)-Q(y)).
\end{align*}
Therefore, the semifield $S_Q$ defined by $Q$ is isotopic to $S(-g,-f)$.

Under the conditions in the theorem, $S(-g,-f)$ is isotopic to either a finite field or a Dickson semifield by Theorem \ref{thm_rtcs}. By \cite[Example 2]{CG}, $S(-g,-f)$ is isotopic to a finite field if and only if $g(u)=cu$ and $f(u)=du$, where $c^2-4d$ is a nonsquare. In this case, \eqref{eqn_ABC} becomes
$(d+\beta c+\beta^2)u=w(d +\zeta  c+\zeta^{2 })^{q^k}u^{q^k}.$ It holds for all $u$, so $d +\zeta  c+\zeta^{2 }=0$ and $d+\beta c+\beta^2=0$. Since $\zeta^2\in\F_{q^m}$, we deduce that $c=0$, $d=-\zeta^2$ and $\beta=\pm\zeta$. It follows from $a+b=1$ and $a-b=\beta\zeta^{-1}$ that $(a,b)=(1,0)$ or $(0,1)$.

From now on, assume that $S(-g,-f)$ is not isotopic to a finite field but is isotopic to a Dickson semifield $S(0,\lambda x^{\theta})=(\F_{q^{2m}},+,\circ')$ as defined in Theorem \ref{thm_rtcs_def} with $t=\zeta$, where $\lambda$ is a nonsquare in $\F_{q^m}$ and $\theta\in\Aut(\F_{q^{2m}})$. By Lemma \ref{lemma_N}, the semifield $S_Q$ and so $S(-g,-f)$ has nucleus $\F_{q}$. It follows that $x^\theta=x^{q^\ell}$ for some $1\le\ell\le m-1$ such that $\gcd(\ell,m)=1$, cf. \cite[Theorem 10.16]{handbook}. By \cite[Theorem 2.6]{coulter}, there exist nondegenerate linear maps $L,N:\,\F_{q^{2m}}\mapsto\F_{q^{2m}}$ and $\alpha\in\F_{q^m}^*$ such that
\begin{equation}\label{eqn_iso}
L(x\circ y)=N(x)\circ' (\alpha N(y)).
\end{equation}
There exist linear maps $L_i:\,\F_{q^m}\mapsto\F_{q^m}$, $1\le i\le 4$, such that $N(x)=(L_1(x_0)+L_2(x_1))\zeta+L_3(x_0)+L_4(x_1)$ for $x=x_0\zeta+x_1$. Write $L_1(X)=\sum_{i=0}^{em-1}a_iX^{p^i}$ and $L_3(X)=\sum_{i=0}^{em-1}b_iX^{p^i}$, with $a_i,b_i\in\F_{q^m}$. We consider \eqref{eqn_iso} in three cases.
\begin{enumerate}
\item In the case $x_1=y_1=0$, it shows that $L(-g(x_0y_0)\zeta-f(x_0y_0))$ is equal to
\begin{equation*}
\lambda\alpha^{q^\ell}L_1(x_0)^{q^\ell}L_1(y_0)^{q^\ell}+\alpha L_3(x_0)L_3(y_0)+\alpha(L_1(x_0)L_3(y_0)
+L_3(x_0)L_1(y_0))\zeta.
\end{equation*}
The coordinates with respect to the basis $\{1,\zeta\}$ in the above expression are polynomials  in $x_0y_0$. By comparing coefficients we get $\lambda\alpha^{q^\ell}a_i^{q^\ell}a_j^{q^\ell}+\alpha b_{i+el}b_{j+el}=0$, and $a_{i+el}b_{j+el}+a_{j+el}b_{i+el}=0$ for $i\ne j$. The subscripts are read modulo $em$ here. We observe that   $-a_ia_jb_{i+el}b_{j+el}$ is a nonsquare and $-a_{i+el}a_{j+el}b_{i+el}b_{j+el}$ is a square, yielding that $a_ia_{i+el}a_ja_{j+el}$ is a nonsquare in the case $i\ne j$ and the terms involved are nonzero. Assume that $a_ua_v\ne 0$ for $0\le u<v\le em-1$. We claim that none of the elements in $\{a_{u+rel},\,b_{v+sel}:\,0\le r,s\le m-1\}$ is zero. It follows from the first equation with $(i,j)=(u,v)$ that $b_{u+el}b_{v+el}\ne 0$, and follows  from the second equation with $(i,j)=(u-e\ell,u)$ and $(i,j)=(v-e\ell,v)$ that none of $a_{u+el},a_{v+el},b_u,b_v$ is zero. The claim then follows by induction. For $0\le i<j\le m-1$, exactly one of $a_{u+iel}a_{u+(i+1)el}$ and $a_{u+jel}a_{u+(j+1)el}$ is a square and the other is a nonsquare by our previous observation. This is impossible when $m\ge 3$. We conclude that $L_1$ is a monomial, and similarly $L_3$ is a monomial. Write $L_1(x_0)=c_1x_0^{p^{i}}$ and $L_3(x_0)=c_3x_0^{p^{i}}$ for some $i$ and constants $c_1,c_3$. Since $N$ is nondegenerate, $(c_1,c_3)\ne(0,0)$. In this case, \eqref{eqn_iso} reduces to
\begin{equation}\label{eqn_x0y0}
-L(g(u)\zeta+f(u))=2\alpha c_1c_3 u^{p^i}\zeta+\alpha c_3^2u^{p^i}+\lambda\alpha^{q^\ell}c_1^{2q^\ell}u^{q^\ell p^i},\;\forall\, u\in\F_{q^m}.
\end{equation}
\item In the case $x_1=y_0=0$, \eqref{eqn_iso} gives that $L(x_0y_1\zeta)$ is equal to
\begin{align*}
\alpha x_0^{p^i}(c_1L_4(y_1)  +c_3L_2(y_1))\zeta+\alpha c_3x_0^{p^i}L_4(y_1)+\lambda\alpha^{q^\ell}c_1^{q^\ell}x_0^{p^iq^\ell}L_2(y_1)^{q^\ell}.
\end{align*}
As in the previous case, we know that $\alpha c_3x_0^{p^i}L_4(y_1)+\lambda\alpha^{q^\ell}c_1^{q^\ell}x_0^{p^iq^\ell}L_2(y_1)^{q^\ell}$ is a polynomial in $x_0y_1$. This is possible if and only if there exists constants $c_2,\,c_4$ such that $L_2(x)=c_2x^{p^i}$ and $L_4(x)=c_4x^{p^i}$ for the same $i$. The nondegeneracy of $N$ requires that $c_1c_4-c_2c_3\ne 0$. In this case, \eqref{eqn_iso} takes the form
\begin{equation}\label{eqn_x1y0}
L(u\zeta)=\alpha(c_1c_4+c_2c_3)u^{p^i}\zeta+\alpha c_3c_4u^{p^i}+\lambda \alpha^{q^\ell} c_1^{q^\ell}c_2^{q^\ell}u^{q^\ell p^i},\;\forall\,u\in\F_{q^m}.
\end{equation}
\item In the case $x_0=y_0=0$, \eqref{eqn_iso} gives that
\begin{equation}\label{eqn_x1y1}
L(u)=2\alpha c_2c_4u^{p^i}\zeta+\alpha c_4^2u^{p^i}
+\lambda\alpha^{q^\ell}c_2^{2q^\ell}u^{p^iq^\ell},\;\forall\,u\in\F_{q^m}.
\end{equation}
\end{enumerate}
We compute $L(g(u)\zeta)+L(f(u))$ using \eqref{eqn_x1y0} and \eqref{eqn_x1y1} and add it to
\eqref{eqn_x0y0} to cancel out the left hand side. The coordinate of $\zeta$ on the right hand side gives that
\begin{equation}\label{eqn_c}
2c_2c_4f(u)^{p^i}+(c_1c_4+c_2c_3)g(u)^{p^i}+2c_1c_3u^{p^i}=0.
\end{equation}

We claim that none of $c_1$, $c_2$, $c_3$, $c_4$ is zero. If $c_2c_4=0$, then $c_1c_4+c_2c_3\ne 0$ and \eqref{eqn_c} gives that $g(u)=c_5u$ for some constant $c_5$. The equation \eqref{eqn_ABC} now takes the form
\[
f(u)-wf(u)^{q^k}=-(\beta^2+\beta c_5) u+(w\zeta^{2q^k}+w\zeta^{q^k} c_5^{q^k})u^{q^k}.
\]
We obtain another equation by raising both sides to the $q^m$-th power, and then deduce that both $f(u)$ and $f(u)^{q^k}$ are linear combinations of $u$ and $u^{q^k}$. Since $\gcd(m,k)=1$, this is possible only if $f$ has degree $1$. However, $S(-g,-f)$ is then isotopic to a finite field: a contradiction. Hence $c_2c_4\ne 0$. If $c_1c_3=0$, then with the role of $z=f(u)$ and $u$ interchanged and $g$ considered as a function of $z$, we derive the same  contradiction. This proves the claim.

From \eqref{eqn_c}, we see that  $f(u)=d_1g(u)+d_2u$ for some constants $d_1,\,d_2\in\F_{q^m}$ with $d_2\ne 0$. Canceling $f$ from \eqref{eqn_ABC} by substitution, we get
\begin{equation}\label{eqn_gonly}
(\beta+d_1)g(u)-w(d_1+\zeta)^{q^k}g(u)^{q^k}=-(d_2+\beta^2)u+w(d_2+\zeta^{2})^{q^k}u^{q^k}.
\end{equation}
We claim that $\frac{w(d_1+\zeta)^{q^k}}{\beta+d_1}\in \F_{q^m}^*$. Otherwise, raising both sides of \eqref{eqn_gonly} to the $q^m$-th power, we get another equation that is linear independent with \eqref{eqn_gonly}. We then deduce that both $g(u)$ and $g(u)^{q^k}$ are linear combinations of $u,u^{q^k}$. This is possible only if $g(u)=c_6u$ for a constant $c_6$, but then $f(u)$ has degree $1$: a contradiction. This proves the claim.

After dividing both sides of \eqref{eqn_gonly} by $\beta+d_1$, the left hand side is in $\F_{q^m}[u]$, so should be the right hand side. This gives that both $\frac{d_2+\beta^2}{d_1+\beta}$ and $ \frac{d_2+\zeta^{2}}{d_1+\zeta} $ are in $\F_{q^m}$. Since $d_2+\zeta^{2}$ is in $\F_{q^m}$ but $d_1+\zeta$ is not, we must have $d_2=-\zeta^2$. We have $d_2+\beta^2\ne 0$, since otherwise $\beta^2=\zeta^2$ and $S_Q$ is isotopic to a finite field as we have shown. Now set $z:=g(u)$. The equation \eqref{eqn_gonly} gives that $u=h_1z+h_2z^{q^k}$ for some constants $h_1,h_2\in\F_{q^m}^*$. In particular, this shows that $u\mapsto z=g(u)$ is a bijection. It follows that $f(u)=h_3z-\zeta^2h_2z^{q^k}$ with $h_3=d_1-\zeta^2 h_1$, and we have
\begin{align*}
g(u)^2-4uf(u)&=z^2-(h_1z+h_2z^{q^k})(h_3z-\zeta^2h_2z^{q^k})\\
&=z^2(h_5+h_6z^{q^k-1}+\zeta^2 h_2^2z^{2(q^k-1)}),
\end{align*}
where $h_5=1-h_1h_3$, $h_6=\zeta^2h_1h_2-h_2h_3$.

Set $H(X):=h_5+h_6X^{q-1}+\zeta^2 h_2^2X^{2(q-1)}$. We have $H(X)=\left(\zeta h_2X^{q-1}+\frac{h_6}{2\zeta h_2}\right)^2+\frac{4\zeta^2h_2^2h_5-h_6^2}{4\zeta^2h_2^2}$.
We directly compute that $4\zeta^2 h_2^2h_5-h_6^2=(4\zeta^2-d_1^2)h_2^2\ne 0$, so $H(X)$ is not a square in $\overline{\F}_{q^m}[X]$ by the same argument in the proof of Lemma \ref{lemma_specialQ}, where $\overline{\F}_{q^m}$ is the algebraic closure of $\F_{q^m}$. The  same exponential sum bound there  establishes the existence of $z$ such that $h_5+h_6z^{q^k-1}+\zeta^2 h_2^2z^{2(q^k-1)}$ is a square. If we take $u$  such that $z=g(u)$, then $g(u)^2-4uf(u)$ is a nonsquare. This contradiction completes the proof.
\end{proof}

In the proof of Theorem \ref{thm_Q}, we do not directly consider the planarity of $Q$. Instead, we show that the semifield $S_Q$ is a RTCS and make use of the classification results of such semifields obtained in \cite{rtcs_char1,rtcs_char2}. This is in the same spirit as in Section \ref{sect_function}, where we make use of Menichetti's classification of generalized twisted fields.  Coulter and Henderson have used this approach to characterize planar functions of certain form over $\F_{q^3}$, cf. \cite{ch_planar}. The list of known commutative semifields is short \cite{helle_sur,fs_cur,cs_zp}, and Minami and Nakagawa \cite{naka} have determined the polynomial forms of certain commutative semifields. It may be of some interest to examine the known commutative semifields one by one to check whether their isotopes will yield planar functions of the form $L(X)^2-wX^2$, but the answer is most probably negative. We need new techniques to prove or disprove the planarity of the functions of interest.

\section{Low-dimensional $\cC$-planes of even order}

In this section, we consider type $\cC$ spreads of even order. Let $q$ be even. Recall that $\beta$ is an element of order $(q^n+1)(q-1)$ in $\F_{q^{2n}}$, and $\Theta(\beta)\in\Gamma L(1,q^{2n})$ is defined by $\Theta(\beta)(x)=\beta x$.  A type $\cC$ spread $\cS$ of order $q^n$ with kernel $\F_q$ is isomorphic to the orbit of $W=\{L(x)+\delta x:\,x\in\F_{q^n}\}$ under the group $\la\Theta(\beta)\ra$, where $L(X)$ is a monic reduced $q$-polynomial and $\delta\in\F_{q^{2n}}\setminus\F_{q^n}$.
By Lemma \ref{lemma_key}, $Q(X)=(L(X)+\delta X)(L(X)+\delta^{q^n} X)\in\F_{q^n}[X]$ is a permutation polynomial of $\F_{q^n}$. For each $y\in \F_{q^{n}}$, we define the quadratic form $Q_y(x):=\tr_{\F_{q^n}/\F_q}\left((\delta+\delta^{q^n})^{-1}yQ(x)\right)$, i.e.,
\begin{equation}\label{eqn_Qy}
Q_y(x):=\tr_{\F_{q^n}/\F_q}\left((\delta+\delta^{q^n})^{-1}yL(x)^2+ (\delta^{-1}+\delta^{-q^n})^{-1}y
 x^2+yxL(x)\right).
\end{equation}
By Lemma \ref{lemma_perm_rank}, $Q_y$ has odd rank for any $y\ne 0$. Its associated bilinear form is
\[
B_y(u,v)=\tr_{\F_{q^n}/\F_q}(yuL(v)+yvL(u))=\tr_{\F_{q^n}/\F_q}\left((\tilde{L}(yu)+yL(u))v\right),
\]
where $\tilde{L}$ is the adjoint polynomial of $L$. The radical
$\rad(Q_y)=\{u\in \F_{q^n}:\,\tilde{L}(yu)+yL(u)=0\}$. By Theorem \ref{thm_N0}, $Q_y(\rad(Q_y))\ne \{0\}$ for each $y\ne 0$. We will fix these notation throughout this section. We first examine two special cases in the following examples.
\begin{example}\label{ex1}
Assume that $n$ is even, and $L(x)=x^{q^k}$ with $\gcd(n,k)=1$. In this case, the adjoint polynomial $\tilde{L}(X)=X^{q^{n-k}}$. Take $y$ to be a primitive element. Since $\gcd(q^{n-k}-q^k,q^n-1)=q^2-1$ and $\gcd(1-q^{n-k},q^n-1)=q-1$,  we have
$\rad(Q_y)=\{u\in \F_{q^n}:\,u^{q^{n-k}-q^k}=y^{1-q^{n-k}}\}=\{0\}$.
Hence, $Q_y$ has rank $n$ and $Q(X)$ is not a permutation polynomial  of $\F_{q^n}$.
\end{example}

\begin{example}\label{ex2}
Assume that $n$ is even, and $L(x)=\tr_{\F_{q^n}/\F_q}(x)$. In this case, $\tilde{L}(X)=L(X)$. Set $\Delta:=(\delta^{-1}+\delta^{-q^n})^{-1}$.  Take $y\in\F_{q^2}$ such that $y+y^q=\Delta$ if $\Delta\in\F_q^*$, and take $y=\Delta^{-1}$ otherwise. Then $y\not\in\F_q$ and $\rad(Q_y)=\{u:\,\tr_{\F_{q^n}/\F_q}(yu)=\tr_{\F_{q^n}/\F_q}(u)=0\}$.
For $u\in\rad(Q_y)$, we have
$Q_y(u)=\tr_{\F_{q^n}/\F_q}\left(\Delta y\cdot u^2\right)$. It is clear that $ \Delta y \in \F_q+\F_q\cdot y^2$ for the chosen $y$, so $Q_y$ is constantly zero on $\rad(Q_y)$. It follows that $Q(X)$ is not a permutation polynomial of $\F_{q^n}$.
\end{example}

The main objective of this section is to characterize the case $n=3$ and $n=4$ completely. The case $n=2$ can be reduced to either of the above examples with a proper choice of $\delta$. In the case $n=3$, there is the construction by Kantor in \cite{kantor_even}. We start with the case $n=4$. The strategy is to show that there is a trivial $\rad(Q_y)$ except the cases where it can be reduced to the second example above.

\begin{thm}\label{thm_even_4dim}
If $q$ is even, then there is no type $\cC$ spread  with ambient space $(\F_{q^8},+)$ and kernel $\F_q$.
\end{thm}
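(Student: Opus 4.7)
My plan is to argue by contradiction: assume such a spread exists and exhibit a nonzero $y \in \F_{q^4}^*$ for which the quadratic form $Q_y$ of \eqref{eqn_Qy} has \emph{even} rank, contradicting Lemma \ref{lemma_perm_rank}. Write $L(X) = \sum_{i=0}^{3} c_i X^{q^i}$ with $c_3 = 1$ (monicness). Since $Q$ is a permutation polynomial, Lemma \ref{lemma_perm_rank} forces $Q_y$ to have odd rank for every $y \in \F_{q^4}^*$. Theorem \ref{thm_N0} tells us that $4 - \dim_{\F_q}\rad(Q_y)$ is even, and odd rank additionally excludes $\rad(Q_y) = \{0\}$; hence $\dim_{\F_q}\rad(Q_y) \in \{2, 4\}$ for every $y \neq 0$.

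The first step is to compute the $q$-polynomial $\Phi_y(u) = \tilde L(yu) + y L(u)$ whose kernel is $\rad(Q_y)$. In characteristic $2$ the $u$-term $2c_0 y \cdot u$ vanishes, leaving $\Phi_y(u) = d_1 u^q + d_2 u^{q^2} + d_3 u^{q^3}$ with $d_1 = c_3^q y^q + c_1 y$, $d_2 = c_2^{q^2} y^{q^2} + c_2 y$, $d_3 = c_1^{q^3} y^{q^3} + c_3 y$. Using $x^{q^4} = x$ on $\F_{q^4}$ one checks $d_3^q = d_1$, $d_1^{q^3} = d_3$, and $d_2^{q^2} = d_2$, so the matrix $M$ of Lemma \ref{lemma_M_rank} is symmetric with zero diagonal, i.e. alternating in characteristic $2$. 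Its rank is therefore even and $\det M = \mathrm{Pf}(M)^2$ with
\[
\mathrm{Pf}(M) = d_1 d_1^{q^2} + d_2 d_2^{q} + d_3 d_1^{q}.
\]
The condition $\dim_{\F_q}\rad(Q_y) \geq 2$, i.e. $\rank M \le 2$, now reads $\mathrm{Pf}(M)(y) = 0$ for every $y \in \F_{q^4}^*$.

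Next I expand $\mathrm{Pf}(M)$ as a polynomial in $y$. Its six monomials have distinct exponents in the set $\{q^i + q^j : 0 \leq i < j \leq 3\}$, so the degree is at most $q^3 + q^2$, which is strictly less than $q^4 - 1$ for $q \geq 2$. A polynomial of this degree that vanishes on all of $\F_{q^4}^*$ must be identically zero. Equating each of the six coefficients to zero in characteristic $2$ and using $c_3 = 1$ collapses the system to the two independent identities
\[
c_1^{q^2+1} = 1, \qquad c_2^{q+1} = c_1^{q},
\]
the remaining four relations being Frobenius conjugates of these.

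The last step, which I expect to be the main obstacle, is to exhibit an explicit $y_0 \in \F_{q^4}^*$ for which $Q_{y_0}$ vanishes identically on $\rad(Q_{y_0})$; by Theorem \ref{thm_N0} such a $y_0$ gives $Q_{y_0}$ of even rank ($0$ or $2$), contradicting the odd-rank conclusion. The extracted constraints place $c_1$ in the norm-one subgroup $\{z \in \F_{q^4}^* : z^{q^2+1} = 1\}$ and determine $c_2$ (up to a $(q+1)$-th root of unity) from $c_1$, so $L$ acquires a twisted-trace structure closely echoing Example \ref{ex2}. Taking $y_0 \in \F_{q^2}^*$ collapses $d_2(y_0)$ to $\tr_{\F_{q^4}/\F_{q^2}}(c_2)\, y_0 \in \F_{q^2}$ and makes $\rad(Q_{y_0})$ computable; on this radical the diagonal contributions $\tr_{\F_{q^4}/\F_q}\bigl((\delta+\delta^{q^4})^{-1} y_0 L(u)^2\bigr)$ and $\tr_{\F_{q^4}/\F_q}\bigl((\delta^{-1}+\delta^{-q^4})^{-1} y_0 u^2\bigr)$ are expected to cancel the polar term $\tr_{\F_{q^4}/\F_q}(y_0 u L(u))$, replicating the mechanism of Example \ref{ex2}. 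The delicate point is this final cancellation: it requires a careful joint analysis of the algebraic constraints on $c_1, c_2$ and of the role of $\delta \in \F_{q^8}\setminus\F_{q^4}$ entering through $A$ and $B$ in \eqref{eqn_Qy}, and one must verify that some $y_0$ always works regardless of the admissible $\delta$.
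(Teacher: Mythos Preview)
Your setup and the generic case are essentially the paper's own argument: compute the Dickson matrix $M$ attached to $\tilde L(yu)+yL(u)$, observe that in characteristic~$2$ the constant coefficient drops out so $M$ is alternating, and note that $\det M=\mathrm{Pf}(M)^2$ is a polynomial in $y$ of degree $q^3+q^2<q^4$. If that polynomial is not identically zero, some $y$ makes $\rad(Q_y)=\{0\}$, forcing $Q_y$ to have even rank~$4$; otherwise all six coefficients vanish. Your two independent identities $c_1^{1+q^2}=1$ and $c_2^{\,1+q}=c_1^{\,q}$ are exactly the paper's $b^{1+q^2}=1$, $a^{1+q}=b^{q}$ in its notation $L(X)=X^{q^3}+aX^{q^2}+bX^q$. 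So far the two proofs coincide.

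The gap is the last step. You stop at ``$L$ acquires a twisted-trace structure'' and propose to hunt for a suitable $y_0\in\F_{q^2}^*$ directly, acknowledging the cancellation is delicate. The paper avoids this computation entirely by a change of variables: from $c_1^{1+q^2}=1$ and $c_2^{1+q}=c_1^{q}$ it deduces $c_2^{(1+q)(1+q^2)}=1$, hence $c_2$ is a $(q-1)$th power; since $\gcd(q^3-1,q^4-1)=q-1$ one may write $c_2=u^{q^3-1}$, and then $c_1=u^{q^2-1}$. This gives the identity
\[
L(x)+\delta x \;=\; u^{-1}\bigl(\tr_{\F_{q^4}/\F_q}(u^q x)+\delta' \cdot (u^q x)\bigr),\qquad \delta'=1+\delta\,u^{1-q},
\]
after first absorbing the constant term $c_0$ into $\delta$ (a normalization you should make at the outset; without it $c_0$ resurfaces in $Q_y$ even though it cancels in $\Phi_y$). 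Thus the degenerate case is literally a rescaled instance of Example~\ref{ex2}, where an explicit $y\in\F_{q^2}\setminus\F_q$ (chosen from $\delta'$) already gives $Q_y(\rad(Q_y))=\{0\}$. Carrying out this substitution is the missing ingredient in your proposal; with it the ``delicate cancellation'' becomes a one-line citation rather than a fresh verification. You should also note, as the paper does, that the case $c_3=1$ is representative and lower-degree $L$ are handled the same way.
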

\begin{proof} We continue with the arguments in the beginning of this section.
We only deal with the case $\deg(L)=q^3$ and the other cases can be handled  similarly. If $L(X)=X^{q^3}+ax^{q^2}+bx^q+cx$, then by replacing $\delta$ with $\delta+c$ we assume that $c=0$. In this case, $\tilde{L}(X)=b^{q^3}X^{q^3}+a^{q^2}X^{q^2}+X^q$ and $
\tilde{L}(yu)+L(u)y=(b^{q^3}y^{q^3}+y)u^{q^3}+(a^{q^2}y^{q^2}+ay)u^{q^2}+(y^q+by)u^q$.
The associated matrix of this $q$-polynomial as in \eqref{eqn_mat} has determinant $f(y)^2$, where
\[
f(y)=y^{1+q}c_1+y^{1+q^2}c_2+y^{q+q^2}c_1^q+y^{q^3+1}c_1^{q^3}+y^{q+q^3}c_2^q+y^{q^2+q^3}c_1^{q^2},
\]
with $c_1=b^q+a^{1+q}$, $c_2=1+b^{1+q^2}$.
\begin{enumerate}
\item If at least one of $c_1$, $c_2$ is not zero, then $f(Y)$ is a nonzero polynomial with degree less than $q^4$.  For $y\in \F_{q^4}$ such that $f(y)\ne 0$,  $\rad(Q_y)=\{0\}$ and $Q_y$ has rank $4$.
\item If $c_1=c_2=0$, then $b^{1+q^2}=1$ and $b^q=a^{1+q}$. It follows that $a^{(1+q)(1+q^2)}=1$, i.e., $a$ is a $(q-1)$-st power in $\F_{q^4}^*$. Since $\gcd(q^3-1,q^4-1)=q-1$, there exists $u\in \F_{q^4}^*$ such that $a=u^{q^3-1}$. Then  $b=a^{(1+q)q^2}=u^{q^2-1}$, and so $L(x)=u^{-1}\tr_{\F_{q^4}/\F_q}(u^qx)+u^{q-1}x$. With $x'=u^qx$ and $\delta'=1+\delta u^{1-q}$, we have
    \[
    L(x)+\delta x=u^{-1}(\tr_{\F_{q^4}/\F_q}(x')+\delta' x')
    \]
    This reduces to the case in Example \ref{ex2}.
\end{enumerate}
In either case,  $Q(X)$ is not a permutation polynomial of $\F_{q^4}$. This completes the proof.
\end{proof}

The rest of this section is devoted to the classification of the case $n=3$.
\begin{lemma}\label{lemma_even_3dim}
For  $\delta\in\F_{q^6}\setminus \F_{q^3}$,
the map $x\mapsto Q(x)=(\tr_{\F_{q^3}/\F_q}(x)+\delta x)^{1+q^3}$ is a permutation of $\F_{q^3}$ if and only if $\delta^{-1}+\delta^{-q^3}\in\F_q^*$.
\end{lemma}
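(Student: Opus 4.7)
The plan is to analyze directly when $Q(x) = Q(y)$ can have nontrivial solutions. Writing $t = \delta + \delta^{q^3}$ and $N = \delta^{q^3+1}$ (both in $\F_{q^3}$), one has $Q(x) = L(x)^2 + tL(x)x + Nx^2$ and the key identity $\delta^{-1} + \delta^{-q^3} = t/N$. Since $Q$ is of Dembowski-Ostrom type, a short characteristic-two computation gives, with $s = x + y$,
\[
Q(x) + Q(y) = Q(s) + t\bigl(L(x)\,s + L(s)\,x\bigr).
\]
Thus $Q$ is a permutation of $\F_{q^3}$ if and only if, for every $s \in \F_{q^3}^*$, the $\F_q$-linear equation $tL(s)\,x + ts\,L(x) = Q(s)$ has no solution $x \in \F_{q^3}$.

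Next I would split on whether $L(s)$ vanishes. If $s \in \ker L \setminus \{0\}$, the equation reduces to $Ns^2 = ts\,L(x)$, which is solvable if and only if $Ns/t \in \F_q$; call this condition (A). If $L(s) \neq 0$, the $\F_q$-linear endomorphism $M_s\colon x \mapsto tL(s)\,x + ts\,L(x)$ has kernel $\F_q\cdot s$, and decomposing any $x$ as $cs + r$ with $c \in \F_q$ and $r \in \ker L$ gives $M_s(x) = tL(s)\,r$, so the image is exactly $t\cdot \ker L$. Solvability is therefore equivalent to $\tr_{\F_{q^3}/\F_q}(Q(s)/t) = 0$, and using $\tr = L$ together with $L(x)^2 = L(x^2)$ this rearranges to $\tr(\beta\, s^2) = 0$, where
\[
\beta := 1 + \tr_{\F_{q^3}/\F_q}(1/t) + N/t \in \F_{q^3}.
\]
Call (B) the condition that $\tr(\beta s^2) \neq 0$ for every $s$ with $L(s) \neq 0$. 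Then $Q$ is a permutation iff both (A) (for every nonzero $s \in \ker L$) and (B) hold.

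For the forward direction, suppose $N/t \notin \F_q$. Then $\beta \notin \F_q$, and writing $\beta = \gamma^2$ (squaring is bijective on $\F_{q^3}$ in characteristic two) yields $\gamma \notin \F_q$ and $\tr(\beta s^2) = \tr(\gamma s)^2$. The hyperplane $\{s : \tr(\gamma s) = 0\}$ is $2$-dimensional over $\F_q$ and, since $\gamma \notin \F_q$, distinct from $\ker L = \{s : \tr(s) = 0\}$, so it contains some $s$ with $L(s) \neq 0$, violating (B). Hence $Q$ being a permutation forces $N/t \in \F_q^*$, i.e., $\delta^{-1} + \delta^{-q^3} = t/N \in \F_q^*$.

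Conversely, assume $c := t/N \in \F_q^*$, so $N/t = 1/c \in \F_q^*$. Condition (A) is immediate since $(N/t)s \in \F_q$ forces $s \in \F_q$, and $\ker L \cap \F_q = \{0\}$ in characteristic two. For (B) it suffices to show $\beta \neq 0$; since $\beta \in \F_q$ in this case, using $\tr(1/t) = (1/c)\tr(1/N)$ the equation $\beta = 0$ rearranges to $\tr_{\F_{q^3}/\F_q}(1/N) = 1 + c$. The crucial input is that $\delta \notin \F_{q^3}$ is equivalent to the irreducibility of $X^2 + tX + N$ over $\F_{q^3}$, which in characteristic two is the condition $\tr_{\F_{q^3}/\F_2}(N/t^2) = 1$; after substituting $t = cN$ and pulling $1/c^2 \in \F_q$ through the relative trace, this reads $\tr_{\F_q/\F_2}\bigl(\tr_{\F_{q^3}/\F_q}(1/N)/c^2\bigr) = 1$. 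If $\tr_{\F_{q^3}/\F_q}(1/N) = 1 + c$ held, the left-hand side would equal $\tr_{\F_q/\F_2}(1/c^2 + 1/c) = 2\,\tr_{\F_q/\F_2}(1/c) = 0$, using the identity $\tr_{\F_q/\F_2}(a^2) = \tr_{\F_q/\F_2}(a)$ in characteristic two, contradicting irreducibility. Therefore $\beta \neq 0$, condition (B) holds, and $Q$ is a permutation. The main technical subtlety to present cleanly is this final step, which ties the relative-trace equality $\beta = 0$ to the absolute-trace Artin-Schreier criterion for the irreducibility of the minimal polynomial of $\delta$.
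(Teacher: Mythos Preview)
Your argument is correct, modulo a small expository slip: you first define (A) as the \emph{solvability} condition $Ns/t\in\F_q$ for a given $s$, but then use the label (A) for the \emph{non}-solvability condition needed for $Q$ to be a permutation. The mathematics is fine once this is read as intended.

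Your route is genuinely different from the paper's. The paper invokes the general rank criterion (Lemma~\ref{lemma_perm_rank}): $Q$ is a permutation iff each quadratic form $Q_y(x)=\tr_{\F_{q^3}/\F_q}\big((\delta+\delta^{q^3})^{-1}yQ(x)\big)$ has odd rank, equivalently $Q_y$ is not identically zero on its radical. It then computes $\rad(Q_y)$ separately for $y\in\F_q^*$ and $y\notin\F_q$, and checks the radical condition in each case, with the key observation for sufficiency being that the hyperplane $\{x:\tr((y^{q-q^2}+y^{q^2-q})x)=0\}$ cannot contain $\F_q$. You instead work directly with $Q(x)+Q(y)=Q(s)+t(L(x)s+L(s)x)$ and split on whether $s=x+y$ lies in $\ker L$, reducing everything to the single scalar $\beta=1+\tr(1/t)+N/t$ and the Artin--Schreier criterion for the irreducibility of $X^2+tX+N$. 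Your approach is more self-contained and avoids the quadratic-form machinery; the paper's approach has the advantage of fitting into the uniform framework of Section~5 (the same $Q_y$-radical analysis is what drives Theorem~\ref{thm_even_4dim}). Both proofs ultimately hinge on the same absolute-trace obstruction $\tr_{\F_{q^3}/\F_2}(N/t^2)=1$, but package it differently.
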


\begin{proof}
Set $r:=1+\delta^{-1}$, $h:=(\delta^{-1}+\delta^{-q^3})^{-1}$, and set $\tr:= \tr_{\F_{q^3}/\F_q}$ throughout this proof.
By Lemma \ref{lemma_perm_rank} and Theorem \ref{thm_N0}, $Q(X)$ is a permutation polynomial of $\F_{q^3}$ if and only if $Q_y$ is not constantly zero on $\rad(Q_y)$ for each $y\ne 0$, where $Q_y$ is as defined in \eqref{eqn_Qy}. In this case,
\[
Q_y(x)=\tr((\delta+\delta^{q^3})^{-1}y)\cdot\tr(x^2)
+\tr(hyx^2)+\tr(yx)\cdot\tr(x).
\]
If $y\in \F_q^*$, then $\rad(Q_y)=\F_{q^3}$ and $Q_y(x)=y\tr(cx^2)$, with $c=h+1+\tr((\delta+\delta^{q^3})^{-1})$. If $y\not\in \F_q$, then $\rad(Q_y)=\{u:\,\tr(u)=\tr(yu)=0\}=\F_q\cdot (y^q+y^{q^2})$, and
\[
  Q_y(y^q+y^{q^2})=y^{1+q+q^2}\cdot\tr\left(h(y^{q-q^2}+y^{q^2-q})\right).
\]
If $h\not\in\F_q^*$, then $ Q_h(h^q+h^{q^2})=0$ and $Q_h(\rad(Q_h))=\{0\}$. Hence we must have $h\in \F_q^*$ in order for $Q$ to be a permutation polynomial, and this proves the necessity part.

Now assume that  $h\in \F_q^*$. In this case, it is straightforward to show that $c=\tr(r^{1+q^3})h$ and the minimal polynomial of $rh$ over $\F_{q^3}$ is $X^2+X+r^{1+q^3}h^2=0$. By \cite[Theorem 2.25]{ff}, $\tr_{\F_{q^3}/\F_2}(h^2r^{1+q^3})=1$, which in particular implies that $\tr(h^2r^{1+q^3})=hc\ne 0$. Hence $Q_y(x)=y\tr(cx^2)$ is not constantly zero on $\rad(Q_y)=\F_{q^3}$ if $y\in \F_q^*$.
For each $y\not\in \F_q$, the $\F_q$-linear subspace $\{x\in \F_{q^3}: \,\tr((y^{q-q^2}+y^{q^2-q})x)=0\}$ is spanned by $y$ and $y^{-1}$. It can not contain $\F_q$, since otherwise $y$ would lie in a degree two extension of $\F_q$. It follows that $\tr(y^{q-q^2}+y^{q^2-q})\ne 0$, so $Q_y(\rad(Q_y))\ne\{0\}$ for all $y\not\in \F_q$. This proves the sufficiency part.

\end{proof}

\begin{remark}\label{remark_3dim} Take $\delta\in\F_{q^6}\setminus\F_{q^3}$ such that $\delta^{-1}+\delta^{-q^3}\in\F_q^*$, and
take the decomposition  $\F_{q^3}=T_0\oplus \F_q$ as in \cite{kwnew}, where $T_0=\{x\in\F_{q^3}:\tr_{\F_{q^3}/\F_q}(x)=0\}$. Let $W$ be the image of $\F_{q^3}$ under the map $x\mapsto \tr_{\F_{q^3}/\F_q}(x)+\delta x$. Then  $W=T_0\cdot\delta\oplus\F_q\cdot(1+\delta)$, and $\{\beta^i\cdot W:\,0\le i\le q^3\}$ forms a spread $\cS$ of type $\cC$, where $\beta$ is an element of order $(q^3+1)(q-1)$. Moreover, the spread $\cS$ is symplectic with respect to the nondegenerate alternating form
$A(x,y)=\tr_{q^6/q}\big((\delta+\delta^{q^3})^{-1}xy^{q^3}\big)$.
\end{remark}

\begin{lemma}\label{thm_even_3dim}
Let $L(X)\in\F_{q^3}[X]$ be a monic reduced $q$-polynomial and $\delta\in\F_{q^6}\setminus\F_{q^3}$. The map $x\mapsto Q(x)=(L(x)+\delta x)^{1+q^3}$ is a permutation of $\F_{q^3}$ if and only if $L(x)+\delta x=u^{-1}\tr_{q^3/\F_q}(u^qx)+\delta' x$ for some $\delta'\not\in\F_{q^3}$ and $u\in\F_{q^3}^*$ such that $\delta'^{-1}+\delta'^{-q^3}\in u^{1-q}\cdot\F_q^*$.
\end{lemma}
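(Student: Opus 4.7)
The plan is to prove both directions. Sufficiency reduces to Lemma~\ref{lemma_even_3dim}; necessity requires a rank analysis via Lemma~\ref{lemma_perm_rank}, exploiting an identity that forces $\det M_y = 0$ identically in characteristic two.

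\textbf{Sufficiency.} Suppose $L(x) + \delta x = u^{-1}\tr_{q^3/q}(u^q x) + \delta' x$. The substitution $y = u^q x$ is a bijection of $\F_{q^3}$, and setting $\tilde\delta := \delta' u^{1-q}$ a direct computation using $u^{q^3} = u$ yields
\[
Q(x) = u^{-2}\bigl(\tr_{q^3/q}(y) + \tilde\delta y\bigr)^{1+q^3}.
\]
Since $u^{-2} \in \F_{q^3}^*$ acts bijectively, $Q$ is a permutation iff $y \mapsto (\tr(y) + \tilde\delta y)^{1+q^3}$ is. By Lemma~\ref{lemma_even_3dim} this holds iff $\tilde\delta^{-1} + \tilde\delta^{-q^3} \in \F_q^*$; using $(u^{1-q})^{q^3} = u^{1-q}$, this condition translates to $\delta'^{-1} + \delta'^{-q^3} \in u^{1-q}\F_q^*$.

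\textbf{Necessity.} Write $L(X) = d_0 X + d_1 X^q + d_2 X^{q^2}$, monic. Replacing $\delta$ by $\delta + d_0$ absorbs $d_0$, so I may take $d_0 = 0$. In characteristic two a direct computation gives
\[
R_y(u) := \tilde L(yu) + yL(u) = \alpha\, u^q + \beta\, u^{q^2},\quad \alpha = d_1 y + d_2^q y^q,\; \beta = d_2 y + d_1^{q^2} y^{q^2},
\]
(the $u$-coefficient vanishes because the two $d_0 yu$ contributions cancel). The key structural identity is $\beta = \alpha^{q^2}$, so $N(\alpha) = N(\beta)$ and $\det M_y = N(\alpha) + N(\beta) = 0$ identically. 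By Lemma~\ref{lemma_M_rank}, $\rad(Q_y)$ is always nontrivial, and by Lemma~\ref{lemma_perm_rank} the permutation hypothesis is equivalent to requiring $Q_y$ nonzero on $\rad(Q_y)$ for every $y \neq 0$.

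I then split on $d_2$. If $d_2 = 0$ (so $L = X^q$, or the companion case $d_1 = 0$, $L = X^{q^2}$), then $\rad(Q_y) = \F_q\cdot y^q$ (resp.\ $\F_q\cdot y^{q^2}$), and expanding $Q_y$ at this generator produces an explicit reduced polynomial in $y$ of degree less than $q^3$; the plan is to exhibit a zero in $\F_{q^3}^*$ via either a character-sum/Weil-type estimate on the exponent pattern $\{y^{2q^2+1},\,y^{2q+1},\,y^{q^2+q+1}\}$ or a direct analysis of the non-vanishing of coefficients, contradicting the permutation property. If $d_2 = 1$, then for generic $y$ the element $u_0 := \alpha^q = y^{q^2} + d_1^q y^q$ generates $\rad(Q_y)$; expanding $L(u_0) + \delta u_0 = y^q[(d_1^{q^2+1} + \delta)y^{q^2-q} + (1 + \delta d_1^q)]$ and taking the norm produces $Q(u_0) = y^{2q}(A_2 s^2 + A_1 s + A_0)$ with $s = y^{q^2-q}$, and a characteristic-two computation reduces the middle coefficient to
\[
A_1 = (\delta + \delta^{q^3})\bigl(N(d_1) + 1\bigr).
\]
If $N(d_1) \neq 1$, then $A_1 \neq 0$ contributes a nontrivial $(N(d_1)+1)N(y)$-term in $Q_y(u_0) = \tr((\delta+\delta^{q^3})^{-1}yQ(u_0))$, and the same type of character-sum argument forces a zero in $\F_{q^3}^*$, a contradiction. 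Hence $N(d_1) = 1$, which is equivalent to $d_1 = u^{q^2-1}$ for some $u \in \F_{q^3}^*$ (the image of $x \mapsto x^{q^2-1}$ being precisely the norm-one subgroup). Substituting back,
\[
L(X) + \delta X = u^{q^2-1}X^q + X^{q^2} + \delta X = u^{-1}\tr_{q^3/q}(u^qX) + (\delta + u^{q-1})X,
\]
the required form with $\delta' = \delta + u^{q-1} \notin \F_{q^3}$.

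The main obstacle is the character-sum / counting step in both subcases: one must show that $y \mapsto Q_y(u_0(y))$, explicitly an $\F_q$-valued reduced polynomial arising from the rigid exponent structure imposed by $\beta = \alpha^{q^2}$, cannot be nonvanishing on all of $\F_{q^3}^*$ when the algebraic obstruction ($d_2 = 0$ or $N(d_1) \neq 1$) is present. Given the small number of monomials involved and the trace-like shape, this should be tractable by a Weil bound adapted to the specific exponents, though some care is needed for small $q$ (e.g.\ $q = 2$) where direct enumeration may be required.
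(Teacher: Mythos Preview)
Your sufficiency argument is correct and matches the paper. Your algebraic setup for necessity is also sound: the identity $\beta=\alpha^{q^2}$ and the consequent vanishing of $\det M_y$ are correct, and your condition $N(d_1)=1$ is exactly the paper's condition that $a=d_1$ be a $(q-1)$-st power in $\F_{q^3}^*$ (both cut out the norm-one subgroup of order $q^2+q+1$).

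The gap is precisely where you flag it. You reduce the problem to showing that, when $d_2=0$ or when $N(d_1)\ne1$, the $\F_q$-valued function $y\mapsto Q_y(u_0(y))$ has a zero on $\F_{q^3}^*$, and you propose a Weil-type estimate. But this function, once expanded, involves monomials of degree up to $2q^2+1$ in $y$; a generic Weil bound gives error of order $q^{3/2}$ against a main term of order $q^2$, which is fine asymptotically but leaves all small $q$ unresolved, and the bound must be applied carefully since the expression is already $\F_q$-valued (a trace), not a raw polynomial. You have not shown that the relevant character sum is actually nontrivial (i.e., that the polynomial inside is not a $(q-1)$-st power up to constants), nor handled the constant term $N(y)$ that appears. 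As written this is a hope, not an argument.

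The paper sidesteps this entirely. It first observes that when $a$ is not a $(q-1)$-st power, the map $y\mapsto y^{q^2}+a^qy^q$ is a bijection, and uses it to reduce \emph{every} bad case to the single monomial case $Q(x)=(x^q+\delta x)^{1+q^3}$. It then kills that case by Hermite's criterion: one computes the coefficient of $X^{q^3-1}$ in $Q(X)^{q^2-1}\pmod{X^{q^3}-X}$, which after a Lucas-theorem analysis collapses to $s^{q^2+q-1}\tr_{\F_q/\F_2}(ts^{-2})$ with $s=\delta+\delta^{q^3}$, $t=\delta^{1+q^3}$. If $Q$ were a permutation this coefficient would vanish, forcing $\tr_{\F_{q^3}/\F_2}(ts^{-2})=0$; but then $X^2+(ts^{-2})^{-1}X+1$ (the minimal polynomial of $\delta^{q^3-1}$) would have a root in $\F_{q^3}$, contradicting $\delta\notin\F_{q^3}$. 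This is a closed-form algebraic contradiction valid for every even $q$, with no estimates and no small-$q$ exceptions. If you want to salvage your approach, you should either carry out the substitution reducing $N(d_1)\ne1$ to the monomial case (this is short) and then attack only $L(X)=X^q$, or replace the Weil heuristic by an exact algebraic identity for $Q_y(u_0(y))$.
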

\begin{proof} If $L(x)+\delta x=u^{-1}\tr_{q^3/\F_q}(u^qx)+\delta' x$, then with $y=u^qx$ we have $L(x)+\delta x=u^{-1}\left(\tr_{q^3/\F_q}(y)+\delta'u^{1-q} y\right)$, and the sufficient part of the theorem follows from Lemma \ref{lemma_even_3dim}.  Therefore, we only need to prove the necessary part.

Assume that $x\mapsto Q(x)$ is a a permutation, and we need to prove that $L(x)+\delta x$ is of the desired form. If $L(x)=x^{q^2}+ax^q+bx$, then by replacing $\delta$ with $\delta+b$ we assume that $b=0$.  If $a$ is a nonzero $(q-1)$-st power, then  $a=u^{q^2-1}$ for some $u\in\F_{q^3}^*$ and $L(x)+\delta x=u^{-1}\tr_{\F_{q^3}/\F_q}(u^qx)+(u^{q-1}+\delta)x$. Similar to the first paragraph of this proof, we can deduce that $L(x)+\delta x$ is of the desired form by Lemma \ref{lemma_even_3dim}. There are two remaining cases.
\begin{enumerate}
\item In the case $a$ is not a nonzero $(q-1)$-st power, $y\mapsto u=y^{q^2}+a^qy^q$ is a permutation of $\F_{q^3}$ and $L(u)+u\delta=(1+a^q\delta)^{1+q^3}(y^{q^2}+\delta'y^q)^{1+q^3}$, where $\delta'=\frac{\delta+a^{1+q^2}}{\delta a^q+1}$. Therefore, $x\mapsto Q(x)$ is a permutation if and only if the map $x\mapsto (x^{q}+\delta'x)^{1+q^3}$ is.
\item In the case $a=0$,  $x^{q^2}+\delta x=\delta\cdot(y^q+\delta^{-1}y)$ for $y=x^{q^2}$.  Therefore, $x\mapsto Q(x)$ is a permutation if and only if the map $x\mapsto (x^{q}+\delta^{-1}x)^{1+q^3}$ is.
\end{enumerate}
We now show that $x\mapsto Q(x)=(x^q+\delta x)^{1+q^3}$ is not a permutation of $\F_{q^6}$ for any $\delta\not\in\F_{q^3}$, which will exclude these two cases and conclude the proof. In this case, we have $Q(X)=(X^q+\delta X)(X^q+\delta^{q^3} X)$. By Hermite's criterion for permutation polynomials (cf. \cite[Theorem 7.4]{ff}), $Q(X)^{q^2-1}\pmod{X^{q^3}-X}$ has degree at most $q^3-2$. The polynomial $Q(X)^{q^2-1}$ has degree at most $2q(q^2-1)<2(q^3-1)$, so its coefficient of $X^{q^3-1}$ should be zero. Since $q^2-1=(q-1)q+(q-1)$, we can rewrite $Q(X)^{q^2-1}$ as
\[
X^{2q^2-2}\cdot
\left( X^{2(q^2-q)}+s^qX^{q^2-q}+t^q\right)^{q-1}\cdot
\left( X^{2(q-1)}+sX^{q-1}+t\right)^{q-1},
\]
where $s=\delta+\delta^{q^3}$ and $t=\delta^{1+q^3}$.
The second term in the product contributes monomials that are powers of $X^{q(q-1)}$ and the third term contributes monomials that are powers of $X^{q-1}$. If $q^3-1=2(q^2-1)+q(q-1)i+(q-1)j$ with $0\le i,j\le 2(q-1)$, then we necessarily have $j=q-1$ and $i=q-2$. Thus the coefficient of $X^{q^3-1}$ in $Q(X)^{q^2-1}$ is the product of the coefficient of $X^{q-1}$ in $(X^{2 }+sX +t)^{q-1}$  and that of $X^{ q-2 }$ in $(X^{2 }+s^qX +t^q)^{q-1}$, i.e.,
\begin{equation}\label{eqn_coef}
\sum_{i=0}^{q/2}\binom{q-1}{i,q-1-2i,i}s^{q-1-2i}t^i\cdot \sum_{j=0}^{q/2-1}\binom{q-1}{j,q-2-2j,j+1}s^{q(q-2-2j)}t^{q(j+1)}.
\end{equation}
Here, the numbers $\binom{q-1}{i,j,k}$'s are  trinomial coefficients. A straightforward analysis using Lucas' theorem shows that: $\binom{q-1}{i}$ is odd for all $0\le i\le q-2$; $\binom{2i}{i}$ is odd if and only if $i=0$; $\binom{2j+1}{j}$ is odd if and only if $j=2^\ell-1$ for some nonnegative integer $\ell$. Since $\binom{q-1}{i,q-1-2i,i}=\binom{2i}{i}\binom{q-1}{2i}$ and $\binom{q-1}{j,q-2-2j,j+1}=\binom{2j+1}{j}\binom{q-1}{2j+1}$, the quantity in \eqref{eqn_coef} is equal to
\[
s^{q-1}\cdot\sum_{\ell=0}^{e-1}s^{q(q-2^{\ell+1})}t^{2^\ell q}
=s^{q^2+q-1}\cdot\sum_{\ell=0}^{e-1} (ts^{-2})^{2^\ell q},
\]
where $e$ is such that $q=2^e$. Recall that this quantity is zero. Since $s\ne 0$, we  have $\sum_{\ell=0}^{e-1}(ts^{-2})^{2^\ell }=0$ and thus $\tr_{\F_{q^3}/\F_2} (ts^{-2})=0$. By \cite[Theorem 2.25]{ff}, there exists $u\in\F_{q^3}$ such that $ts^{-2}=u^2+u$. Set $z=\delta^{q^3-1}$. We have $z\not\in\F_{q^3}$ and $ts^{-2}=(z+z^{-1})^{-1}$ by direct check. The minimal polynomial of $z$ over $\F_{q^3}$ is $(X-z)(X-z^{-1})=X^2+(u^2+u)^{-1}X+1$, but the latter has $\frac{u}{u+1}\in\F_{q^3}$ as a root: a contradiction. This completes the proof.
\end{proof}
As an immediate corollary, we get the following characterization result.
\begin{thm}
Let $q$ be even. The type $\cC$ spreads with ambient space $(\F_{q^6},+)$ and kernel $\F_q$ are isomorphic to those described in Remark \ref{remark_3dim}.
\end{thm}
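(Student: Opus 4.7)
The plan is to deduce the theorem directly from the polynomial characterization in Lemma~\ref{thm_even_3dim}, by exhibiting an explicit element of $\Gamma L(1,q^6)$ that transports an arbitrary type $\cC$ component into the normal form of Remark~\ref{remark_3dim}. Concretely, I would begin with an arbitrary type $\cC$ spread $\cS$ of $(\F_{q^6},+)$ with kernel $\F_q$. By the reduction explained at the start of Section~5 (which in turn rests on the discussion preceding Lemma~\ref{lemma_key}), $\cS$ is isomorphic to the orbit of some $W=\{L(x)+\delta x:x\in\F_{q^3}\}$ under $\la\Theta(\beta)\ra$, where $L(X)$ is a monic reduced $q$-polynomial and $\delta\in\F_{q^6}\setminus\F_{q^3}$; by Lemma~\ref{lemma_key}(3) the map $Q(x)=(L(x)+\delta x)^{1+q^3}$ must be a permutation of $\F_{q^3}$.

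Next I would apply Lemma~\ref{thm_even_3dim} to deduce that
\[
L(x)+\delta x=u^{-1}\tr_{\F_{q^3}/\F_q}(u^{q}x)+\delta' x
\]
for some $u\in\F_{q^3}^*$ and $\delta'\in\F_{q^6}\setminus\F_{q^3}$ satisfying $\delta'^{-1}+\delta'^{-q^3}\in u^{1-q}\cdot\F_q^*$. Now I would apply the collineation $\Theta(u)\in\Gamma L(1,q^6)$ to the spread. Since $\Theta(u)$ commutes with every $\Theta(\beta^i)$, the image $\Theta(u)(\cS)$ is the $\la\Theta(\beta)\ra$-orbit of $u\cdot W$; and writing $y=u^{q}x$ (a bijection of $\F_{q^3}$) gives
\[
u\cdot W=\bigl\{\tr_{\F_{q^3}/\F_q}(y)+\delta'' y:y\in\F_{q^3}\bigr\},\qquad \delta'':=u^{1-q}\delta'.
\]
This is exactly the shape of the component in Remark~\ref{remark_3dim}, provided $\delta''$ satisfies the required normalization.

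It remains to verify the parameter condition: using $u^{q^3}=u$ one computes $\delta''^{-q^3}=u^{q-1}\delta'^{-q^3}$, so
\[
\delta''^{-1}+\delta''^{-q^3}=u^{q-1}\bigl(\delta'^{-1}+\delta'^{-q^3}\bigr)\in u^{q-1}\cdot u^{1-q}\cdot\F_q^*=\F_q^*,
\]
and $\delta''\notin\F_{q^3}$ because $u^{1-q}\in\F_{q^3}^*$ and $\delta'\notin\F_{q^3}$. Thus $\Theta(u)(\cS)$ is one of the spreads of Remark~\ref{remark_3dim}, finishing the proof. The only step that requires any real work has already been carried out: namely, Lemma~\ref{thm_even_3dim}. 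Once that lemma is in hand, the proof is a clean change-of-variable; the only thing one has to be careful about is that the isotopy used (multiplication by $u\in\F_{q^3}^*\subset\F_{q^6}^*$) centralizes the Singer subgroup $\la\Theta(\beta)\ra$, which guarantees that applying $\Theta(u)$ to a single component $W$ transports the entire spread.
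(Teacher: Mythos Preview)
Your proof is correct and follows the same approach as the paper, which simply states the theorem as an immediate corollary of Lemma~\ref{thm_even_3dim}. You have in fact spelled out the isomorphism $\Theta(u)$ and the change of variable $y=u^qx$ that the paper already uses (without comment) in the first paragraph of the proof of Lemma~\ref{thm_even_3dim}; your verification that $\delta''^{-1}+\delta''^{-q^3}\in\F_q^*$ is exactly the computation implicit there.
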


We end this section with some remarks on the higher dimensional case. Theorem \ref{thm_even_4dim} supports the conjecture that there is no  $\cC$-plane of even order and even dimension.  The simple nature of the proof in the case $n=4$ suggests that the method may be applicable to larger values of even $n$, and of course new ingredients are needed to prove the conjecture. In the case where $n$ is odd, there are the constructions in \cite{kwnew}, making a characterization in this case a challenging problem. It may be more practical to classify the $\cC$-planes of order $2^p$ with $p$ an odd prime, where no such non-Desargeusian planes are known. The approach in this section provides the first step towards a complete characterization of $\cC$-planes of even order.\\

\noindent{\bf Acknowledgement.} This research was supported by the National Natural Science Foundation of China under Grant 11422112 and Fundamental Research Fund for the Central Universities of China. The author thanks the referees for detailed comments and suggestions that helped to improve the presentation of the paper. The author is indebted to Professor William M. Kantor for numerous helpful comments and suggestions during this project, and he also thanks Professor Gary L. Ebert who brought the problem to his attention in 2011.


\begin{thebibliography}{99}

\bibitem{albert_tw}A.A. Albert, On nonassociative division algebra, Trans. Amer. Math. Soc. 72 (1952), 296-309.

\bibitem{albert_iso1}A.A. Albert, Generalized twisted fields, Pacific J. Math. 11(1961), 1-8.
\bibitem{albert_iso2}A.A. Albert, Isotopy for generalized twisted fields, An. Acad. Brasil. Ci. 33 (1961), 265-275.

\bibitem{andre_tr}J. Andre, Uber nicht-Desarguesche Ebenen mit transitiver Translationsgruppe, Math. Z. 60 (1954), 156-186.

\bibitem{be_last}R.D. Baker, C. Culbert, G.L. Ebert, K.E. Mellinger, Odd order flag-transitive affine planes of dimension three over their kernel, Adv. Geometry (2003), 215-223.

\bibitem{be_baer2} R.D. Baker, J.M. Dover, G.L. Ebert, K.L. Wantz, Perfect Baer subplane partitions and three-dimensional flag-transitive planes, Des. Codes Cryptogr. 21 (2000), 19-39.

\bibitem{be_baer}R.D. Baker, J.M. Dover, G.L. Ebert, K. L. Wantz, Baer subgeometry partitions, J. Geom. 67 (2000), 23-34.

\bibitem{be_const}R.D. Baker, G.L. Ebert, Constructions of two-dimensional flag-transitive planes, Geom. Dedicata 27 (1988), 9-14.

\bibitem{be_2dim}R.D. Baker, G.L. Ebert, Two-dimensional flag-transitive planes revisited, Geom. Dedicata 63 (1996), 1-15.

\bibitem{be_x}R.D. Baker, G.L. Ebert, K.H. Leung, Q. Xiang, A trace conjecture and flag-transitive affine planes, J. Combin. Theory Ser. A 95 (2001), 158-168.

\bibitem{rtcs_sur}S. Ball, M. Lavrauw, Commutative semifields of rank $2$ over their middle nucleus, in: G.L. Mullen et al. (eds), {\it Finite Fields with Applications to Coding Theory, Cryptography and Related Areas}, Springer-Verlag, Berlin-Heidelberg, 2002.

\bibitem{helle_sur}L. Budaghyan, T. Helleseth, Planar functions and commutative semifields, Tatra Mt. Math. Publ. 45 (2010), 15-25.

\bibitem{ls}F. Buekenhout, A. Delandtsheer, J. Doyen, P. Kleidman, M. Liebeck, J. Saxl, Linear spaces with flag-transitive automorphism groups, Geom. Dedicata 36 (1990), 89-94.

\bibitem{rtcs_char1}A. Blokhuis, M. Lavrauw, S. Ball, On the classification of semifield flocks, Adv. Math. 180 (2003), 104-111.

\bibitem{CG}S.D. Cohen, M.J. Ganley, Commutative semifields, two dimensional over their middle nucleus, J. Algebra 75 (1982), 373-385.


\bibitem{coulter}R.S. Coulter, M. Henderson, Commutative presemifields and semifields. Adv. Math. 217 (2008), 282-304.

\bibitem{ch_planar}R.S. Coulter, M. Henderson, On a conjecture on planar polynomials of the form $X(Tr_n(X)-uX)$, Finite Fields Appl.  21  (2013), 30-34.

\bibitem{chk}R.S. Coulter, M. Henderson, P. Kosick, Planar polynomials for commutative semifields with specified nuclei, Des. Codes Cryptogr. 44 (2007), 275-286.

\bibitem{nearf9}P. Dembowski, {\it Finite geometries}, Springer, Berlin-Heidelberger-New York, 1968.

\bibitem{dickson_2dim}L.E. Dickson, On commutative linear algebras in which division is always uniquely possible, Trans. Amer. math. Soc. 7 (1906), 514-522.

\bibitem{dickson_r2}L.E. Dickson, Linear algebra with associativity not assumed, Duke Math. J. 1 (1935), 113-125.


\bibitem{ebert_sur}G.L. Ebert, Partitioning problems and flag-transitive planes, Rend. Circ. Mat. Palermo Ser. II Suppl. 53 (1998), 27-44.

\bibitem{hering}C. Hering, Eine nicht-desarguesche zweifach transitive affine Ebene der Ordnung $27$, Abh. Math. Sem. Univ. Hamburg 34 (1969), 203-208.

\bibitem{foulser}D.A. Foulser, Solvable flag transitive affine groups, Math. Z. 86 (1964), 191-204.

\bibitem{foulser0}D.A. Foulser, The flag-transitive collineation groups of the finite Desarguesian affine planes, Canad. J. Math. 16 (1964), 443-472.

\bibitem{handbook}N.L. Johnson, V. Jha, M. Biliotti, {\it Handbook of Finite Translation Planes}, Pure Appl. Math. (Boca Raton), vol. 289, Chapman $\&$ Hall/CRC, Boca Raton, FL, 2007.


\bibitem{kantor_even}W.M. Kantor, Spreads, translation planes and Kerdock sets, I, SIAM J. Alg. Disc. Methods 3 (1982), 151-165.

\bibitem{kantor_proj}W. Kantor, Primitive permutation groups of odd degree, and an application to finite projective planes, J. Algebra 106 (1987), 15-45.


\bibitem{kantor_odd} W.M. Kantor, Two families of flag-transitive affine planes, Geom. Dedicata 41 (1992), 191-200.

\bibitem{kantor_h}W.M. Kantor, $2$-transitive and flag-transitive designs, in: D. Jungnickel, S.A. Vanstone (Eds), {\it Coding Theory, Design Theory, Group Theory},  Wiley, New York, 1993, pp. 13-30.


\bibitem{ks}W. M. Kantor, C. Suetake, A note on some flag-transitive affine planes, J. Combin. Theory Ser. A 65 (1994), 307-310.


\bibitem{kwnew}W.M. Kantor, M.E. Williams, New Flag-Transitive Affine Planes of Even Order, J. Combin. Theory Ser. A 74 (1996), 1-13.


\bibitem{rtcs_char2}M. Lavrauw, Sublines of prime order contained in the set of internal points of  a conic, Des. Codes Cryptogr. 38 (2006), 113-123.

\bibitem{fs_cur}M. Lavrauw, O. Polverino, Finite Semifields, in: L. Storme, J.D. Beule (Eds), {\it Current Research Topics in Galois Geometry}, 2011, pp. 127-155.


\bibitem{ff}R. Lidl, H. Niederreiter, {\it Finite Fields}, 2nd ed., Cambridge University Press, Cambridge, 1997.

\bibitem{luneburg}H. Luneburg, $\ddot{U}$ber projektive Ebenen in denen jede Fahne von einer nicht-trivialen Elation invariant gelassen wird, Abh. Math. Sem. Univ. Hamburg  29 (1965), 37-76.

\bibitem{meni1}G. Menichetti, On a Kaplansky conjecture concerning  three dimensional division algebras over a finite field, J. Algebra 47 (1977), 400-410.

\bibitem{meni2}G. Menichetti, $n$-Dimensional algebras over a field with a cyclic extension of degree $n$, Geom. Dedicata 63 (1996) 69-94.

\bibitem{naka}K. Minami, N. Nakagawa, On planar functions of elementary abelian $p$-group type, Hokkaido Math. J. 37 (2008), 531-544.

\bibitem{handbookff}G.L. Mullen, D. Panario, {\it Handbook of finite fields}, CRC Press, 2013.

\bibitem{rtcs_pw}T. Penttila, B. Williams, Ovoids of parabolic spaces, Geom. Dedicata 82 (2000), 1-19.


\bibitem{prince}A.R. Prince, Flag-transitive affine planes of order at most 125, J. Geom.  67  (2000), 208-216.

\bibitem{suetake}C. Suetake, Flag transitive planes of order $q^n$ with a long cycle $\ell_\infty$ as a collineation, Graphs Combin. 7 (1991), 183-195.

\bibitem{suetake2}C. Suetake, On flag-transitive affine planes of order $q^3$, Geom. Dedicata 51 (1994), 123-131.

\bibitem{Kthas_proj}K. Thas, Finite flag-transitive projective planes: a survey and some remarks, Discrete Math. 266 (2003), 417-429.

\bibitem{tz_proj}K. Thas, D. Zagier, Finite projective planes, Fermat curves, and Gaussian periods, J. Eur. Math. Soc. 10 (2008), 173-190.


\bibitem{wagner}A. Wagner, On finite affine line transitive planes, Math. Z. 87 (1965), 1-11.

\bibitem{weng}G. Weng, X. Zeng, Further results on planar DO functions and commutative
semifields, Des. Codes Cryptogr. 63 (2012), 413-423.

\bibitem{cs_zp}Y. Zhou, A. Pott, A new family of semifields with $2$ parameters, Adv.  Math. 234 (2011), 43-60.

\end{thebibliography}
\end{document}